\newcommand{\CR}{\mathcal R}
\newcommand{\F}{\mathcal F}
\newcommand{\A}{\mathcal A}
\newcommand{\M}{\mathcal M}
\newcommand{\Nc}{\mathcal N}
\newcommand{\N}{\mathbb N}
\newcommand{\VV}{\mathcal V}
\newcommand{\Pro}{\mathbb P}
\newcommand{\Z}{\mathbb Z}
\newcommand{\C}{\mathbb C}
\newcommand{\h}{\mathfrak h}
\newcommand{\X}{\mathcal{X}}
\newcommand{\Sc}{\mathcal{S}}
\newcommand{\CC}{\mathcal C}
\newcommand{\Msigma}{\Mod(S_{2g-1},\sigma)}
\newcommand{\Mbeta}{\Mod(S_g,[\beta])}
\def \Hs #1#2{H_#1(S_{#2},\Z)}
\DeclareMathOperator{\Ker}{Ker}
\DeclareMathOperator{\Sp}{Sp}
\DeclareMathOperator{\Fix}{Fix}
\DeclareMathOperator{\Jac}{Jac}
\DeclareMathOperator{\Homeo}{Homeo}
\DeclareMathOperator{\Diff}{Diff}
\DeclareMathOperator{\Ab}{Ab}
\DeclareMathOperator{\Mod}{Mod}
\DeclareMathOperator{\teich}{Teich}
\DeclareMathOperator{\PGL}{PGL}
\DeclareMathOperator{\SL}{SL}
\DeclareMathOperator{\SP}{Sp}
\DeclareMathOperator{\orb}{orb}
\DeclareMathOperator{\GL}{GL}
\DeclareMathOperator{\Id}{Id}
\DeclareMathOperator{\Prym}{Prym}
\DeclareMathOperator{\PSp}{PSp}
\DeclareMathOperator{\Ima}{Im}
\DeclareMathOperator{\Stab}{Stab}
\DeclareMathOperator{\Mor}{Mor}
\declaretheorem[name = Theorem, refname = {theorem, theorems}, Refname = {Theorem, Theorems}, numberwithin=section]{theo}
\declaretheorem[name = Lemma, refname = {lemma, lemmas}, Refname = {Lemma, Lemmas}, numberwithin=section]{lemma}
\declaretheorem[name = Corollary, refname = {corollary, corollaries}, Refname = {Corollary, Corollaries},
sibling=theo]{cor}
\declaretheorem[name = Question, refname = {Question, Questions}, Refname = {Question, Questions}, numberwithin=section]{question}
\DeclarePairedDelimiter\dprod{\langle}{\rangle}
\theoremstyle{definition}
\newtheorem{defn}{Definition}[section]
\newtheorem{remark}{Remark}[section]
\title{On the uniqueness of the Prym map}
\author{Carlos A. Serv\'an}
\address{Department of Mathematics\\}
\address{University of Chicago\\}
\email{\mbox{}cmarceloservan@uchicago.edu}
\begin{document}

\maketitle
\begin{abstract}
  The classical Prym construction associates to a smooth, genus $g$ complex curve $X$ equipped with
  a nonzero cohomology class $\theta \in H^1(X,\Z/2\Z)$, a principally polarized abelian variety (PPAV)
  $\Prym(X,\theta)$. Denote the moduli space of pairs $(X,\theta)$ by $\CR_g$, and let $\A_h$ be the
  moduli space of PPAVs of dimension $h$. The Prym construction globalizes to a holomorphic map of complex orbifolds
  $\Prym: \CR_g \to \A_{g-1}$. For $g\geq 4$ and $h \leq g-1$, we show that $\Prym$ is the unique nonconstant holomorphic map of complex orbifolds $F:\CR_g \to \A_h$.
  This solves a conjecture of Farb.
  A main component in our proof is a classification of homomorphisms $\pi_1^{\orb}(\CR_g) \to \SP(2h,\Z)$ for $h \leq g-1$.
  This is achieved using arguments from geometric group theory and low-dimensional topology.
\end{abstract}
\section{Introduction}
Let $X$ be a compact, connected, smooth, genus $g$ complex curve, and let $\Omega^1(X)$ be the space of holomorphic $1$-forms on $X$.
The \emph{Jacobian} of $X$,
\[ \Jac(X) := \frac{\Omega^1(X)^\vee}{H_1(X,\Z)}\]
is a $g$-dimensional principally polarized
abelian variety~(PPAV) canonically associated to $X$.

Let $\M_g$ be the moduli space of complex smooth genus $g$ curves, and let $\A_g$ be the moduli
space of PPAVs of dimension $g$. The Jacobian induces a holomorphic map, the \emph{Torelli map}
\[ J:\M_g \to \A_g \ \ , \ \ X \mapsto \Jac(X).\]

In a recent paper~\cite{farb2021global}, Farb showed that if $g \geq 3$ and $h \leq g$ then $J$ is the
\emph{unique} nonconstant holomorphic map of complex orbifolds $\M_g \to \A_h$.
In particular, extra data needs to be attached to smooth curves of genus $g$ in order to
associate, in a way that respects orbifold structures, a PPAV of dimension less than $g$ to each such curve. An example
of such a construction has been known to exist since over 100 years~\cite{farkas2011prym}, as we now explain.

 \medskip

 \noindent\textbf{The Prym construction.} Prym varieties~\cite{farkas2011prym}, named as such by Mumford in honor of Friedrich Prym
 (1841-1915), provide
a classical example of a way to obtain PPAVs of dimension $g-1$ from smooth curves of genus $g$.
Any nonzero $\theta \in H^1(X,\Z/2\Z)$ defines an unbranched double cover
\[ p:Y \to X, \]
with deck transform $\sigma$, and where $Y$ is a curve of genus $2g-1$. The map $p$ induces a map $\Jac(p):\Jac(Y) \to \Jac(X)$, between the jacobians of both curves.

The \emph{Prym variety} associated to $(X,\theta)$ is defined as (for more details and a explicit description see \Cref{subsubsec:prym_map})
\begin{equation} \Prym(X,\theta) := \ker \Jac(p) \in \A_{g-1} .\end{equation}

% In fact, if we restrict ourselves to cyclic (possibly branched) covers of $X$, Mumford showed that
% the Prym construction is the unique way to get a PPAV of dimension less than $g$.
\medskip

\noindent\textbf{Moduli space of Prym varieties.} The Prym construction globalizes as follows.
Let
\begin{equation*}
  \begin{aligned}
    \CR_g := \{(X,\theta_X) : X &\mbox{ smooth complex curve of genus $g$,} \\
    &\mbox{ and } \theta_X \in H^1(X,\Z/2\Z)^* \}/\sim
  \end{aligned}
  \end{equation*}
be the space
of equivalence classes of pairs $(X,\theta_X)$, where $(X_1,\theta_1) \sim (X_2,\theta_2)$ if there exists a
biholomorphism $f:X_1 \to X_2$, with $f^*(\theta_2) = \theta_1$. As we explain in more detail
below in this introduction, $\CR_g$ is a complex orbifold
%(warning: there are two closely related orbifold structures on
%$\CR_g$, for the details see \Cref{sec:orbi_str})
and the Prym construction globalizes to a map of complex orbifolds
\[ \Prym: \CR_g \to \A_{g-1} \ \ , \ \ (X,\theta_X) \mapsto \Prym(X,\theta_X).\]

Our main result shows that, as conjectured by Farb in~\cite{farb2021global}, $\Prym$ is \emph{holomorphically rigid}.

\begin{restatable}[\bf{Rigidity of $\Prym$}]{theo}{prymrigid}\label{theo:prym_rigid}
    Let $g \geq 4$ and let $h \leq g-1$. Let $F:\CR_g \to \A_h$ be a nonconstant holomorphic map of complex orbifolds\footnote{See \Cref{defn:orbi}}.
   Then $h = g-1$ and $F = \Prym$.
 \end{restatable}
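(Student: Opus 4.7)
My strategy mirrors Farb's treatment of the Jacobian map in two stages: first an algebraic classification of representations of $\pi_1^{\orb}(\CR_g)$ into symplectic groups, then a holomorphic rigidity step that identifies a holomorphic map from its induced monodromy. Since $\A_h$ is an orbifold $K(\Sp(2h,\Z),1)$ whose orbifold universal cover is the contractible bounded symmetric domain $\Hi_h$, a holomorphic map of complex orbifolds $F:\CR_g \to \A_h$ induces, up to conjugation, a homomorphism $F_*:\pi_1^{\orb}(\CR_g) \to \Sp(2h,\Z)$; similarly the Prym construction yields a distinguished homomorphism $\Prym_*$. The plan is to classify all such $F_*$ for $h \le g-1$ and then promote the algebraic coincidence $F_* = \Prym_*$ to the geometric equality $F = \Prym$.

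The main step is the classification announced in the abstract: for $g\ge 4$ and $h \le g-1$, every homomorphism $\rho: \pi_1^{\orb}(\CR_g) \to \Sp(2h,\Z)$ has finite image, unless $h = g-1$ and $\rho$ is conjugate to $\Prym_*$. I expect this to be the principal obstacle. The group $\pi_1^{\orb}(\CR_g)$ is (up to finite index) the stabilizer in $\Mod(S_g)$ of $\theta \in H^1(S_g,\Z/2\Z)^*$, and via the double cover $Y \to X$ it is closely related to the centralizer $\Msigma$ of the deck involution in $\Mod(S_{2g-1})$. The plan is to exploit a Dehn twist generating set together with the lantern, chain, and braid relations to constrain each $\rho(\tau)$ (for instance, forcing it to be a symplectic transvection or unipotent of bounded rank), and then to leverage the arithmeticity of $\Sp(2g,\Z)$ and $\Sp(2g-2,\Z)$ and the structure of the Johnson--Torelli filtration to rule out every non-standard symplectic representation of rank at most $2(g-1)$. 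The novelty relative to the $\Mod(S_g)$ classification is that $\pi_1^{\orb}(\CR_g)$ genuinely admits a new representation, namely $\Prym_*$, coming from the $\sigma$-anti-invariant summand of $H_1(Y,\Z)$; the argument must both produce this representation and prove its uniqueness in the allowed rank range.

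Given the classification, the theorem follows in two cases. If $F_*$ has finite image, I pass to a finite orbifold cover $\widetilde{\CR}_g$ on which $F_*$ becomes trivial; the lifted $\widetilde F$ factors through a holomorphic map $\widetilde{\CR}_g \to \Hi_h$. Since $\widetilde{\CR}_g$ is smooth quasi-projective and $\Hi_h$ is biholomorphic to a bounded domain, Riemann's removable singularity theorem applied to a smooth normal-crossings compactification yields a holomorphic extension to a compact complex manifold, which must then be constant by the maximum principle; hence $F$ itself is constant, contradicting the hypothesis of non-constancy. In the remaining case $h = g-1$ and $F_* = \Prym_*$ (up to conjugation, which I absorb by postcomposing with an automorphism of $\A_{g-1}$ induced by $\Sp(2g-2,\Z)$), both $F$ and $\Prym$ correspond to polarized variations of Hodge structure over $\CR_g$ with the same monodromy representation, which is Zariski-dense in $\Sp(2g-2,\R)$. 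Uniqueness of equivariant pluriharmonic maps into the Siegel domain with Zariski-dense monodromy (in the spirit of Corlette--Simpson non-abelian Hodge theory, or Siu--Sampson rigidity) then forces the equivariant lifts $\tilde F, \widetilde{\Prym}:\Teich \to \Hi_{g-1}$ to agree, yielding $F = \Prym$.
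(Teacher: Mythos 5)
The decisive gap is your final step. For $h=g-1$ you propose to conclude $\tilde F=\widetilde{\Prym}$ from ``uniqueness of equivariant pluriharmonic maps into the Siegel domain with Zariski-dense monodromy,'' but no such off-the-shelf theorem applies here: the base is a \emph{noncompact} quasi-projective orbifold, the lifts to $\h_{g-1}$ carry no a priori finite-energy bound, and the Corlette/Simpson and Siu--Sampson uniqueness statements you invoke require compactness or energy control (and even then give uniqueness only up to the centralizer/flat ambiguity). More fundamentally, the assertion you need -- that two weight-one families over the same quasi-projective base with the same monodromy coincide -- is exactly the kind of rigidity statement that is \emph{false} without extra hypotheses (non-rigid families of abelian varieties exist), which is why both Farb's proof and this paper do not stop at ``same monodromy, hence homotopic.'' The paper's route is: homotope $F$ to $\Prym$ (straight-line homotopy in $\h_{g-1}$), pass to the level-$\psi$ cover $\CR_g[\psi]$ to kill orbifold issues, produce by Bertini/Lefschetz a curve $C\subset\CR_g[\psi]$ with $\pi_1(C)\twoheadrightarrow\Gamma_g[\psi]$, verify that $C$ is non-compact and meets a boundary stratum whose local monodromy is a power of a transvection (infinite order) while the global monodromy surjects onto $\Sp(2(g-1),\Z)[L]$ (irreducibility), apply Saito's rigidity criterion over $C$, and only then propagate the equality $F=\Prym$ from $C$ to all of $\CR_g[\psi]$ via Borel--Narasimhan. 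If you want to keep a nonabelian-Hodge flavor, you would still have to verify hypotheses controlling the behavior at infinity; that verification is the content of the rigid-curve step and is not subsumed by Zariski-density of the monodromy.

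Two further points. First, the classification you take as input is coarser than what is actually true: for $\pi_1^{\orb}(\CR_g)=\Msigma$ the symplectic representations of rank $2(g-1)$ are only conjugate to $\Prym_*$ up to a character twist $\chi$ and up to conjugation in $\Delta(2(g-1),\Z)$, the group preserving the symplectic form \emph{up to sign}. The sign case corresponds to an antiholomorphic candidate period map and cannot be ``absorbed by postcomposing with an automorphism of $\A_{g-1}$ induced by $\Sp(2g-2,\Z)$''; the paper excludes it by a separate Stokes-theorem argument on a closed curve in $\CR_g$. Second, your sketch of the classification itself (lantern/braid relations, Johnson--Torelli filtration, arithmeticity) is only a plan, and the subtle point it must confront is that $\widehat{\Prym}_*$ does \emph{not} lift to $\Mbeta$ -- the relevant group really is $\Msigma$, and the paper's argument runs through the conjugacy classification of twists in $\Mbeta$, connectivity of the modified curve complex $\Nc_1(S_g)$, chain relations, restriction to subsurface mapping class groups via Franks--Handel/Korkmaz, and an explicit finite generating set, rather than the tools you list.
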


 The proof of \Cref{theo:prym_rigid} uses in a fundamental way that $g \geq 4$. I do not know if the statement holds true also for $g = 2,3$.

 \medskip
 \noindent\textbf{Two orbifold structures on $\CR_g$.} There exist two natural orbifold structures on $\CR_g$, which
 give very different results with respect to maps to $\A_h$ (see \Cref{theo:triviality_rg}). Here we provide a brief description of the two orbifold structures and refer to \Cref{sec:orbi_str} for
 the details.

 Let $S_g$ be a closed surface of genus $g$, let $[\beta] \in
 H_1(S_g,\Z/2\Z)^*$, and let $p:S_{2g-1} \to S_g$
 be the
 associated double cover with deck transform $\sigma$. Let $\Mod(S_g)$ be the mapping class group of $S_g$, and define
 \[ \Mod(S_g,[\beta]):= \Stab_{\Mod(S_g)}([\beta]) \]
 as the stabilizer of $[\beta]$, with respect to the action of $\Mod(S_g)$ on $H_1(S_g,\Z/2\Z)$. Similarly, define
 \[ \Msigma := C_{\Mod(S_{2g-1})}([\sigma]) \]
 as the centralizer of $[\sigma]$.

Both $\Mbeta$ and $\Msigma$ act on Teichm\"uller space $\teich(S_g)$, and the two orbifold structures on $\CR_g$ come from considering
$\Mbeta$ or $\Msigma$ as the orbifold fundamental group of $\CR_g$. If we consider $\pi_1^{\orb}(\CR_g) = \Msigma$, then \emph{every} point of
$\CR_g$ is an orbifold point of order at least $2$. This phenomenon is
  akin to both $\Sp(2g,\Z)$ and $\PSp(2g,\Z)$ acting on Siegel upper half-space $\h_g$ and giving the same quotient $\A_g$, but
  different orbifold structures on $\A_g$. Unless otherwise specified,
  we always consider $\A_g$ with the orbifold structure given
  by $\Sp(2g,\Z)$.

  The difference
  between  these two orbifold structures on $\CR_g$ seems to be elided in the literature, yet as the following
  theorem shows, the inclusion of
  the involution $\sigma$ is fundamental to our
  results. Let $\hat{\CR}_g$ denote the orbifold structure on $\CR_g$ with $\pi_1^{\orb}(\hat{\CR}_g) = \Mbeta$.

  \begin{theo}\label{theo:triviality_rg}
    Fix $g \geq 4$ and $h \leq g-1$. Then, any holomorphic map $F: \hat{\CR}_g \to \A_h$ of complex orbifolds
    is constant.
  \end{theo}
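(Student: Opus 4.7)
The plan is to reduce the statement to a classification of representations of $\Mbeta = \pi_1^{\orb}(\hat{\CR}_g)$, and then conclude via a standard finite-monodromy argument. Since $F$ is a holomorphic map of complex orbifolds, it induces a homomorphism on orbifold fundamental groups
\[ F_* : \Mbeta \longrightarrow \Sp(2h,\Z) = \pi_1^{\orb}(\A_h), \]
and lifts to an $F_*$-equivariant holomorphic map $\tilde F : \teich(S_g) \to \h_h$ between universal orbifold covers.

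The central step, which I expect to be the main obstacle, is to prove that for $g \geq 4$ and $h \leq g-1$, every homomorphism $\rho : \Mbeta \to \Sp(2h,\Z)$ has finite image. Since the action of $\Mod(S_g)$ on $H^1(S_g,\Z/2\Z)$ factors through the transitive action of $\Sp(2g,\Z/2\Z)$ on nonzero vectors, $\Mbeta$ is a finite-index subgroup of $\Mod(S_g)$ of index $2^{2g} - 1$; the classification thus reduces to one of homomorphisms from a finite-index subgroup of $\Mod(S_g)$ into $\Sp(2h,\Z)$, expected to be amenable to the same geometric-group-theory techniques promised for the $\Msigma$ case. The key geometric input distinguishing the present theorem from \Cref{theo:prym_rigid} is the absence of the deck involution $\sigma$: the nontrivial representations of $\Msigma$ that yield the Prym map arise precisely from the $\sigma$-anti-invariant part of $H_1(S_{2g-1},\Z)$, and without $\sigma$ there is no analogous splitting on which to build a representation of $\Mbeta$. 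Combined with the observation that the standard symplectic representation $\Mod(S_g) \to \Sp(2g,\Z)$ has target strictly too large when $h \leq g-1$, this should leave only finite-image representations. Making this rigorous would require a careful analysis of images of Dehn twists in $\Mbeta$, together with rigidity results for the Torelli subgroup $\I_g \subset \Mbeta$, in order to exclude all exotic representations.

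Once $F_*$ has finite image, pass to the finite orbifold cover $\teich(S_g)/K \to \hat{\CR}_g$ corresponding to $K = \ker F_*$. On this cover $\tilde F$ is $K$-invariant and descends to a holomorphic map into $\h_h$. Since $K$ also has finite index in $\Mod(S_g)$, the source is a finite orbifold cover of $\M_g$; composing with the projection $\h_h \to \A_h$ yields a holomorphic map from this cover into $\A_h$ which, by the natural extension to finite covers of the main theorem of~\cite{farb2021global} (asserting that for $h \leq g-1$ there is no nonconstant holomorphic map $\M_g \to \A_h$), must be constant. Therefore $F$ itself is constant.
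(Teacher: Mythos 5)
Your overall shape is right (reduce to finiteness of every $\rho:\Mbeta \to \Sp(2h,\Z)$, then kill the map on a finite cover), which is indeed how the paper argues, but as written the proposal has two genuine gaps. First, the finiteness statement is the entire mathematical content of the theorem, and you do not prove it: you only assert that it ``should'' follow from geometric-group-theory techniques, the absence of the involution $\sigma$, and ``rigidity results for the Torelli subgroup $\I_g$.'' The last suggestion is not viable: $\I_g$ has infinite abelianization (Johnson), so it admits infinite-image representations into $\Sp(2h,\Z)$ already for $h=1$, and no rigidity theorem for $\I_g$ of the kind you would need exists. The paper instead proves this step as \Cref{theo:rigidity_group_d}, via a quite different route: restricting $\phi$ to $\Mod(R)$ for a genus $g-1$ subsurface $R$ and invoking Franks--Handel/Korkmaz, classifying conjugacy classes of (powers of) Dehn twists in $\Mbeta$, producing generating sets from the connectivity of the modified curve complex $\Nc_1(S_g)$, and using chain relations to force $\phi(T_a^2)$ to be a scalar of order dividing $4$; one must also handle the genuine exceptional representation in dimension $2(g-1)$ for $g$ even (it satisfies $\phi(T_a^2)=\pm i\,\Id$, so it cannot land in $\Sp(2(g-1),\Z)$). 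Your heuristic ``the symplectic representation has too large a target and there is no $\sigma$-splitting'' does not rule out exotic representations, which is exactly what the hard work excludes.

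Second, your concluding step rests on a false claim. You pass to the cover attached to $K=\ker F_*$, descend $\tilde F$ to a holomorphic map into $\h_h$, then compose with $\h_h\to\A_h$ and invoke ``the natural extension to finite covers'' of Farb's theorem, i.e.\ that no finite cover of $\M_g$ admits a nonconstant holomorphic map to $\A_h$ for $h\le g-1$. That extension is false, and this paper is precisely a counterexample: $\CR_g[\psi]$ is a finite (non-Galois, honest manifold) cover of $\M_g$ carrying the nonconstant holomorphic map $\Prym[\psi]$ to $\A_{g-1}[L]\to\A_{g-1}$. The correct conclusion, and the one the paper uses, is available to you one line earlier: once the monodromy is trivial, you have a holomorphic map from the finite cover (a quasiprojective variety, being a finite branched cover of $\M_g$) into the bounded domain $\h_h$, and any such map is constant. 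So the endgame is easily repaired, but as stated it is not a proof, and the main body of the argument (the representation-theoretic classification) is missing.
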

  \begin{remark} If one considers only effective group actions in the definition of orbifolds (\Cref{defn:orbi}),
    then \Cref{theo:triviality_rg} is not
    correct. The action of $\Msigma$ on $\teich(S_g)$ factors through $\Mbeta$, and similarly the action of $\Sp(2h,\Z)$
    on $\h_h$ factors
    through $\PSp(2h,\Z)$. Hence, for effective actions there is no obstruction at the level of homomorphisms $\Mbeta
    \to \PSp(2h,\Z)$, and the Prym construction globalizes to a holomorphic map of complex orbifolds.  I do not know if the analogous statement to \Cref{theo:prym_rigid} holds in this setting but it
    will entail answering the following.
    \begin{question} Fix $g$ and $h\leq g-1$. Classify homomorphisms
      \[\phi: \Mbeta \to \PSp(2h,\Z).\]
      More generally, classify homormophisms $\phi:\Mbeta \to \PGL(m,\C)$ for any $m$.
    \end{question}

    \end{remark}

  \medskip

  \noindent \textbf{Prym representation.} In the same way as the standard symplectic representation of $\Mod(S_g)$ is associated to the Torelli
  map, the Prym map has an associated representation
  \[ \Prym_*: \Msigma \to \Sp(2g-2),\Z). \]
  The first step in the proof of \Cref{theo:prym_rigid} is the following purely group theoretic result.
  It shows that $\Prym_*$ exhibits a similar level of rigidity as that of the
standard symplectic representation for $\Mod(S_g)$.

  \begin{theo}[\bf{Rigidity of $\Prym_*$}]\label{theo:rigidity_group}Let $g \geq 4$ and $m \leq 2(g-1)$. Let $\phi:
  \Mod(S_{2g-1},\sigma) \to \GL(m,\C)$. The following holds,
  \begin{enumerate}
  \item If $m < 2(g-1)$ then $\Ima(\phi)$ is cyclic of order at most 4.
  \item Let $\chi:\Msigma \to \C^*$ be a group homomorphism. If $m = 2g-2$ then $\Ima(\phi)$ is either cyclic of order at
    most 4 or $\phi$ is
    conjugate to the map:
    \[ f \to \chi(f) \Prym_*(f),\]
    where $\chi(f)^4 =1$.
  \end{enumerate}
\end{theo}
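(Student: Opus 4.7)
The plan is to adapt the strategy used for rigidity of the standard symplectic representation of $\Mod(S_g)$ (Franks--Handel, Korkmaz) to the centralizer $\Gamma := \Msigma$, organizing the argument into three steps.

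First, I would identify a ``Prym generating set'' for a finite-index subgroup $\Gamma_0 \leq \Gamma$ consisting of Dehn twists $T_c$ along $\sigma$-invariant simple closed curves $c \subset S_{2g-1}$, together with products $T_c T_{\sigma(c)}$ for curves $c$ disjoint from $\sigma(c)$. Using $\sigma$-equivariant lantern and chain relations, compute $H_1(\Gamma_0;\Z)$ and show that it is a cyclic torsion group of order dividing $4$. This immediately bounds the order of any character $\chi:\Gamma\to\C^*$ and explains the constraint $\chi^4=1$ appearing in the statement.

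Second, I would establish the dimension lower bound: if $m < 2(g-1)$ then $\phi(\Gamma)$ is finite, in fact cyclic of order at most $4$. The hypothesis $g\geq 4$ ensures enough room to embed several ``sub-Prym'' mapping class groups, namely stabilizers in $\Gamma$ of $\sigma$-invariant subsurfaces $\Sigma\subset S_{2g-1}$. Applying Franks--Handel--Korkmaz-type rigidity to each such subgroup, and propagating eigenvalue constraints through $\sigma$-equivariant relations, forces $\phi(T_c)$ to be a root of unity for every Prym twist $T_c$. Combined with the abelianization bound of Step~1, this shows that $\phi$ factors through the finite abelianization of $\Gamma_0$, and hence through a finite cyclic group of order dividing $4$.

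Third, suppose $m = 2(g-1)$ and $\phi(\Gamma)$ is infinite. Because $\Prym_*$ has Zariski-dense image in $\Sp(2(g-1),\C)$, decomposing $\phi$ into a composition series and applying Step~2 forces the only subquotient with infinite image to be an irreducible $2(g-1)$-dimensional piece. A uniqueness argument, identifying such a representation as a character twist of $\Prym_*$ via the trace of $\phi$ on $\sigma$-invariant multi-twists and the Zariski density of $\Ima(\Prym_*)$, then produces a conjugation making $\phi = \chi\cdot\Prym_*$, with $\chi^4 = 1$ forced by Step~1. The main obstacle I anticipate is Step~2: the classical Franks--Handel argument for $\Mod(S_g)$ leverages normal generation by a single conjugacy class of Dehn twists, whereas $\Gamma$ mixes the two flavors of Prym twists above, and handling their interaction through $\sigma$-equivariant lantern configurations is where the bulk of the new topological input, and the need for $g \geq 4$, enters.
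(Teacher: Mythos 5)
Your outline shares the paper's broad skeleton (restrict to subsurface groups where Franks--Handel/Korkmaz applies, then propagate using generating sets, with the character pinned down by a finite cyclic abelianization), but two of your three steps have genuine gaps. In Step 2, you propose applying ``Franks--Handel--Korkmaz-type rigidity'' to stabilizers in $\Msigma$ of $\sigma$-invariant subsurfaces of $S_{2g-1}$. Those stabilizers are not mapping class groups of surfaces, and no such rigidity is available for them -- indeed, rigidity for a group of exactly this type is what the theorem asserts. The working move is different: take a subsurface $R\subset S_g$ of genus $g-1$ in the complement of a simple closed curve representing $[\beta]$; since every loop in $R$ has trivial monodromy, $\Mod(R)$ lifts into $\Msigma$ (twists lift to multi-twists), it is an honest mapping class group of genus $g-1\geq 3$, and Franks--Handel (for $m<2(g-1)$) or Korkmaz (for $m=2(g-1)$) applies to the restriction $\phi_R$. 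Moreover, ``propagating eigenvalue constraints through $\sigma$-equivariant relations'' is where the real content lies and your proposal does not supply it: one needs (a) a classification of when (powers of) twists are conjugate inside $\Mbeta$/$\Msigma$, (b) normal generation by those twist classes (via $\Mod(S_g)[2]$ being generated by squares of twists and the mod-$2$ stabilizer by transvections), (c) connectivity of a modified curve complex $\Nc_1(S_g)$ on which $\Mbeta$ acts transitively on vertices and edges, yielding a ``stabilizer of one curve plus one extra element'' generating set, and (d) explicit $k$-chain relations on the two sides of $b$ to force $\phi(T_{\tilde a})$ to be a scalar of order dividing $4$ and to exclude the symplectic alternative where it must be excluded. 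Lanterns alone will not do this, and nothing in your step produces these generating sets.

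Step 3 is also not yet a proof. The composition-series reduction has a hole: if all composition factors have finite cyclic image you get a solvable-by-finite image, and you still must argue this forces the image of $\phi$ itself to be finite cyclic of order at most $4$ (finite abelianization of $\Msigma$ does not immediately kill infinite solvable quotients). More seriously, the ``uniqueness argument via traces on $\sigma$-invariant multi-twists and Zariski density of $\Ima(\Prym_*)$'' is exactly the assertion to be proved, not an argument. The paper instead conjugates so that $\phi_R$ equals the standard symplectic representation (Korkmaz), observes $\Prym_*$ restricts to the same representation on the lifted $\Mod(R)$, and then runs through an explicit finite generating set for $\Msigma$ (Dey et al.): lifts of $\Mod(T)$ for the torus neighborhood $T=N(a_1\cup b_1)$ commute with a lifted $\Mod(\tilde R)$ whose image is all of $\Sp(2(g-1),\Z)$, hence go to scalars; the bounding-pair generators are rewritten by chain relations in terms of elements already controlled; and $\sigma$, being central, goes to a scalar. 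Only then does the discrepancy $f\mapsto\phi(f)\Prym_*(f)^{-1}$ become a well-defined character, and its order is bounded by $\Msigma^{\Ab}\cong\Z/4\Z$ (your Step 1, which should be carried out for the full group rather than a finite-index subgroup, since a bound on $H_1$ of a finite-index subgroup does not by itself bound characters of $\Msigma$). So the skeleton is right, but the decisive inputs -- the lifted $\Mod(R)$ trick, the two generating sets including connectivity of $\Nc_1(S_g)$, the chain-relation computations, and the generator-by-generator comparison with $\Prym_*$ -- are missing.
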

Note that, unlike the case of $\Mod(S_g)$ (as shown in~\cite{Franks-Handel,korkmaz}), the group $\Msigma$ has a \emph{nontrivial} (infinite-image) linear representation of
dimension less than $2g$, and the same is true for $\Mbeta$ when $g$ is even (see \Cref{theo:rigidity_group_d}).

Since $\A_h$ for $h \geq 1$ is a $K(\pi,1)$ in the category of orbifolds, \Cref{theo:rigidity_group} implies
the following (see also \Cref{cor:symp_rep}).
\begin{cor}\label{cor:continuous_rigidity} Fix $g \geq 4$ and $h \leq g-1$. Let $F:\CR_g \to \A_h$ be a continuous map of orbifolds. If $h = g-1$, then
  $F$ is homotopic to $\Prym$. Otherwise, there exists a cyclic cover $\tilde{\CR}_g$ of $\CR_g$ of order at most $4$, so that
  the induced map $\tilde{F}:\tilde{\CR}_g \to \A_h$ is homotopic to a constant map.
\end{cor}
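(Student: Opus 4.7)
The plan is to bootstrap Theorem \ref{theo:rigidity_group} using the fact that $\A_h = \h_h/\Sp(2h,\Z)$ is a $K(\pi,1)$ orbifold for $h\geq 1$ (Siegel upper half space is contractible). By the standard correspondence for $K(\pi,1)$ orbifold targets, orbifold-homotopy classes of continuous maps $F:\CR_g \to \A_h$ biject with conjugacy classes of homomorphisms
$\phi_F:\Msigma = \pi_1^{\orb}(\CR_g) \to \Sp(2h,\Z) = \pi_1^{\orb}(\A_h)$.
Post-composing with the inclusion $\Sp(2h,\Z)\hookrightarrow \GL(2h,\C)$ makes Theorem \ref{theo:rigidity_group} directly applicable with $m=2h \leq 2(g-1)$.

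If $h < g-1$, part (1) of the theorem forces $\Ima(\phi_F)$ to be cyclic of order $k\leq 4$. I would take $\tilde{\CR}_g \to \CR_g$ to be the orbifold cover corresponding to $\ker(\phi_F)\trianglelefteq \Msigma$; it has cyclic deck group of order $k\leq 4$, and the pulled-back homomorphism $\pi_1^{\orb}(\tilde{\CR}_g)\to \Sp(2h,\Z)$ is trivial. The $K(\pi,1)$ correspondence then yields that $\tilde F$ is orbifold-homotopic to a constant map.

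If $h=g-1$, part (2) gives two alternatives. If $\Ima(\phi_F)$ is cyclic of order $\leq 4$, the previous paragraph still applies, giving a cyclic cover of order at most $4$ on which $F$ becomes trivial. Otherwise $\phi_F$ is $\GL(2(g-1),\C)$-conjugate to $f\mapsto \chi(f)\Prym_*(f)$ with $\chi^4=1$; since $\phi_F$ lands in $\Sp(2(g-1),\Z)$ and so does $\Prym_*$, the integrality of the matrix entries forces $\chi(f)\in\{\pm 1\}$. Because $-I$ acts trivially on $\h_{g-1}$, the twisted representation $\chi\Prym_*$ induces the same orbifold-homotopy class of map $\CR_g\to \A_{g-1}$ as $\Prym_*$, yielding $F\simeq \Prym$.

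The main obstacle is upgrading the $\GL(2(g-1),\C)$-conjugacy supplied by Theorem \ref{theo:rigidity_group} to a $\Sp(2(g-1),\Z)$-conjugacy, which is what the $K(\pi,1)$ correspondence actually detects. Since both $\phi_F$ and $\Prym_*$ preserve the standard integral symplectic form, any intertwiner must be a symplectic similitude, and one needs to convert this similitude into a genuine equivariant homotopy between the corresponding maps to $\A_{g-1}$. Making this conversion precise, together with cleanly eliminating the $\chi$-twist, is where the delicate work concentrates and is presumably where the auxiliary \Cref{cor:symp_rep} referenced in the corollary statement enters.
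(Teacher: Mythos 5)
Your proposal is essentially the paper's argument: the paper deduces this corollary in one line from the fact that $\A_h$ is an orbifold $K(\pi,1)$, so (free) homotopy classes of orbifold maps $\CR_g \to \A_h$ correspond to conjugacy classes of homomorphisms $\Msigma \to \Sp(2h,\Z)$, which are classified by \Cref{theo:rigidity_group} together with \Cref{cor:symp_rep}; your kernel-cover treatment of the finite-image case and the observation that $-\Id$ acts trivially on $\h_{g-1}$ are exactly the intended steps. The obstacle you flag at the end is precisely what \Cref{cor:symp_rep} supplies (by Reiner's theorem the intertwiner can be taken in $\Delta(2(g-1),\Z)$, and $\chi$ is then forced into $\{\pm\Id\}$), with the caveat that this yields conjugacy only in $\Delta(2(g-1),\Z)$ rather than $\Sp(2(g-1),\Z)$, so a priori $F$ could also be homotopic to $\Prym$ post-composed with the involution of $\A_{g-1}$ induced by $\tau \mapsto -\overline{\tau}$ --- an ambiguity the corollary's wording elides and which the paper itself removes only later, using holomorphicity, in \Cref{subsubsec:f-prym}.
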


\noindent\textbf{Strategy of proof of\Cref{theo:prym_rigid,theo:triviality_rg}.} Our proof follows the general strategy laid out by Farb in \cite{farb2021global}, see \Cref{sec:farb_proof}. The two main aspects of
the proof  are the topological and
holomorphic sides of the story.
\begin{enumerate}
\item In \Cref{sec:Topology}, we classify low-dimensional linear and symplectic representations of $\Mbeta$
  and $\Msigma$. Our approach is based on (and extends) the results of Franks-Handel, and Korkmaz \cite{korkmaz,Franks-Handel},
  which classify linear representations for the full mapping class group $\Mod(S_g)$. A key ingredient in our proof is
  to prove connectedness of the complex of curves $\Nc_1(S_g)$ (see \Cref{subsec:complex-curves}). This covers the first step in Farb's proof.

\item In \Cref{sec:holo}, we add the assumption of holomorphicity for the map $F:\CR_g \to \A_h$ to deduce \Cref{theo:triviality_rg} and
  reduce the proof of \Cref{theo:prym_rigid} to the case of $h = g-1$ and
  $F$ homotopic to $\Prym$. In order to avoid orbifold issues when dealing with the $h =g-1$ case,
  we will pass to a suitable (smooth) cover $\CR_g[\psi]$ of $\CR_g$. Steps 2-4 in Farb's
  proof~\cite[Section 1]{farb2021global} for the rigidity of the Torelli map $J: \M_g \to \A_g$, extend to our
  case without modifications. Step 5, the existence of $\A_{g-1}$-rigid curves, requires some minor
  modifications. They arise due to our use of finite non-Galois covers of $\overline{\M_g}$. An
  alternative approach using variations of hodge structures is also given.
  % To deal with them, we use the concept
  %of $\psi$-structures \cite[Ch 16]{arbarello2011geometry}.
\end{enumerate}

\noindent\textbf{Speculation.} One would be tempted to conjecture that the only maps $\CR_g \to \A_g$ are
given by $\CR_g \xrightarrow{\Prym} \A_{g-1} \to \A_g$ and $\CR_g \to \M_g \xrightarrow{J} \A_g$.
For our proof strategy to work, one would first need to classify homomorphisms \[ \Msigma \to \Sp(2g,\Z).\]
A step in this direction is given by the recent work of Kasahara~\cite[Cor 1.3]{kasahara2023crossed}, which shows
that there are no irreducible representations $\Mod(S_g) \to \GL_{2g+1}(\C)$ for $g$ big enough.
\mbox{}\\

\noindent\textbf{Acknowledgments.} I am very grateful to my advisor Benson Farb for suggesting the problem, his guidance
and constant encouragement throughout the whole project, and for numerous comments on earlier drafts of the paper. I
would like to thank Curtis McMullen and Dan Margalit for comments on an earlier draft;
Eduard Looijenga for explaining to me properties of $\partial \M_g$ and VHS; and Frederick Benirschke and Casimir Kothari for
many insightful conversations. I would also like to thank the anonymous referees for their many suggestions
to help with the readability and clarity of the paper.

\section{Orbifold structures on $\CR_g$}\label{sec:orbi_str}
In this section we show how to give $\CR_g$ the structure
of a complex orbifold. First, let us briefly recall the definition of orbifold and maps between orbifolds~\cite[Remark
2.1]{farb2021global}.

\medskip
\begin{defn}[\textbf{Orbifolds and maps between orbifolds}]\label{defn:orbi}
  Let $X$ be a simply connected manifold (resp. complex manifold) and let $\Gamma$ be a group acting properly discontinuously on $X$ by
  homeomorphisms (resp. biholomorphisms), but not necessarily
  freely nor effectively. Then the quotient $X/\Gamma$ is a topological (resp. complex) \emph{orbifold}. Define
  $\pi_1^{\orb}(X/\Gamma) := \Gamma$ as the \emph{orbifold fundamental group} of $X/\Gamma$. Let $Y/\Lambda$ be another orbifold, and $\rho:\Gamma \to \Lambda$ a
  group homomorphism.  A continuous (resp. holomorphic) map in the category of orbifolds $F:X/\Gamma \to Y/\Lambda$ is a
  map so that there exists a
  continuous (resp. holomorphic) lift $\tilde{F}: X \to Y$ that \emph{intertwines} $\rho$:
  \[ \tilde{F}(\gamma.x) = \rho(\gamma).\tilde{F}(x) \ \ \ \mbox{ for all $x \in X, \gamma \in \Gamma$.}\]
  If this is the case we denote $\rho$ by $F_*:\Gamma \to \Lambda$. Note that postcomposition of $F_*$ with an inner automorphism $c_\ell$
  of
  $\Lambda$ changes $\tilde{F} \to \ell \circ \tilde{F}$, so that $F_*$ is defined up to postcomposition with inner
  automorphisms of $\Lambda$.
\end{defn}

\begin{remark}
  If $\Gamma$ acts effectively, our definition agrees with Thurston's definition of \emph{good}
  orbifold~\cite[Ch.13]{thurston1979geometry}, with $X$ being the orbifold universal cover of $X/\Gamma$.
  For noneffective group actions our defintion can be more restrictive, but captures the nature of
  examples such as the Torelli map and the $\Prym$ map.
\end{remark}

\noindent Let $S_g$ be a closed surface of genus $g$. The mapping class group $\Mod(S_g)$ is defined as
\[ \Mod(S_g) := \pi_0(\Diff^+(S_g)). \]
Let $\teich(S_g)$ denote the \emph{Teichm\"uller} space of $S_g$, the space of holomorphic structures
on $S_g$ up to isotopy. $\Mod(S_g)$ acts on $\teich(S_g)$ properly discontinously, but not freely, by
biholomorphisms. Let $[\beta] \not= 0 \in H_1(S_g,\Z/2\Z)$ and define
\[ \Mbeta := \Stab_{\Mod(S_g)}([\beta]), \]
as the stabilizer of $[\beta]$ in $\Mod(S_g)$. Then define
\[ \hat{\CR}_g := \teich(S_g)/\Mbeta. \]
In particular, $\hat{\CR}_g$ has the structure of a complex orbifold with $\pi_1^{\orb}(\hat{\CR}_g) = \Mbeta$.
Furthermore, $\hat{\CR}_g$ is in bijective correspondence with $\CR_g$ and thus endows $\CR_g$ with an orbifold structure.
The forgetful map
\[ \hat{\CR}_g \to  \M_g,\]
is a finite orbifold cover of $\M_g$ of degree $2^{2g}-1$, each fiber over a generic $X \in \M_g$ corresponding to $H_1(X,\Z/2Z)^*$.

One of the goals of this paper is to classify all holomorphic maps of complex orbifolds $\hat{\CR}_g \to \A_h$
for $h\leq g-1$. Define the map,
\[ \widehat{\Prym}:\hat{\CR}_g \to \A_{g-1} \ \ , \ \ (X,\theta) \to \Prym(X,\theta). \]
\Cref{theo:triviality_rg} shows that $\widehat{\Prym}$ \emph{cannot} be a map in the category
of complex orbifolds.

\medskip
  \noindent\textbf{Obstruction.} The obstruction to realize $\widehat{\Prym}$ as map of orbifolds is the \emph{nonexistence}
  of nonfinite representations $\phi:\Mbeta \to \Sp(2g - 2,\Z)$. As we explain in more detail in \Cref{sec:Topology},
  the Prym construction defines a representation:
  \[ \widehat{\Prym}_*: \Mbeta \to \PSp(2g-2,\Z),\]
  which does not lift to a symplectic representation.
  Thus, there is an associated nonsplit central $\Z/2\Z$ extension:
  \[ 1 \to \Z/2\Z \to H \to \Mbeta \to 1 \]

  By definition, there is a representation $H \to \Sp(2g-2,\Z)$ and $H$ acts on $\teich(S_g)$ via $\Mbeta$ so that
  every point is an orbifold point of order at least 2.
  Thus, $\CR_g$ can be endowed with an orbifold structure for which the Prym construction does define
  a holomorphic map $\Prym$ in the category of complex orbifolds. In fact, there is a concrete description of $H$ and this
  alternative orbifold structure, as we now explain.

\medskip
  \noindent\textbf{Moduli space of double covers.}  Let $Y$ be a complex smooth genus $2g-1$ curve, and $\sigma_Y:Y \to Y$
  a fixed-point free biholomorphic involution. Say that two such pairs $(Y_1,\sigma_{Y_1})$ and
  $(Y_2,\sigma_2)$ are equivalent if there is a biholomorphism $f:Y_1 \to Y_2$
  such that $f^{-1}\sigma_2 f = \sigma_1$. Then, there is a bijection
  \[ \phi: \{ [(Y,\sigma_Y)]\} \to \CR_g \ \ , \ \  [(Y,\sigma_Y)] \to [(Y/\sigma_Y,\theta_Y)]\]
  where $\theta_Y$ is given by the monodromy of the covering $p:Y \to Y/\sigma_Y$.

  % Let $\U_g$ be the moduli space of pairs $(Y,\sigma_Y)$,
  % where $(Y_1,\sigma_1) \cong (Y_2,\sigma_2)$ if

  Let $\sigma$ be a fixed-point free involution on the closed surface $S_{2g-1}$, and let $[\sigma]$
  be its class in $\Mod(S_{2g-1})$. Let $\Fix([\sigma]):= \teich(S_{2g-1})^{[\sigma]}$
  and define
  \[ \Msigma:= C_{\Mod(S_{2g-1})}([\sigma]).\]
  Then, there is an exact sequence
  \[ 1 \to \dprod{\sigma} \to \Mod(S_{2g-1},\sigma) \to \Mod(S_g, [\beta])\to 1 ,\]
  and, via the bijection $\phi$,
  \[ \CR_g = \Fix([\sigma])/\Msigma \]
  so that $\pi_1^{\orb}(\CR_g) = \Msigma$.

  Furthermore, $\phi$ induces a $2:1$ map of complex orbifolds (but a biholomorphism in the complex category)

   \begin{center}
   \begin{tikzcd}
     \Fix([\sigma]) \ar[d] \ar[r, "\tilde{\phi}"] & \teich(S_g) \ar[d]\\
     \ar [d,"\F_1"] \CR_g \ar[r, "\phi"] & \hat{\CR}_g \ar[d, "\F_2"] \\
     \M_{2g-1} & \M_g
   \end{tikzcd}
   \end{center}

   The $\F_i$ are the respective forgetful maps, but only $\F_2$ gives an orbifold covering.

   Thus, viewing $\CR_g$ as equivalence classes of curves with an involution is precisely
   the alternative orbifold structure stated at the end of the previous section, and
   the Prym construction induces a holomorphic map of complex orbifolds,
   \[ \Prym:\CR_g \to \A_{g-1} \ \ , \ \ (Y,\sigma_Y) \to \Prym(Y/\sigma_Y,\theta_Y).\]
  The difference between $\CR_g$ and $\hat{\CR}_g$ is precisely the difference between having covers of $\M_g$
  given by $G$-structures or by $G$-covers (cf. \cite[Ch 16, p 525-526]{arbarello2011geometry}), in our case $G = \Z/2\Z$.

%%% Local Variables:
%%% mode: latex
%%% TeX-master: "global_rigidity_prym"
%%% End:

\section{Topological results}\label{sec:Topology}
Let $S_g$ be a closed surface of genus $g \geq 1$, and $[\beta] \in H_1(S,\Z/2\Z)^*$. Then, there is a (unique up
to isomorphism) double cover \[ p:S_{2g-1} \to S_g, \]
with deck transform $\sigma$, and monodromy given by intersection with $[\beta]$.
% Define
% \[ \Mbeta := \Stab_{\Mod(S_g)}([\beta]), \mbox{ and } \Msigma := C_{\Mod(S_{2g-1})}(\sigma) \]
% the stabilizer of $[\beta]$ in $\Mod(S_g)$, and the centralizer of $\sigma$ in $\Mod(S_{2g-1})$ respectively.
By the work of Birman-Hilden~\cite{Birman-Hilden},
\[ \Msigma = \pi_0(\Diff^+(S_{2g-1},\sigma)), \]
which gives an exact sequence,
\[ 1 \to \dprod{\sigma} \to \Msigma \to \Mbeta \to 1. \]
\begin{remark}
 Note that $\Mod(S_g)$ acts
 transitively on $H_1(S_g,\Z/2\Z)$, and so any choice of $[\beta] \in H_1(S_g,\Z/2\Z)^*$ gives
 conjugate subgroups $\Mod(S_g,[\beta])$ within $\Mod(S_g)$.
The same remark applies to $\Mod(S_{2g-1},\sigma)$ in $\Mod(S_{2g-1})$, for different choices of $\sigma$.
\end{remark}
\medskip
\noindent \textbf{Prym representation.} For any $f \in \Msigma$, denote by $f_*$ its induced action on $H_1(S_{2g-1},\Z)$. As $f\sigma = \sigma f$, $f_*$
preserves the eigenspaces of $\sigma_*$. In particular, $f_*$
preserves $H_1(S_{2g-1},\Z)^-$, which consists of $\sigma$-anti-invariant elements.

Let $\hat{i}_{-} := \frac{1}{2} \hat{i}$, for $\hat{i}$ the restriction of the intersection pairing on $H_1(S_{2g-1},\Z)$
to $\Hs{1}{2g-1}^-$. Then $f_*$ will further preserve $\hat{i}_{-}$; thus by choosing a symplectic basis we obtain a representation
\[
  \Prym_*: \Msigma \to \Sp(2g - 2,\Z),
\]
called the \emph{Prym representation} of $\Msigma$.

Let $f \in \Mbeta$, then there is a lift $\tilde{f} \in \Msigma$, well-defined up to composition
with $\sigma$. As $\sigma_*$ acts as $-1$ on $H_1(S_{2g-1},\Z)^-$, the Prym representation induces a \emph{projective Prym representation},
\[ \widehat{\Prym}_*: \Mbeta \to \PSp(2g - 2,\Z).\]

In this section we build on the results of Franks-Handel and Korkmaz\cite{korkmaz,Franks-Handel}, to classify low-dimensional linear and symplectic representations
of $\Mbeta$ and $\Msigma$.

\begin{remark} The existence of $\chi:\Msigma \to \C^*$ in \Cref{theo:rigidity_group} is possible due to the fact that
  \[ \Msigma^{\Ab} \cong \Z/4\Z.\]
  Similarly,
  \[\Mbeta^{\Ab} \cong \Z/d\Z\]
  where $d = 2$ for $g$ even and $4$ otherwise~(see Sato~\cite[Theorem 1.2]{Sato}, or the appendix for an alternative proof of the even case).
\end{remark}

A similar rigidity result as of \Cref{theo:rigidity_group} holds for $\Mod(S_g,[\beta])$,
\begin{theo}\label{theo:rigidity_group_d} Let $g \geq 4$ and $m \leq 2g - 2$. Let $\phi:\Mod(S_g,[\beta]) \to
  \GL(m,\C)$. Then the following holds,
  \begin{enumerate}
  \item If either,
    \begin{enumerate}
    \item $m < 2g - 2$ or,
    \item $m = 2g - 2$ and $g$ odd or,
    \item $m = 2g - 2$, $g$ even and $\Ima(\phi) \subset \SL(m,\C)$.
    \end{enumerate}
    Then, $\Ima(\phi)$ is abelian, so it is a quotient of $\Z/4\Z$.
  \item Otherwise, $\phi$ is induced from a representation $\tilde{\phi}:\Msigma \to \GL(m,\C)$ such that $\tilde{\phi}(\sigma) = 1$.
    In particular, $\phi(T_a^2) = \pm i\Id$, for \\$\hat{i}_2([a],[\beta]) = 1$.
  \end{enumerate}
\end{theo}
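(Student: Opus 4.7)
The strategy is to reduce Theorem \ref{theo:rigidity_group_d} to Theorem \ref{theo:rigidity_group} via the central extension $1 \to \langle\sigma\rangle \to \Msigma \to \Mbeta \to 1$, and then separate the two parities of $g$ through an abelianization computation. Given $\phi:\Mbeta \to \GL(m,\C)$, let $\tilde\phi := \phi \circ q:\Msigma \to \GL(m,\C)$, where $q$ is the quotient map; by construction $\tilde\phi(\sigma) = 1$ and $\Ima(\tilde\phi) = \Ima(\phi)$. Applying Theorem \ref{theo:rigidity_group} to $\tilde\phi$ yields either (a) $\Ima(\tilde\phi)$ is cyclic of order at most $4$, giving conclusion (1) immediately, or (b) $m = 2(g-1)$ and, after conjugation, $\tilde\phi(f) = \chi(f)\Prym_*(f)$ for some character $\chi:\Msigma \to \C^*$ with $\chi^4 = 1$.

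In case (b), since $\sigma_*$ acts as $-\Id$ on $H_1(S_{2g-1},\Z)^-$ we have $\Prym_*(\sigma) = -\Id$, so $\tilde\phi(\sigma) = 1$ forces $\chi(\sigma) = -1$. The five-term exact sequence for the central extension above identifies the image of $\sigma$ in $\Msigma^{\Ab} \cong \Z/4\Z$ with a generator of the kernel of the surjection $\Msigma^{\Ab} \twoheadrightarrow \Mbeta^{\Ab}$. When $g$ is odd, $\Mbeta^{\Ab} \cong \Z/4\Z$ makes this surjection an isomorphism, so $\sigma \mapsto 0$ and no character satisfies $\chi(\sigma) = -1$: case (b) cannot occur. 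When $g$ is even, $\Mbeta^{\Ab} \cong \Z/2\Z$ and $\sigma$ maps to the unique order-$2$ element of $\Z/4\Z$, forcing $\chi$ to have order exactly $4$.

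For the $\SL$ clause of (1), assume $g$ is even and $\Ima(\phi) \subset \SL(m,\C)$. Since $\Prym_*$ takes values in $\Sp(2(g-1),\Z)$ and $2(g-1) \equiv 2 \pmod 4$,
\[ \det \tilde\phi(f) = \chi(f)^{2(g-1)} = \chi(f)^2, \]
which is not identically $1$ for a character of order $4$, contradicting $\det \tilde\phi \equiv 1$. Hence case (b) is again ruled out and (1) holds. The only remaining realization of (b) is $m = 2(g-1)$, $g$ even, $\Ima(\phi) \not\subset \SL$, matching the setup of conclusion (2).

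It remains to verify $\phi(T_a^2) = \pm i\Id$ whenever $\hat{i}_2([a],[\beta]) = 1$. Under this hypothesis $\tilde a := p^{-1}(a)$ is a single simple closed curve in $S_{2g-1}$ double-covering $a$, so $T_a^2$ lifts to $T_{\tilde a} \in \Msigma$; moreover $\sigma$ restricts to a free, orientation-preserving rotation of $\tilde a$, giving $[\tilde a] \in H_1(S_{2g-1},\Z)^+$. Since $\sigma$-equivariance of the intersection form forces $H_1(S_{2g-1},\Z)^+$ and $H_1(S_{2g-1},\Z)^-$ to be orthogonal, the Picard--Lefschetz formula shows $T_{\tilde a}$ acts as the identity on $H_1(S_{2g-1},\Z)^-$, so $\Prym_*(T_{\tilde a}) = \Id$ and $\phi(T_a^2) = \chi(T_{\tilde a})\Id$. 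The main technical step, and the only real obstacle once Theorem \ref{theo:rigidity_group} is in hand, is to identify the class of $T_{\tilde a}$ with an odd element of $\Msigma^{\Ab} \cong \Z/4\Z$, for which one invokes Sato's presentation~\cite{Sato}; any order-$4$ character then takes the value $\pm i$ on it, completing the proof.
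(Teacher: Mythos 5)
Your proposal is correct, but it takes a genuinely different route from the paper. The paper proves item (1) directly on $\Mbeta$: it restricts $\phi$ to $\Mod(R)$ for $R$ the complement of an annulus around $b$, applies Franks--Handel/Korkmaz there, rules out $\phi_R=\psi$ by playing the two complementary odd-chain relations around $b$ against each other, and then propagates the conclusion using the conjugacy classification of (powers of) twists and the two generating sets from \Cref{subsec:conj,subsec:complex-curves}; item (2) is then obtained by re-running the argument of \Cref{theo:rigidity_group}-(2) with $\Prym_*$ replaced by the induced representation. You instead pull $\phi$ back along $\Msigma \twoheadrightarrow \Mbeta$ and use \Cref{theo:rigidity_group} as a black box, sorting the cases by the forced value $\chi(\sigma)=-1$, the abelianization facts ($[\sigma]=0$ in $\Msigma^{\Ab}$ for $g$ odd, $[\sigma]=[T_{\tilde a}]^2$ for $g$ even --- your five-term/Sato argument, which could equally be replaced by the paper's appendix corollary), the determinant identity $\det\tilde\phi=\chi^{2}$ for the $\SL$ clause, and $\Prym_*(T_{\tilde a})=\Id$ (your Picard--Lefschetz/orthogonality argument, which matches the paper's remark) for the $\pm i\,\Id$ claim. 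All ingredients you invoke are available in the paper, and each step checks out, so the argument is valid; what it buys is brevity and a clean isolation of exactly where parity, $\chi(\sigma)$, and the determinant hypothesis enter. What it costs is independence: within the paper's architecture, \Cref{theo:rigidity_group} is itself proved ``by the same argument as \Cref{theo:rigidity_group_d}'', so your route reverses the logical order --- it is not circular (the proof of \Cref{theo:rigidity_group} uses the technique, not the statement, of \Cref{theo:rigidity_group_d}), but the direct subsurface/chain-relation work still has to be done somewhere, now entirely on the $\Msigma$ side. One reading point: the ``Otherwise'' in item (2) must be interpreted as the non-abelian alternative (an abelian representation with image not in $\SL$ exists for $g$ even and does not satisfy $\phi(T_a^2)=\pm i\,\Id$); your dichotomy handles exactly this intended reading, as does the paper's own proof.
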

In particular, this shows that $\widehat{\Prym}_*$ does \emph{not} lift to a linear representation.
In fact, let
\[ 1 \to \Z/2\Z \to H \to \Mbeta \to 1 \]
be the central extension determined by $\widehat{\Prym}_*:\Mbeta \to \PSp(2g - 2,\Z)$. Then
\[ \Msigma \cong H,\]
where the isomorphism is given by $\tilde{f} \to (f,\tilde{f}_*)$. Thus,
\begin{cor} The sequence
  \[ 1 \to \dprod{\sigma} \to \Msigma \to \Mbeta \to 1, \]
  does not split.
\end{cor}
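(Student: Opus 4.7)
The plan is to argue by contradiction: if the sequence splits, then one obtains a genuine (not merely projective) symplectic representation of $\Mbeta$ of dimension $2(g-1)$ with infinite image, violating \Cref{theo:rigidity_group_d}.

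More precisely, suppose a section $s:\Mbeta \to \Msigma$ exists. Composing with the Prym representation yields a group homomorphism
\[ \phi := \Prym_* \circ s : \Mbeta \to \Sp(2(g-1),\Z) \subset \GL(2(g-1),\C). \]
First I would verify that $\phi$ is actually a linear lift of $\widehat{\Prym}_*$: since $\sigma_*$ acts as $-1$ on $H_1(S_{2g-1},\Z)^-$, we have $\Prym_*(\sigma)=-I$, so $\Prym_*$ descends to a map $\Msigma/\langle\sigma\rangle = \Mbeta \to \PSp(2(g-1),\Z)$, and by the defining property of $\widehat{\Prym}_*$ this descended map is exactly $\widehat{\Prym}_*$. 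Therefore the composition of $\phi$ with the quotient $\Sp(2(g-1),\Z) \to \PSp(2(g-1),\Z)$ equals $\widehat{\Prym}_*$.

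Now I apply \Cref{theo:rigidity_group_d} with $m = 2(g-1)$. The image of $\phi$ lies in $\Sp(2(g-1),\Z) \subset \SL(2(g-1),\C)$, so regardless of the parity of $g$ we fall into case $(1)$ of the theorem: the image of $\phi$ must be abelian of order at most $4$. Projecting to $\PSp(2(g-1),\Z)$ can only shrink the image, so $\widehat{\Prym}_*(\Mbeta)$ would also be finite of order at most $4$.

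It remains to derive the desired contradiction by exhibiting an element of infinite order in $\widehat{\Prym}_*(\Mbeta)$. I would pick a non-separating simple closed curve $a$ on $S_g$ with $\hat{i}_2([a],[\beta])=0$, so that $p^{-1}(a)$ consists of two disjoint simple closed curves $\tilde{a}_1,\tilde{a}_2$ on $S_{2g-1}$ exchanged by $\sigma$. Then $T_a$ lies in $\Mbeta$ and lifts to $T_{\tilde{a}_1}T_{\tilde{a}_2} \in \Msigma$. Its action on $H_1(S_{2g-1},\Z)^-$ is the symplectic transvection along the non-zero anti-invariant class $[\tilde{a}_1]-[\tilde{a}_2]$, which has infinite order in $\Sp(2(g-1),\Z)$, hence also in $\PSp(2(g-1),\Z)$. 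This contradicts the finiteness forced by \Cref{theo:rigidity_group_d}, and the sequence cannot split. The only subtle point is checking that the transvection really projects to an infinite-order element of $\PSp$, i.e.\ that no nontrivial power of it is $\pm I$; this is immediate since a nontrivial transvection has infinite order and $\pm I$ are scalar.
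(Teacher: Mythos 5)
Your proposal is correct and follows essentially the same route as the paper: a splitting would produce a linear (indeed symplectic, hence determinant-one) lift of $\widehat{\Prym}_*$ to a $2(g-1)$-dimensional representation of $\Mbeta$ with infinite image, contradicting \Cref{theo:rigidity_group_d}; the paper packages this via the central extension $H$ determined by $\widehat{\Prym}_*$ and the isomorphism $\Msigma \cong H$, while you compose a hypothetical section with $\Prym_*$ directly, which is a clean equivalent.

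One small correction to your final step: for the class $[\tilde a_1]-[\tilde a_2]$ to be nonzero you must also require $[a]\neq[\beta]$ in $H_1(S_g,\Z/2\Z)$. If $[a]=[\beta]$ the two lifts cobound (the preimage of $a$ separates $S_{2g-1}$) and, with the $\sigma$-coherent orientations, $[\tilde a_1]=[\tilde a_2]$, so $T_{\tilde a_1}T_{\tilde a_2}$ acts trivially on $H_1(S_{2g-1},\Z)^-$ and gives no infinite-order element. Choosing $a$ disjoint from a representative of $[\beta]$ and with $[a]\neq[\beta]$ mod $2$ (possible for $g\geq 2$) fixes this; alternatively one can simply invoke the fact, used later in the paper, that $\Prym_*$ surjects onto $\Sp(2(g-1),\Z)$, so any lift of $\widehat{\Prym}_*$ has infinite image.
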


% As we will see below, the first item of \cref{theo:rigidity_group} already reduces the proof of \cref{theo:prym_rigid} to
% the case of $h = g-1$. In order to tackle this case we will make use of \emph{Prym level L-structures}.

% \begin{defn}[\bf{Prym Level-L structures}]
%   For any integer $L \geq 0$, we define
%   \[ \Mod(S_{2g-1},\sigma)[L] = \Prym_*^{-1}(\Ker\{\Sp(2g - 2,\Z) \to \Sp(2g - 2,\Z/L\Z)\}) \]
% \end{defn}
% \begin{remark} Unlike $\Mod(S_g)[L]$, $\Msigma[L]$ contains torsion for $L \geq 3$. This is because the kernel of $\Prym_*$ contains
%   torsion; a lift of the hyperelliptic involution from $S_g$ to the cover $S_{2g-1}$ acts under $\Prym_*$
%   in the same way as $\sigma$. Thus, to exclude torsion elements we can combine this subgroup with level
%   subgroups for $\Mod(S_g)$:
%   \[ \Msigma[L][K] = \Msigma[L] \cap \pi^{-1}(\Mod(S_g)[K]). \]
% \end{remark}
% \begin{remark}
%   Under the projection $\Msigma \to \Mbeta$, we have $\Msigma[L] \cong \Mbeta[L]$ for $L \geq 3$. Thus,
%   $\CR_g[L] \cong \hat{\CR}_g[L]$ and in the proof of \cref{theo:prym_rigid}, we will freely move between
%   the two descriptions of the Prym moduli space.
% \end{remark}

\textbf{Proof outline for\Cref{theo:rigidity_group,theo:rigidity_group_d}.} Here we briefly sketch the main ideas
used in the proofs of \Cref{theo:rigidity_group,theo:rigidity_group_d}, the details will be given in the subsequent sections.
First observe that any $[\beta] \in H_1(S_g,\Z/2\Z)^*$ can be represented by a nonseparating simple closed
curve $b$. Then, there exists a subsurface (with boundary) $R \subset S_g$ of genus $g-1$ so that
\[ \Mod(R) \subset \Mbeta.\]
Results of Franks-Handel and Korkmaz applied to $\Mod(R)$, then give constraints on the restriction of $\phi$ to
$\Mod(R)$.

Moreover, as any element in $\Mod(R)$ fixes a point of $S_g$, $\Mod(R)$ lifts to $\widetilde{\Mod(R)} \subset \Msigma$
and one can check that $\Prym|_{\widetilde{\Mod(R)}}$ is precisely the symplectic representation of $\Mod(R)$.

In order to extend our knowledge of $\phi$ to the whole of $\Mbeta$, we find good generating sets for
$\Mbeta$. This is accomplished in two ways:
\begin{enumerate}
  \item A key property of $\Mod(S_g)$ is the fact that all nonseparating Dehn twists $T_a$ are conjugate to each other. This is no longer true in $\Mbeta$ and
    the results in \Cref{subsec:conj} give a classification of (powers of) such Dehn twists in $\Mbeta$ up to conjugation.
    As a corollary, there exists a normal generating set for $\Mbeta$ composed of only three types of Dehn Twists.
  \item \Cref{subsec:complex-curves} describes properties of the action of $\Mbeta$ on a modified complex of curves $\Nc(S_g)$. We show that
    $\Nc(S_g)$ is connected and $\Mbeta$ acts transitively on the edges and vertices of $\Nc(S_g)$. Thus, via a geometric group theory argument, there exists
    an additional generating set for $\Mbeta$.
\end{enumerate}
By using these two distinct generating sets, we are then able to constrain all low-dimensional representations of
$\Mbeta$ (except for the last item of \Cref{theo:rigidity_group_d}).
In \Cref{subsec:rel-msigma}, we lift the results from \Cref{subsec:conj,subsec:complex-curves} to $\Msigma$ and are able
to conclude all but the second item of \Cref{theo:rigidity_group}. The final ingredient in the proof is an explicit
(finite) generating set for $\Mbeta$, found by Dey-Dhanwani-Patil-Rajeevsarathy in~\cite{dey2021generating}, on which one can check that $\phi$ has the desired form.

%%% Local Variables:
%%% mode: latex
%%% TeX-master: "global_rigidity_prym"
%%% End:

\subsection{Conjugation in $\Mbeta$}\label{subsec:conj}
Let $\hat{i}_2$ be the algebraic intersection pairing mod $2$ on $H_1(S_g,\Z/2\Z)$. In what follows all homology classes are mod $2$. Let $\Sc(S_g)$ denote the set of isotopy classes of nonseparating simple closed curves (SCCs, from now on) in $S_g$. Theb
action of $\Mbeta$
splits $\Sc(S_g)$ into three orbits, classified as follows (See also~\cite[Lemma 2.3.1]{Donagi} for the
analogous statement for homology classes).
\begin{lemma}
  Let $\alpha_1,\alpha_2$ be a pair of nonseparating SCCs
  in $S_g$. The following are necessary and sufficient conditions for there to be a
  $\phi \in \Homeo^+(S_g)$ such that $\phi(\alpha_1) = \alpha_2$ and $\phi_*([\beta]) = [\beta]$.
  \begin{enumerate}
  \item $\hat{i}_2([a_1],[\beta]) = \hat{i}_2([a_2],[\beta]) = 1$.
  \item $\hat{i}_2([a_1],[\beta]) = \hat{i}_2([a_2],[\beta]) = 0$, and either both
    $[a_i] \not=[\beta]$ or both $[a_i] = [\beta]$.
  \end{enumerate}
\end{lemma}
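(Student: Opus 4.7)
Necessity is immediate: if $f \in \Mbeta$ satisfies $f(a_1) = a_2$, then $f_*[a_1] = [a_2]$ while $f_*[\beta] = [\beta]$ in $H_1(S_g,\Z/2\Z)$, so both the mod~$2$ intersection with $[\beta]$ and whether or not $[a_i]$ equals $[\beta]$ are preserved by $f$.

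For sufficiency, my plan is to pick, for each $i \in \{1,2\}$, an SCC $b_i$ with $[b_i] = [\beta]$ in $H_1(S_g,\Z/2\Z)$, placed in a standard topological configuration with $a_i$. Invoking the change of coordinates principle on $\Mod(S_g)$ then produces $f \in \Mod(S_g)$ carrying the ordered pair $(a_1, b_1)$ to $(a_2, b_2)$; any such $f$ automatically lies in $\Mbeta$ since $f_*[\beta] = f_*[b_1] = [b_2] = [\beta]$.

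In case~(1), I would take $b_i$ with geometric intersection $i(a_i, b_i) = 1$, so that each $(a_i, b_i)$ is a pair of SCCs meeting transversely in a single point; change of coordinates is transitive on such pairs. In case~(2) with $[a_i] \neq [\beta]$, I would instead take $b_i$ disjoint from $a_i$; since $[b_i] \neq [a_i]$ mod $2$, the union $a_i \cup b_i$ is nonseparating, so $(a_i, b_i)$ is a non-bounding pair, on which change of coordinates is again transitive. In case~(2) with $[a_i] = [\beta]$, no auxiliary $b_i$ is needed: any $f \in \Mod(S_g)$ with $f(a_1) = a_2$, which exists by transitivity of $\Mod(S_g)$ on nonseparating SCCs, automatically satisfies $f_*[\beta] = f_*[a_1] = [a_2] = [\beta]$ and hence lies in $\Mbeta$.

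The main technical step is producing the auxiliary curve $b_i$ with the prescribed homology class and prescribed geometric relation to $a_i$. The disjoint case is easy: $\hat{i}_2([a_i],[\beta]) = 0$ says $[\beta]$ lifts to a nonzero class in $H_1(S_g \setminus a_i, \Z/2\Z)$, which is represented by an SCC in the complement of $a_i$ and is nonseparating in $S_g$ since its homology class is nonzero. The ``meeting once'' case is slightly more subtle: I would first construct any SCC $b_0$ crossing $a_i$ transversely in a single point using a regular neighborhood of $a_i$, and then tube $b_0$ along arcs chosen inside $S_g \setminus a_i$ with SCC representatives of the class $[\beta] - [b_0]$, which is orthogonal to $[a_i]$ and hence realizable by curves disjoint from $a_i$ by the previous observation. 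This adjusts the homology class of $b_0$ to $[\beta]$ while preserving $i(a_i, b_0) = 1$. Modulo this surface-topology book-keeping, the lemma is a direct consequence of the three change of coordinates statements invoked above.
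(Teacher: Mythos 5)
Your proof is correct and follows essentially the same route as the paper: realize $[\beta]$ by an auxiliary simple closed curve in standard position relative to $a_i$ (meeting it once in case (1), disjoint in case (2)) and then apply the change of coordinates principle to the resulting configuration, with the $[a_i]=[\beta]$ subcase handled by transitivity of $\Mod(S_g)$ on nonseparating curves. The only cosmetic difference is in the disjoint subcase, where you invoke transitivity on ordered pairs of disjoint curves with nonseparating union directly, whereas the paper first matches the $[\beta]$-representatives and then applies change of coordinates again in the cut surface; the content is the same.
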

\begin{proof}
  Let $\alpha$ be a nonseparating simple closed curve in $S_g$. Observe that if $\hat{i}_2([\alpha],[\beta])
  = c$, there exists\footnote{Extend $\alpha$ to a geometric simplectic basis. Locally, there are only three choices for a
    representative of $[\beta]$ and they can be glued together as needed.} a simple closed curve $b$ representing $[\beta]$ and intersecting
  $\alpha$ transversely $c$ times. Let $\alpha_1$ and $\alpha_2$ be two simple closed curves in $S_g$ with
\[ \hat{i}_2([\alpha_1],[\beta])
  = \hat{i}_2([\alpha_2],[\beta]) = 1.\]
By the previous observation, there exist two $2$-chains
  $(\alpha_i,b_i)$ with $[b_i] = [\beta]$. Thus, by the change of coordinates principle~\cite[Ch 1, Sec 3]{FM}, there is a $\phi \in \Homeo^+(S_g)$ so that $\phi(\alpha_1) = \alpha_2$
  and $\phi(b_1)= b_2$. In particular, $\phi_*([\beta]) = [\beta]$ and the first claim follows.

  Now suppose that $\hat{i}_2([\alpha],[\beta]) =0$. If $[\alpha] = [\beta]$,
  the statement follows since $\Homeo^+(S_g)$ acts transitively on nonseparating simple closed curves and any $\phi$ with $\phi(\alpha) = \beta$
  fixes $[\beta]$. Suppose that $[\alpha] \not=[\beta]$.
  Let $b$ be a simple closed curve representing $[\beta]$ and \emph{not intersecting} $\alpha$. In particular, $\alpha$ is \emph{nonseparating}
  in $S_g-b$. Let $\alpha_1,\alpha_2$ be two simple closed curves such that
  \[ \hat{i}_2([\alpha],[\beta])
    = \hat{i}_2([\alpha_2],[\beta]) = 0.\]
  Then, there are two $b_i$ representing $[\beta]$ such that $\alpha_i \cap b_i = 0$. Let $\phi \in \Homeo^+(S_g)$ with
  $\phi(b_1) = b_2$. Then $\phi(\alpha_1)$ is nonseparating in the cut-surface $S_{b_2}$ obtained by cutting along
  $b_2$. Applying the change of coordinates again, there is a $\psi \in \Homeo^+(S_{b_2},b_2)$ such that
  $\psi(\phi(\alpha_1)) = \alpha_2$. Composing $\phi$ with the map $\overline{\psi} \in \Homeo^+(S_g)$, induced by
  $\psi$, the claim follows.
\end{proof}

\begin{cor}[\bf{Conjugation in $\Mbeta$}]\label{cor:conj_Mbeta}
  Let $a_1,a_2$ be a pair of isotopy classes of nonseparating SCCs
  in $S_g$. Let $T_{a_i}$ be the Dehn twists along $a_i$, then:
  \begin{enumerate}
  \item If $\hat{i}_2([a_1],[\beta]) = \hat{i}_2([a_2],[\beta]) = 1$ then $T_{a_1}^2$ and $T_{a_2}^2$ are
    conjugate in $\Mbeta$.
  \item If $\hat{i}_2([a_1],[\beta]) = \hat{i}_2([a_2],[\beta]) = 0$ and either for each $i$
    $[a_i] \not=[\beta]$ or for each $i$ $[a_i] = [\beta]$, then $T_{a_1}$ and $T_{a_2}$ are conjugate in
    $\Mbeta$.
  \end{enumerate}
\end{cor}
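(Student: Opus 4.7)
The corollary is essentially a direct translation of the preceding lemma into a statement about Dehn twists, using the fundamental conjugation formula
\[ f T_a f^{-1} = T_{f(a)} \quad \text{for all } f \in \Mod(S_g),\ a \in \Sc(S_g). \]
The plan is therefore to (a) verify that the relevant powers of Dehn twists actually lie in $\Mbeta$, and (b) apply the lemma to produce a conjugating element $f \in \Mbeta$ with $f(a_1) = a_2$, which then conjugates the Dehn twists by the formula above.

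First I would check the membership in $\Mbeta$. Recall that a Dehn twist $T_a$ acts on $H_1(S_g,\Z/2\Z)$ by $T_a([\beta]) = [\beta] + \hat{i}_2([a],[\beta])\cdot [a]$. Hence in case (1), where $\hat{i}_2([a_i],[\beta]) = 1$, the twist $T_{a_i}$ itself does \emph{not} preserve $[\beta]$, but $T_{a_i}^2([\beta]) = [\beta]+2[a_i]\cdot[\beta] = [\beta]$, so $T_{a_i}^2 \in \Mbeta$. This explains the appearance of $T_{a_i}^2$ rather than $T_{a_i}$ in the first case. In case (2), where $\hat{i}_2([a_i],[\beta]) = 0$, already $T_{a_i} \in \Mbeta$.

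Next, in each case apply the preceding lemma to obtain $f \in \Mbeta$ with $f(a_1) = a_2$: the hypotheses of the lemma are exactly the hypotheses of the two cases of the corollary. Then the conjugation formula gives $f T_{a_1} f^{-1} = T_{a_2}$, and hence also $f T_{a_1}^2 f^{-1} = T_{a_2}^2$. In case (1), the elements $T_{a_1}^2$ and $T_{a_2}^2$ are thereby conjugate in $\Mbeta$ via the element $f \in \Mbeta$; in case (2), the elements $T_{a_1}$ and $T_{a_2}$ are conjugate in $\Mbeta$ via $f$.

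I do not foresee any serious obstacles in this argument: the lemma does all of the heavy lifting via the change-of-coordinates principle, and the corollary is a formal consequence. The only point worth highlighting is the asymmetry between parts (1) and (2), which is forced by whether or not the Dehn twist itself preserves $[\beta]$ modulo $2$; this is exactly why Korkmaz--Franks--Handel-type arguments will later need to distinguish these two classes of twists when analyzing representations of $\Mbeta$.
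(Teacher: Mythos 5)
Your argument is correct and is exactly how the paper treats this statement: the corollary is an immediate consequence of the preceding lemma together with the identity $fT_{a}f^{-1}=T_{f(a)}$, and the paper gives no further proof. Your additional check that $T_{a_i}^2$ (case 1) resp.\ $T_{a_i}$ (case 2) actually lie in $\Mbeta$, via the mod-2 transvection formula, is a worthwhile detail the paper leaves implicit.
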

The importance of \Cref{cor:conj_Mbeta} lies on the following.

\begin{lemma}[\bf{Generating set-Twists}]\label{lemma:gen_dehn_pi1(Rg)}
  Let $g \geq 2$ and $[\beta]$ a nonzero class in $H_1(S_g,\Z/2\Z)$. $\Mbeta$ is generated by

  \begin{equation*}
    \begin{aligned}
      \{T_c^{\xi(c)}: c &\mbox{ nonseparating SCC in $S_g$ and } \xi(c) \in \{1,2\}, \\
      &\mbox{ with } \xi(c)= \hat{i}_2([c],[\beta]) + 1 \mod 2\}.
  \end{aligned}
  \end{equation*}
\end{lemma}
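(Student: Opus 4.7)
Let $\Gamma \subseteq \Mbeta$ denote the subgroup generated by the set $G$ in the statement; explicitly, $G$ contains $T_c$ when $\hat{i}_2([c],[\beta]) = 0$ and $T_c^2$ when $\hat{i}_2([c],[\beta]) = 1$, with $c$ always a nonseparating simple closed curve. The plan is to compare $\Gamma$ to $\Mbeta$ via the short exact sequence
\[
  1 \to \Mod(S_g)[2] \to \Mbeta \to \Stab_{\Sp(2g,\Z/2\Z)}([\beta]) \to 1,
\]
where $\Mod(S_g)[2]$ denotes the kernel of the mod-$2$ symplectic representation of $\Mod(S_g)$. It then suffices to verify two things: that $\Gamma$ contains the kernel $\Mod(S_g)[2]$, and that the image of $\Gamma$ in the quotient fills out $\Stab_{\Sp(2g,\Z/2\Z)}([\beta])$.

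For the kernel, I would invoke the classical theorem of Humphries that $\Mod(S_g)[2]$ equals the normal closure in $\Mod(S_g)$ of $T_a^2$ for any single nonseparating simple closed curve $a$. Since $\Mod(S_g)$ acts transitively on isotopy classes of nonseparating simple closed curves, this normal closure coincides with the subgroup generated by $\{T_c^2 : c \text{ nonseparating}\}$. Each such $T_c^2$ lies in $\Gamma$: if $\hat{i}_2([c],[\beta]) = 1$ then $T_c^2 \in G$ directly, while if $\hat{i}_2([c],[\beta]) = 0$ then $T_c \in G$, so $T_c^2 \in \Gamma$ by closure under taking powers. Hence $\Mod(S_g)[2] \subseteq \Gamma$.

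For the quotient, I would use the standard Witt-type fact that the stabilizer of a nonzero vector in a symplectic group over a field is generated by symplectic transvections along vectors in the orthogonal complement of the fixed vector. Specialized to the mod-$2$ setting, $\Stab_{\Sp(2g,\Z/2\Z)}([\beta])$ is generated by the transvections $\tau_v$ with $v \in [\beta]^{\perp}$. Every such $v \neq 0$ is realized as $v = [c]$ for some nonseparating simple closed curve $c$ (extend $v$ to a symplectic $\Z/2\Z$-basis and lift geometrically). The Dehn twist $T_c$ then satisfies $\hat{i}_2([c],[\beta]) = 0$, so $T_c \in G$, and its image in $\Sp(2g,\Z/2\Z)$ is exactly $\tau_v$. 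Thus $\Gamma$ surjects onto every generator of $\Stab_{\Sp(2g,\Z/2\Z)}([\beta])$, and combining with the previous paragraph forces $\Gamma = \Mbeta$.

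The main obstacle is importing these two classical generating-set results cleanly --- Humphries's level-$2$ theorem on the topological side, and the transvection description of symplectic vector-stabilizers on the algebraic side. Both are well-known but not entirely elementary; an alternative route avoiding Humphries would apply the Reidemeister--Schreier procedure directly to a Humphries generating set of $\Mod(S_g)$, at the cost of substantial bookkeeping that the level-$2$ viewpoint cleanly sidesteps. I note that \Cref{cor:conj_Mbeta} is not logically required for this lemma, but it refines $G$ into exactly three $\Mbeta$-conjugacy classes, which is the ``normal generating set composed of only three types of Dehn twists'' promised in the overall proof outline.
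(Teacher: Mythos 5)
Your proposal is correct and follows essentially the same route as the paper: the same mod-$2$ exact sequence $1 \to \Mod(S_g)[2] \to \Mbeta \to \Stab_{\Sp(2g,\Z/2\Z)}([\beta]) \to 1$, with Humphries' theorem handling the kernel and the transvection description of the vector stabilizer (which the paper cites rather than reproves) handling the quotient. The extra details you supply --- realizing vectors of $[\beta]^{\perp}$ by nonseparating curves and noting $T_c^2 \in \Gamma$ when $\hat{i}_2([c],[\beta]) = 0$ --- are exactly the implicit steps in the paper's two-line argument.
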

\begin{proof}
  Let $\Lambda_g[\beta]$ be the stabilizer of $[\beta]$ in $\SP(2g,\Z/2\Z)$, and consider the following exact sequence, given by reducing the symplectic representation
  mod $2$.
  \begin{center}
  \begin{tikzcd}
    1 \ar[r] & \Mod(S_g)[2] \ar[r] & \Mbeta \ar[r, "\Psi_2"] & \Lambda_g[\beta] \ar[r] &1,
  \end{tikzcd}
\end{center} $\Lambda_g[\beta]$
is generated by transvections of the form $\psi_2(T_{c_i})$ for $\hat{i}_2([c_i],[\beta]) =0$ (cf.\cite[Lemma 3.4]{Eduard1}).
Similarly $\Mod(S_g)[2]$ is generated by squares of nonseparating Dehn twists \cite[Thm 1]{Humphries1}, thus the claim follows.
\end{proof}

%%% Local Variables:
%%% mode: latex
%%% TeX-master: "global_rigidity_prym"
%%% End:

\subsection{Complex of curves}\label{subsec:complex-curves}

The generating set given by \Cref{lemma:gen_dehn_pi1(Rg)} is enough for providing bounds for the abelianization of
$\Mbeta$ (see appendix). Yet, in order to establish \Cref{theo:rigidity_group_d}, we need to make use of another generating set. For this purpose,
we examine the action of $\Mbeta$ on $\Sc(S_g)$.

As above, fix $[\beta] \in H_1(S_g,\Z/2\Z)^*$. Let $\Sc_1(S_g)$ be the set of isotopy classes $a$ of simple closed curves
on $S_g$ such that
$\hat{i}_2([a],[\beta]) = 1$, in particular $\Sc_1(S_g) \subset \Sc(S_g)$ as any separating curve is trivial in homology. For two isotopy classes $a,b$ of simple closed curves, let
\[ i(a,b) = \min \{ |\alpha \cap \beta|:  \alpha \in a, \ \beta \in b \} \]
denote their
\emph{geometric intersection number}~\cite[Ch 1.2.3]{FM}.
\begin{defn}[\bf{Complex of curves}]
  Let $\Nc_1(S_g)$ be the 1-complex with vertex set $\Sc_1(S_g)$. An edge $(a,c)$ between $a,c \in S_1(S_g)$ exists iff $i(a,c) = 1$ and $[a] + [c] \not= [\beta]$.
\end{defn}
The most important property of $\Nc_1(S_g)$ for our purposes, and whose proof will occupy most of this section, is the following.
\begin{lemma}\label{lemma:complex_con}
  For $g \geq 3$, $\Nc_1(S_g)$ is connected.
\end{lemma}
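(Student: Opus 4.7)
My plan is to prove connectivity of $\Nc_1(S_g)$ by showing that every vertex lies in the same component as a fixed basepoint $a_0 \in \Sc_1(S_g)$, arguing by induction on the geometric intersection number $k := i(a,a_0)$.

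For the inductive step $k \geq 2$, I would put $a$ and $a_0$ in minimal position and pick two cyclically consecutive intersection points $p,q$ on $a_0$ bounding a subarc $\alpha \subset a_0$ whose interior is disjoint from $a$. Cutting $a$ at $p$ and $q$ produces two arcs $\gamma_1, \gamma_2$, and the standard surgery yields two simple closed curves $a' = \gamma_1 \cup \alpha$ and $a'' = \gamma_2 \cup \alpha$, each (after a small isotopy) disjoint from $a$ and satisfying $i(a',a_0),\ i(a'',a_0) < k$. Over $\Z/2\Z$ we have $[a'] + [a''] = [a]$, so
\[ \hat{i}_2([a'],[\beta]) + \hat{i}_2([a''],[\beta]) = \hat{i}_2([a],[\beta]) = 1, \]
and exactly one of $a',a''$ lies in $\Sc_1(S_g)$; call it $c$. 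Since $[c] \neq 0$, the curve $c$ is nonseparating, and since the minimal-position assumption rules out bigons, $c$ is essential. Thus $c$ is a genuine vertex of $\Nc_1(S_g)$ with $i(c,a_0)<k$, and by induction $c$ is connected to $a_0$. Because $i(c,a) = 0$, the base case $k=0$ below connects $a$ to $c$.

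For the base cases $k \in \{0,1\}$ I would produce an intermediate vertex $d \in \Sc_1(S_g)$ directly. When $k = 1$ and $[a]+[a_0] \neq [\beta]$, the pair $(a,a_0)$ is already an edge. In the remaining subcases I need $d$ satisfying $i(d,a) = i(d,a_0) = 1$, $\hat{i}_2([d],[\beta]) = 1$, and $[d]+[a],\ [d]+[a_0] \neq [\beta]$. The existence of a homology class satisfying these four linear constraints in $H_1(S_g,\Z/2\Z)$ is a straightforward count once $g \geq 3$, and then the change of coordinates principle (in the form used in the proof of \Cref{cor:conj_Mbeta}) realizes the class by a simple closed curve with the required intersection data.

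The main obstacle is precisely the base case: ensuring that enough room is available in the complement of $a \cup a_0$ to realize the desired homology class by a simple closed curve meeting both $a$ and $a_0$ transversely exactly once, while simultaneously avoiding $[\beta]+[a]$ and $[\beta]+[a_0]$. The hypothesis $g \geq 3$ enters exactly here — in lower genus, $H_1(S_g,\Z/2\Z)$ and the corresponding configuration space of simple closed curves are simply too small for all constraints to be simultaneously satisfiable, which is the structural reason the hypothesis $g \geq 3$ appears in the statement.
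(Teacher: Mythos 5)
Your overall architecture (surgery to reduce intersection number, plus a separate analysis of pairs with geometric intersection $0$ or $1$) is essentially the paper's, but your base case contains a step that provably fails. In the subcase $i(a,a_0)=1$ with $[a]+[a_0]=[\beta]$ you ask for a single vertex $d$ with $i(d,a)=i(d,a_0)=1$, $\hat{i}_2([d],[\beta])=1$ and $[d]+[a],[d]+[a_0]\neq[\beta]$. No such $d$ exists, for any genus: $i(d,a)=i(d,a_0)=1$ forces $\hat{i}_2([d],[a])=\hat{i}_2([d],[a_0])=1$, hence
\[
\hat{i}_2([d],[\beta])=\hat{i}_2([d],[a]+[a_0])=1+1=0,
\]
contradicting $d\in\Sc_1(S_g)$. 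So the ``straightforward count'' does not just get tight in low genus — the constraints are contradictory, and your structural explanation of where $g\geq 3$ enters is therefore misplaced. This is exactly the point where the paper does something different: it connects such a pair by a path of length three, producing \emph{two} intermediate vertices $b_1,b_2\in\Sc_1(S_g)$ with $i(a,b_1)=i(b_1,b_2)=i(b_2,a_0)=1$ and consecutive sums $\neq[\beta]$ (the explicit configuration of \Cref{fig:refining_path}); the genus hypothesis is what lets one realize that four-curve configuration. (In the disjoint subcase $i(a,a_0)=0$ your single-$d$ plan is fine, since $[a]+[a_0]=[\beta]$ is then impossible — it would force $\hat{i}_2([a],[a_0])=1$ — and this is the paper's \Cref{lemma:1_complex}.)

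A secondary inaccuracy: in the inductive step you assert both surgered curves $\gamma_1\cup\alpha$ and $\gamma_2\cup\alpha$ can be isotoped off $a$. That is true only when the two consecutive intersection points have opposite signs; when they have the same sign, the arc $\alpha$ leaves $a$ from opposite sides at its two endpoints, and each surgered curve has mod-$2$ intersection $1$ with $[a]$, so neither can be made disjoint from $a$ (e.g.\ surger a $(1,2)$-curve along a $(1,0)$-curve on a torus summand). This is why the paper's \Cref{fig:path_surgery} distinguishes the two orientation cases and only claims the chosen surgered curve meets both original curves fewer times, rather than disjointness. The consequence for your scheme is that the induction funnels into the $i=1$ case — including the bad subcase $[c]+[a]=[\beta]$ — so the length-three detour above is not an optional refinement but is needed for your argument to close.
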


The proof of \Cref{lemma:complex_con} follows the same idea as when dealing with the standard complex of curves~\cite[Chapter 4]{FM}. We
  first define two associated $1$-complexes, the second of which contains $\Nc_1(S_g)$. We prove connectivity for each
  of them and then refine the paths to be in $\Nc_1(S_g)$.

  Define $\CC_1(S_g)$ to be the $1$-complex with the same vertex set as $\Nc_1(S_g)$ and edges
  between vertices $a,c$ if and only if $i(a,c) = 0$, i.e. if $a$ and $c$ admit \emph{disjoint} representatives.
  \begin{lemma}\label{lemma:disjoint_complex}
    $\CC_1(S_g)$ is connected for $g\geq 2$.
  \end{lemma}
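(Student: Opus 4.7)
My plan is to show that any two vertices $a,c \in \Sc_1(S_g)$ are joined in $\CC_1(S_g)$ by a path of length at most $3$, using a direct construction with linear algebra on $H_1(S_g;\Z/2)$. The key algebraic input is that $\hat{i}_2(\alpha,\alpha) = 0$ for every $\alpha \in H_1(S_g;\Z/2)$, so the condition $a,c \in \Sc_1(S_g)$ forces $[a] \neq [\beta]$ and $[c] \neq [\beta]$; of course $[\beta] \neq 0$ by hypothesis. The geometric input I would cite is the standard fact that for a compact orientable subsurface $\Sigma \subset S_g$, the image of $H_1(\Sigma;\Z/2) \to H_1(S_g;\Z/2)$ equals the annihilator of $[\partial \Sigma]$ under $\hat{i}_2$, and every non-zero class in this image is represented by a simple closed curve in $\Sigma$.

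I split into two cases. First, if $[\beta] \neq [a] + [c]$ in $H_1(S_g;\Z/2)$, then $[\beta] \notin \spam([a],[c])$, so the three functionals $\hat{i}_2(\cdot,[a]),\hat{i}_2(\cdot,[c]),\hat{i}_2(\cdot,[\beta])$ on $H_1(S_g;\Z/2)$ are linearly independent, and the affine subspace
\[
A := \{[b] \in H_1(S_g;\Z/2) : \hat{i}_2([b],[a]) = \hat{i}_2([b],[c]) = 0, \ \hat{i}_2([b],[\beta]) = 1\}
\]
is non-empty. Picking $[b] \in A$, I realize it as a simple closed curve $b$ lying in $S_g \setminus N(a \cup c)$; then $b \in \Sc_1(S_g)$ is disjoint from both $a$ and $c$, giving a length-$2$ path $(a,b,c)$ in $\CC_1(S_g)$.

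Second, in the remaining case $[\beta] = [a] + [c]$, the set $A$ is empty (any class orthogonal to both $[a]$ and $[c]$ is automatically orthogonal to $[\beta]=[a]+[c]$), so I must insert an intermediate curve. The affine subspace $\{\alpha : \hat{i}_2(\alpha,[a]) = 0, \ \hat{i}_2(\alpha,[\beta]) = 1\}$ has cardinality $2^{2g-2} \geq 4$ for $g \geq 2$ and contains $[a]$ itself, so it has at least three further elements; I pick $[a'] \neq [a]$ in it and realize $[a']$ as a simple closed curve in $S_g \setminus a$, which has genus $g-1 \geq 1$. Since $[a'] \neq [a] = [\beta] + [c]$, we have $[a'] + [c] \neq [\beta]$, so the first case applies to the pair $(a',c)$, yielding $b \in \Sc_1(S_g)$ disjoint from both. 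The resulting path $(a,a',b,c)$ in $\CC_1(S_g)$ has length $3$.

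The main obstacle is the second case: the naive two-step construction fails exactly when $[\beta] = [a] + [c]$, and one must arrange an intermediate curve whose homology class breaks this degeneracy. The hypothesis $g \geq 2$ is used essentially here, both through the inequality $2^{2g-2} \geq 4$ and through the positive genus of $S_g \setminus a$; both bounds are tight at $g = 2$.
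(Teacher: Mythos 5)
Your reduction to a bounded-length path hides the actual difficulty of the lemma, and the key geometric step is not correct. The ``standard fact'' you invoke is false as stated: the image of $H_1(\Sigma;\Z/2)\to H_1(S_g;\Z/2)$ for a compact subsurface $\Sigma$ is \emph{not} the annihilator of $[\partial\Sigma]$; it is the annihilator of the image of $H_1$ of the complementary subsurface. (Already for a one-holed torus $\Sigma\subset S_2$ the boundary is null-homologous, so its annihilator is all of $H_1(S_2;\Z/2)$, while the image of $H_1(\Sigma)$ has dimension $2$.) With the correct statement your Case 1 still breaks down: to realize $[b]\in A$ by a curve in $S_g\setminus N(a\cup c)$ you need $[b]$ to annihilate the full image of $H_1(N(a\cup c))$, and when $i(a,c)=k\ge 2$ the neighborhood $N(a\cup c)$ retracts to a $4$-valent graph whose image in $H_1(S_g;\Z/2)$ can have rank up to $k+1$, strictly larger than $\spann([a],[c])$. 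Worse, the conclusion you are aiming for in Case 1 is simply false in general: if $a\cup c$ fills $S_g$ there is \emph{no} essential curve disjoint from both, hence no path of length $2$. Such pairs exist with $a,c\in\Sc_1(S_g)$ and $[a]+[c]\neq[\beta]$: take $a\in\Sc_1(S_g)$ and $c=f^N(a)$ for a pseudo-Anosov $f$ in the Torelli group and $N$ large, so that $[c]=[a]$, $[a]+[c]=0\neq[\beta]$, and $a,c$ fill. Since Case 2 is reduced to Case 1, the whole argument collapses. (A smaller slip: when $[a]=[c]$ the three functionals $\hat{i}_2(\cdot,[a]),\hat{i}_2(\cdot,[c]),\hat{i}_2(\cdot,[\beta])$ are not linearly independent, though the affine system is still solvable there.)

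The part of your construction that is sound is exactly the case $i(a,c)=0$ (and essentially $i(a,c)=1$), where $N(a\cup c)$ retracts to $a\cup c$ and the homological bookkeeping does produce a connecting vertex in $\Sc_1(S_g)$. What is missing is any mechanism for handling pairs with large geometric intersection, and that is the real content of the lemma. The paper deals with it by induction on $i(a,c)$: for $i(a,c)\ge 2$ one performs the standard surgery on two consecutive intersection points (as in the connectivity proof for the usual curve complex), and the extra work is to check that one of the two surgered curves both meets a representative of $[\beta]$ an odd number of times (so it stays in $\Sc_1(S_g)$) and has smaller intersection with $a$ and $c$. Some induction or surgery step of this kind is unavoidable; a purely homological argument cannot see geometric intersection numbers, which is precisely what obstructs short paths in $\CC_1(S_g)$.
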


  \begin{proof}
  Let $a, c \in \Sc_1(S_g)$. We proceed by induction on $i(a,c)$ , the case $i(a,c) = 0$ being clear. For $i(a,c) =1$, let
  $\alpha$ and $\gamma$ be representatives of $a,c$ in minimal position. It follows
  that $\alpha$ and $\gamma$ are part of a geometric symplectic basis $\nu$ for $H_1(S_g,\Z)$. Thus, there exist a multicurve
  representative of $[\beta]$ intersecting $\alpha$ and $\gamma$ only once. If $[a] + [c] = [\beta]$, then there is
  a curve $\delta$ with isotopy class $d$, with the following properties:
  \begin{enumerate}
  \item $\alpha \cap \delta = \emptyset$.
  \item $[d] + [c] \not=[\beta]$.
  \item $i(c,d) = 1$.
  \end{enumerate}
  Indeed, $\delta$ can be found by applying
  the change of coordinates principle~\cite[Ch 1.3]{FM}
  \footnote{The main idea is that, thanks to the classification of surfaces,
    up to conjugation with a diffeomorphism, we can change to a \emph{standard} picture, where it is easier to find representatives.}.
  Hence it is enough to assume that $[a] + [c] \not= [\beta]$. In this
  case there is a component of $[\beta]$ intersecting one of the other curves in the basis $\nu$, say $\gamma'$, once. The isotopy class of $\gamma'$ provides the path between $a$ and $c$ in $\CC_1(S_g)$.

  Now assume $i(a,c) \geq 2$, and let $\alpha,\gamma$ be as above. As before, there is a representative $\beta$ of $[\beta]$
  intersecting $\gamma$ only once and intersecting $\alpha$ transversely. Take two consecutive intersection points of
  $\gamma$ and $\alpha$. There are two cases,
  depending on the orientation at the intersections:

\begin{figure}[H]
  \begin{center}
    \def \svgwidth{1\textwidth}
    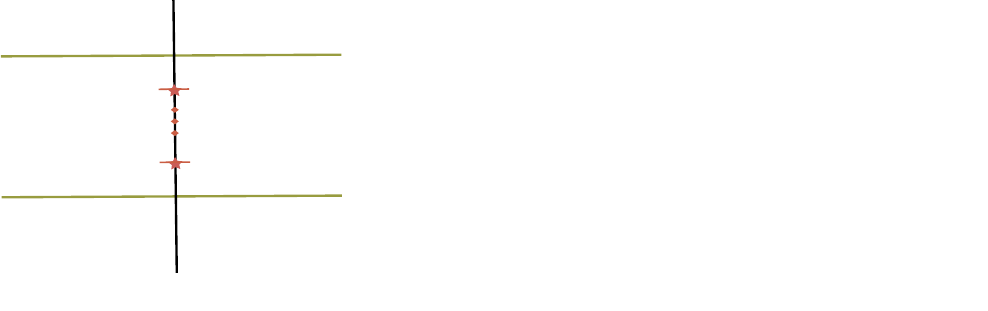
  \end{center}
  \caption{Path surgery.}\label{fig:path_surgery}
\end{figure}

  In either case, let $\gamma_1$ and $\gamma_2$ be SCCs constructed by the surgery described \Cref{fig:path_surgery}. As these curves travel parallel to
  $\gamma$ and the union gives all of $\gamma$ outside the neighboorhood of $\alpha$ depicted above, only one of
  the curves crosses $\beta$ along $\gamma$, say it is $\gamma_1$.
  Furthermore, $\gamma_1$ and $\gamma_2$ have a segment parallel to $\alpha$, and this segment will meet $\beta$
  in either an even or odd number of points. Depending on the parity, either $\gamma_1$ (even case) or
  $\gamma_2$ (odd case) will meet $\beta$ an odd number of times, and intersect both $\alpha$ and $\gamma$ in fewer than
  $i(a,c)$ points. By induction, there is a path between $a$ and $c$ and the claim follows.
\end{proof}

Next, define $\Nc\CC_1(S_g)$, to be the 1-complex with vertex set $\Sc_1(S_g)$ and where two classes
$a,c$ in $\Sc_1(S_g)$ are connected by an edge if and only if $i(a,c) =1$.
\begin{lemma}\label{lemma:1_complex}
  For $g\geq 2$, $\Nc\CC_1(S_g)$ is connected.
\end{lemma}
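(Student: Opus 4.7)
The plan is to leverage the connectivity of $\CC_1(S_g)$ established in \Cref{lemma:disjoint_complex}. Since $\CC_1(S_g)$ and $\Nc\CC_1(S_g)$ share the same vertex set $\Sc_1(S_g)$, it suffices to prove that every edge of $\CC_1(S_g)$ can be replaced by an edge-path in $\Nc\CC_1(S_g)$. Concretely, for each pair of distinct, disjoint classes $a, c \in \Sc_1(S_g)$ I need to produce a $d \in \Sc_1(S_g)$ with $i(a,d) = i(c,d) = 1$; then $a - d - c$ is the desired length-two path.

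To construct $d$ I first build a SCC $d_0$ meeting $a$ and $c$ once each, ignoring the $\beta$-parity. By the change of coordinates principle, the homeomorphism type of $(S_g, a \cup c)$ is determined by whether $a \cup c$ separates $S_g$. In the nonseparating case, cutting along $a \cup c$ yields a connected surface of genus $g - 2 \geq 0$ with four boundary circles (two from each of $a, c$); a pair of disjoint arcs matching the $a$-sides to the $c$-sides reglues to an SCC $d_0$ with $i(a,d_0) = i(c,d_0) = 1$, and this can be verified in a standard model. In the separating case, $[a] = [c]$ in $H_1(S_g, \Z)$ and $S_g \setminus (a \cup c)$ has two components, each bounded by both $a$ and $c$; since $a \neq c$ as isotopy classes, neither component is an annulus, so each has genus at least $1$, and one may join $a$ to $c$ by an embedded arc in each component and reglue to obtain $d_0$.

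To correct the $\beta$-parity, set $d = d_0$ if $\hat{i}_2([d_0],[\beta]) = 1$, and $d = T_a(d_0)$ otherwise. Because $T_a$ fixes $a$ and (since $a \cap c = \emptyset$) also fixes $c$, the geometric intersections $i(a,d) = i(c,d) = 1$ persist. In $H_1(S_g,\Z/2\Z)$ we have $[T_a(d_0)] = [d_0] + [a]$, and $\hat{i}_2([a],[\beta]) = 1$, so the $\beta$-parity flips and $d \in \Sc_1(S_g)$. The main delicate point in the proof is ruling out the annular degeneracy in the separating subcase—an annular component of $S_g \setminus (a\cup c)$ would force $a = c$—but once this is observed, the construction goes through uniformly for all $g \geq 2$, which completes the argument.
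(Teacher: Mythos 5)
Your proof is correct, and it follows the same skeleton as the paper's: reduce, via the connectivity of $\CC_1(S_g)$ from \Cref{lemma:disjoint_complex}, to producing for each disjoint pair $a,c \in \Sc_1(S_g)$ a middle vertex $d \in \Sc_1(S_g)$ with $i(a,d)=i(c,d)=1$, splitting into the cases $a\cup c$ nonseparating and $a\cup c$ a bounding pair. Where you genuinely diverge is in how the $\beta$-parity of the connecting curve is arranged: the paper builds a parity-correct $\delta$ directly, extending $\alpha,\gamma$ to a geometric symplectic basis, taking the standard multicurve representative of $[\beta]$ as local representatives, and joining suitably twisted dual curves $T_\alpha^{k'}(\alpha')$, $T_\gamma^{j}(\gamma')$; you instead first produce any $d_0$ meeting each of $a,c$ once (pure change of coordinates, with no reference to $[\beta]$) and then correct the parity by replacing $d_0$ with $T_a(d_0)$, using $i(a,c)=0$ to preserve both geometric intersection numbers and $\hat{i}_2([a],[\beta])=1$ to flip $\hat{i}_2([d_0],[\beta])$. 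This twist trick is cleaner and more robust than the paper's explicit bookkeeping with local representatives of $[\beta]$, while the paper's construction has the advantage of staying within the toolkit (symplectic bases plus multicurve representatives of $[\beta]$) reused in the surrounding lemmas. One small remark: your worry about the annular component in the separating case is unnecessary — an embedded arc joining the two boundary circles exists in any connected complementary piece, annulus or not (and for distinct classes the annular case simply does not occur) — but this extra caution does not affect the validity of the argument.
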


\begin{proof}

  Let $a, c \in \Sc_1(S_g)$, by \Cref{lemma:disjoint_complex}, we can assume $i(a,c) = 0$. Thus, it is enough to show that there is a class
  $d \in \Sc_1(S_g)$ such that $i(a,d) = i(d,c) = 1$. There exist representatives $\alpha$ and $\gamma$ of $a$ and $c$,
  with $\alpha \cap \gamma =
  \emptyset$. To find such a curve $d$, there are two cases to consider. If $\alpha \cup \gamma$ is nonseparating, by the change of coordinates, there is a  curve $\delta$
  intersecting both $\alpha$ and $\gamma$ once, and intersecting a (multicurve) representative of $[\beta]$ an odd number of times.
  Indeed, just note that $\alpha$ and $\gamma$ can be extended to a geometric symplectic basis $\nu$ for $S_g$.
  Let $\alpha'$ and $\gamma'$ be the curves intersecting $\alpha$ and $\gamma$ once respectively. The multicurve
  representative of $[\beta]$ is given by a union of $g$ curves $\beta_i$ around each torus neighborhood of a pair $\{\alpha_i,\alpha_i'\}$
  of $\nu$ with $i(\alpha_i,\alpha_i') = 1$. Call each such curve $\beta_i$ a \emph{local representative} for $[\beta]$.
  Thus local
  representatives of $[\beta]$ around $\{\alpha,\alpha'\}$ and $\{\gamma, \gamma'\}$ are given by
  $T_{\alpha}^k(\alpha')$ and $T_{\gamma}^j(\gamma')$, where $k,j \in \{0,1\}$
  depend on $[\beta]$ intersecting $\alpha'$ or $\gamma'$. Define $\delta$ by
  connecting $T_{\alpha}^{k'}(\alpha')$ and $T^{j}_{\gamma}(\gamma')$, where $k' \in \{0,1\}$ satisfy $k' = k + 1\mod 2$.

 If  $\alpha \cup \gamma$ is separating, then $\{a,c\}$ is a
 bounding pair, i.e. they bound a pair of subsurfaces of genus $g_1,g_2$ each with two boundary
 components. Applying the change of coordinates principle, there is
 a $d$ with $i(a,d) = 1 = i(a,c)$ and $d \in \Sc_1(S_g)$.
\end{proof}
\begin{proof}[Proof of \Cref{lemma:complex_con}] The goal is to modify the path given by \Cref{lemma:1_complex}
  to conclude the proof. It is enough to
show that if $a,c \in \Sc_1(S_g)$, with $i(a,c) =1$, then there are $b_1,b_2 \in \Sc_1(S_g)$ so
that $i(a,b_1) = i(b_1,b_2) = i(c,b_2) =1$ and whose pairwise sum, e.g. $[a] + [b_1]$, in $H_1(S_g,\Z/2\Z)$ is not $[\beta]$. In particular if $[a] + [c] \not= [\beta]$ then we are done.\\
Assume then that $[a] + [c] = [\beta]$. \Cref{fig:refining_path} shows the curves $b_1,b_2$.
\begin{figure}[H]
  \begin{center}
    %% Creator: Inkscape inkscape 0.92.5, www.inkscape.org
%% PDF/EPS/PS + LaTeX output extension by Johan Engelen, 2010
%% Accompanies image file '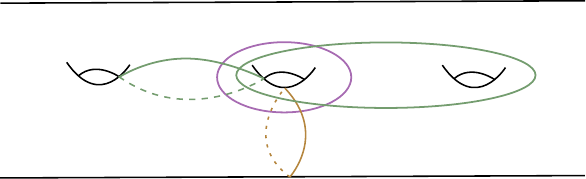' (pdf, eps, ps)
%%
%% To include the image in your LaTeX document, write
%%   \input{<filename>.pdf_tex}
%%  instead of
%%   \includegraphics{<filename>.pdf}
%% To scale the image, write
%%   \def\svgwidth{<desired width>}
%%   \input{<filename>.pdf_tex}
%%  instead of
%%   \includegraphics[width=<desired width>]{<filename>.pdf}
%%
%% Images with a different path to the parent latex file can
%% be accessed with the `import' package (which may need to be
%% installed) using
%%   \usepackage{import}
%% in the preamble, and then including the image with
%%   \import{<path to file>}{<filename>.pdf_tex}
%% Alternatively, one can specify
%%   \graphicspath{{<path to file>/}}
%% 
%% For more information, please see info/svg-inkscape on CTAN:
%%   http://tug.ctan.org/tex-archive/info/svg-inkscape
%%
\begingroup%
  \makeatletter%
  \providecommand\color[2][]{%
    \errmessage{(Inkscape) Color is used for the text in Inkscape, but the package 'color.sty' is not loaded}%
    \renewcommand\color[2][]{}%
  }%
  \providecommand\transparent[1]{%
    \errmessage{(Inkscape) Transparency is used (non-zero) for the text in Inkscape, but the package 'transparent.sty' is not loaded}%
    \renewcommand\transparent[1]{}%
  }%
  \providecommand\rotatebox[2]{#2}%
  \newcommand*\fsize{\dimexpr\f@size pt\relax}%
  \newcommand*\lineheight[1]{\fontsize{\fsize}{#1\fsize}\selectfont}%
  \ifx\svgwidth\undefined%
    \setlength{\unitlength}{280.88855413bp}%
    \ifx\svgscale\undefined%
      \relax%
    \else%
      \setlength{\unitlength}{\unitlength * \real{\svgscale}}%
    \fi%
  \else%
    \setlength{\unitlength}{\svgwidth}%
  \fi%
  \global\let\svgwidth\undefined%
  \global\let\svgscale\undefined%
  \makeatother%
  \begin{picture}(1,0.30543153)%
    \lineheight{1}%
    \setlength\tabcolsep{0pt}%
    \put(0,0){\includegraphics[width=\unitlength,page=1]{figure_2_paper.pdf}}%
    \put(0.36423675,0.2479813){\color[rgb]{0,0,0}\makebox(0,0)[lt]{\lineheight{1.25}\smash{\begin{tabular}[t]{l}$c$\end{tabular}}}}%
    \put(0.54554675,0.05932087){\color[rgb]{0,0,0}\makebox(0,0)[lt]{\lineheight{1.25}\smash{\begin{tabular}[t]{l}$a$\end{tabular}}}}%
    \put(0.81751182,0.24675625){\color[rgb]{0,0,0}\makebox(0,0)[lt]{\lineheight{1.25}\smash{\begin{tabular}[t]{l}$b_1$\end{tabular}}}}%
    \put(0.15842534,0.1009732){\color[rgb]{0,0,0}\makebox(0,0)[lt]{\lineheight{1.25}\smash{\begin{tabular}[t]{l}$b_2$\end{tabular}}}}%
  \end{picture}%
\endgroup%

  \end{center}
  \caption{Refining the path in $\Nc\CC_1(S_g)$ to lie on $\Nc_1(S_g)$.}\label{fig:refining_path}
\end{figure}

\end{proof}

Our next result characterizes the action of $\Mbeta$ on $\Nc_1(S_g)$, and thus gives us a new generating set
for $\Mbeta$.
\begin{lemma}[\bf{Generating set-Stabilizer}]\label{lemma:gen_pi1(Rg)}
  Let $g \geq 3$ and $[\beta]$ a nonzero class in $H_1(S_g,\Z/2\Z)$. $\Mbeta$ acts transitively on $\Sc_1(S_g)$ and on pairs of edges $(a_i,c_i)$ of $\Nc_1(S_g)$.
  In particular, for any $a \in \Sc_1(S_g)$, $\Mbeta$ is generated by the stabilizer of $a$ in $\Mbeta$ and
  any $h \in \Mbeta$, so that $(a,h^{-1}(a)) \in \Nc_1(S_g)$.
\end{lemma}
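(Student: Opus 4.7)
The plan is to prove three things in sequence: (i) transitivity of $\Mbeta$ on $\Sc_1(S_g)$, (ii) transitivity of $\Mbeta$ on ordered edges of $\Nc_1(S_g)$, and (iii) a Schreier-type argument combining (i), (ii) and \Cref{lemma:complex_con} to extract the claimed generating set.

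Item (i) is immediate from case (1) of the unlabeled lemma preceding \Cref{cor:conj_Mbeta}: every $a \in \Sc_1(S_g)$ satisfies $\hat{i}_2([a],[\beta]) = 1$, so any two such classes are $\Mbeta$-equivalent. For (ii), given ordered edges $(a_j, c_j)$, $j = 1, 2$, use (i) to reduce to $a_1 = a_2 =: a$, so that it suffices to find $g \in \Stab_{\Mbeta}(a)$ with $g(c_1) = c_2$. I would apply the change of coordinates principle to the triples $(a, c_j, b_j)$, where $b_j$ is a simple closed curve representative of $[\beta]$ meeting both $a$ and $c_j$ exactly once (such $b_j$ exist because $\hat{i}_2([a],[\beta]) = \hat{i}_2([c_j],[\beta]) = 1$, and can be arranged along the lines of the footnote in the previous subsection). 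The conditions $i(a,c_j) = 1$ and $[a] + [c_j] \ne [\beta]$ force $[a], [c_j], [\beta]$ to be linearly independent in $H_1(S_g,\Z/2\Z)$, which pins down the topological type of the configuration, specifically the homeomorphism types of $N(a \cup c_j \cup b_j)$ and of its complement in $S_g$. Change of coordinates then yields an orientation-preserving homeomorphism of $S_g$ sending $(a, c_1, b_1)$ to $(a, c_2, b_2)$; this homeomorphism fixes $a$ setwise, preserves $[\beta]$ (since $b_1, b_2$ both represent $[\beta]$), and sends $c_1 \to c_2$, so provides the desired $g$.

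For (iii), set $H := \langle \Stab_{\Mbeta}(a), h \rangle$. For $f \in \Mbeta$, \Cref{lemma:complex_con} produces a path $a = v_0, v_1, \ldots, v_n = f(a)$ in $\Nc_1(S_g)$. Inductively, assuming $v_{i-1} = g_{i-1}(a)$ with $g_{i-1} \in H$, the vertex $g_{i-1}^{-1}(v_i)$ is adjacent to $a$, so by (ii) applied to the ordered edges $(a, h(a))$ and $(a, g_{i-1}^{-1}(v_i))$ there exists $s_i \in \Stab_{\Mbeta}(a)$ with $s_i h(a) = g_{i-1}^{-1}(v_i)$; set $g_i := g_{i-1} s_i h \in H$, so that $g_i(a) = v_i$. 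Taking $i = n$, we get $g_n(a) = f(a)$, hence $f^{-1} g_n \in \Stab_{\Mbeta}(a) \subseteq H$, whence $f \in H$.

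The main obstacle I expect is carrying out the change-of-coordinates argument in (ii) rigorously: one must check that the condition $[a] + [c] \ne [\beta]$ excludes the degenerate topological types in which $b$ would be forced (for homological reasons) to meet $a$ or $c$ more than once, or in which $b$ would separate $a$ from $c$, so that a single generic topological type of triple $(a, c, b)$ remains and change of coordinates applies uniformly.
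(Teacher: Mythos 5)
Your steps (i) and (iii) are fine: (i) is exactly the paper's unlabeled transitivity lemma, and (iii) is the standard argument (the paper simply cites Lemma 4.10 of \cite{FM} at this point), correctly adapted to $\Nc_1(S_g)$ since $\Mbeta$ preserves both $i(\cdot,\cdot)$ and the condition $[a]+[c]\not=[\beta]$.

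The gap is in step (ii), at precisely the point you flag: the claim that linear independence of $[a],[c_j],[\beta]$ ``pins down the topological type of the configuration, specifically the homeomorphism types of $N(a\cup c_j\cup b_j)$ and of its complement'' is not justified, and for arbitrary admissible choices of $b_j$ it is false. The change of coordinates principle applies to configurations of a fixed topological type, and the type of a triple of curves pairwise intersecting once is \emph{not} determined by the mod $2$ homology classes: with $(a,c_j)$ fixed, one representative $b_j$ of $[\beta]$ meeting each of $a,c_j$ once can be taken ``tight'' (a band sum of the diagonal in the torus neighborhood $T_j$ of $a\cup c_j$ with a curve in the complementary piece $P_j$), while another can be chosen to wander arbitrarily in $P_j$ --- for instance so that $a\cup c_j\cup b_j$ fills $S_g$ --- and these give complements of different homeomorphism types (unions of disks versus pieces of positive genus). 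So there is no single topological type of triple to which change of coordinates can be applied uniformly, and the homological bookkeeping you propose does not repair this. You could salvage your route by \emph{standardizing} the choice of $b_j$ (diagonal in $T_j$ band-summed with a nonseparating curve in $P_j$ realizing $[\beta]+[a]+[c_j]$, which is nonzero exactly because of the edge condition) and then matching the two standardized triples by first sending $\partial T_1$ to $\partial T_2$ and correcting separately inside $T_2$ and inside $P_2$; but carrying that out is essentially the paper's proof. The paper avoids the triple-configuration issue altogether: it represents $[\beta]$ by a \emph{multicurve} $\{\beta_1^i,\beta_2^i\}$ adapted to $\delta_i=\partial T_i$, maps $\delta_1$ to $\delta_2$ by change of coordinates for a single separating curve, uses transitivity of $\Mod(T_2)$ on $2$-chains to match $(a_1,c_1)\mapsto(a_2,c_2)$, and on the complementary side matches only the homology class $[\beta_2^1]\mapsto[\beta_2^2]$ via surjectivity of the mod $2$ symplectic representation of $\Mod(P_2)$ --- a strictly weaker matching requirement than producing a homeomorphism of triples of curves, and the place where the hypothesis $[a_i]+[c_i]\not=[\beta]$ (equivalently $[\beta_2^i]\not=0$) is actually used.
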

\begin{proof}
Let $\alpha_i,\gamma_i$ be representatives for $a_i,c_i$ in minimal position, and let $\delta_i$ be the boundary curve
  of the closed torus neighborhood $T_i$ of $\alpha_i \cup \gamma_i$. Let $P_i$ be the complementary subsurface bounded by
  $\delta_i$. By assumption, there exist multicurve representatives
  $\{\beta_1^i,\beta_2^i\}$ of $[\beta]$, with $\beta_1^i \subset T_i$ and $\beta_2^i \subset P_i$, furthermore $\beta_1^i$ intersects both $\alpha_i$
  and $\gamma_i$ only once. Let $f \in \Mod(S_g)$ with $f(\delta_1) = \delta_2$, and inducing
  homeomorphisms $f_T:T_1 \to T_2$ and $f_P:P_1
  \to P_2$. As the symplectic representation mod 2 is surjective, there exists $g_P \in \Mod(P_2)$ so that
  $(g_Pf_P)[\beta_2^1] = [\beta_2^2]$. On the other hand, note that $f_T$ maps $(\alpha_1,\gamma_1)$ to a $2$-chain in
  $T_2$. Thus, as $\Mod(T_2)$ acts transitively on $2$-chains, there is $g_T \in \Mod(T_2)$ such that $g_Tf_T(\alpha_1) = \alpha_2$ and $g_Tf_T(\gamma_1) = \gamma_2$. It
  follows that $g_T f_T$ maps $\beta_1^i$ to a curve intersecting $\alpha_2$ and $\gamma_2$ only once each, and so
  $g_Tf_T[\beta_1^1] = [\beta_1^2]$. The first claim follows by composing $f$ with the extensions of $g_T$ and $g_P$.

  Let $a \in \Sc_1(S_g)$ and $h \in \Mbeta$ so that $a$ and $h^{-1}(a)$ are connected by an edge in $\Nc_1(S_g)$. Then, the hypothesis
  of Lemma 4.10 of \cite{FM} are satisfied and the second claim follows.
\end{proof}

%%% Local Variables:
%%% mode: latex
%%% TeX-master: "global_rigidity_prym"
%%% End:

\subsection{Low-dimensional representations of $\Mbeta$}
The interplay between the two generating sets of $\Mbeta$ found in \Cref{subsec:complex-curves,subsec:conj} allows
us to conclude all but the last item of \Cref{theo:rigidity_group_d}.
\begin{proof}[Proof of (1)-\Cref{theo:rigidity_group_d}]
  Represent $[\beta]$ by a simple closed curve $b$, and let $a$ be a simple closed curve intersecting $b$
  transversely at one point. Let $R$ be the complement of an open annular neighborhood of $b$,
  then $R \cong S_{g-1}^2$ (A genus $g-1$ surface with \emph{two} boundary components), via the inclusion
  $R \hookrightarrow S_{g-1}$ there is a map
  $\Mod(R) \to \Mbeta$ and $\phi$ induces a representation $\phi_R:\Mod(R) \to \GL(m,\C)$.

  We claim that $\phi_R$ is trivial. For $m < 2g-2$ this follows from the results of Franks-Handel~\cite[Theorem 1]{Franks-Handel}, as the genus of $R$
  is at least $3$. Similarly for $m = 2g-2$, by Korkmaz~\cite[Theorem 2]{korkmaz}, $\phi_R$ is either trivial or conjugate to the standard symplectic
  representation $\psi:\Mod(R) \to \Sp(2g-2,\Z)$. Note that in either case, $\phi(T_b) = 1$ as $b$
  is separating in $R$. Let $d$ be the boundary of a regular neighborhood of $a \cup b$.
  Via the $2$-chain relation~(see \cite[Prop 4.12]{FM}),
  \[ \phi(T_a^2T_b)^4 = \phi(T_d) = 1,\]
  as $d \in R$ is separating. Thus,
  regardless of $\phi_R$, $\phi(T_a^2)$ is of order at most $4$ and by conjugation the same applies to any
  $\phi(T_{a'}^2)$ with $\hat{i}_2([a'],[\beta])=1$.

  Now suppose that $\phi_R$ is not trivial, then after conjugating $\phi$
  we can assume $\phi_R = \psi$. Consider two $k_i$-chains to each side of $b$ with $k_i$ odd, as in \Cref{fig:chain_conj}.
  \begin{figure}[H]
    \begin{center}
        \def \svgwidth{.9\textwidth}
    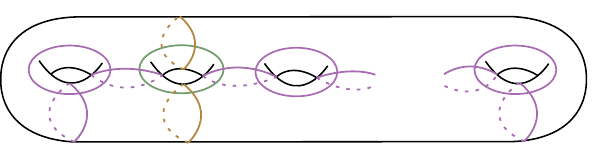
  \end{center}
  \caption{Complementary $k$-chains around $b$.}\label{fig:chain_conj}
\end{figure}

  Then, the $k$-chain relations~\cite[Prop 4.12]{FM} imply that:
  \[ (T_a^2T_{c_1}T_{c_2})^3 = (T_{a'}^2T_{c_3}\ldots T_{c_{2g-2}})^{2g-3} \]
  Let $\tilde{R} \subset R$, be the complement of a torus neighbohood of $d_1 \cup b$, then
  \[ \phi(\Mod(\tilde{R})) =  \Sp(2g-2,\Z). \]
  As $T_{d_1}^2$ commutes with any $f \in \Mod(\tilde{R})$, we find that $\phi(T_{d_1}^2) = \lambda \Id$ for some $\lambda \in \C^*$ and
  $\lambda^4 = 1$. By conjugation, $\phi(T_a^2) = \phi(T_{d_1}^2)$ for any $a$ with $\hat{i}_2([a],[\beta]) =1$.
  Furthermore, by assumption $\lambda^{2g-2} = 1$ for any $g$~(here we need the extra condition, $\Ima(\phi) \subset \SL(m,\C)$,
 for $g$ even).

  The $k$-chain relations, under $\phi$, induce the relation,
  \[ (\psi(T_{c_1})\psi(T_{c_2}))^3 = \lambda^{2g-2}(\psi(T_{c_3})\ldots \psi(T_{c_{2g-2}}))^{2g-3} \]

  A direct computation shows that $(\psi(T_{c_1})\psi(T_{c_2}))^3$ acts as $-\Id$ on $\{[c_1],[c_2]\}$,
  while any $\psi(T_{c_i})$ for $i > 2$ acts trivially, hence we reach a contradiction.

  Consequently, $\phi_R$ is trivial and so $\phi(T_c) = 1$ for any $c$ with $\hat{i}_2([c],[\beta]) = 0$.
  Let $c$ be a nonseparating SCC in $R$, so in particular $\phi(T_c) = 1$, meeting $a$ transversely at one point. Let $v = [T_c(a)]$, then
  $\hat{i}_2(v,[\beta]) =1$ and $v + [a] \not=[\beta]$. By \Cref{lemma:gen_pi1(Rg)},
  $\Mbeta$ is generated by $T^{-1}_c\in \Mod(R)$
  and the stabilizer of $a$. Hence, for any element $f \in \Mbeta)$, $\phi(f)$
  commutes with $\phi(T^2_a) = L_a$. For any other curve $a'$ with $\hat{i}_2([a'],[\beta])
  = 1$, $T^2_{a'}$ is conjugate to $T_a^2$ in $\Mbeta$.
  Thus, $\phi(T^2_{a'}) = L_a$ for all such $a'$. By \Cref{lemma:gen_dehn_pi1(Rg)}, $\phi(\Mbeta) = \dprod{L_a}$ and
  the theorem follows.
\end{proof}

%%% Local Variables:
%%% mode: latex
%%% TeX-master: "global_rigidity_prym"
%%% End:

\subsection{Lifting relations to $\Msigma$}\label{subsec:rel-msigma}

Let $\rho:S_{2g-1} \to S_g$ be the double cover with deck transform $\sigma$ and with monodromy
given by intersection with
$[\beta] \in H_1(S_g,\Z/2\Z)^*$.
By choosing lifts of elements of $\Mbeta$ to $\Msigma$, we can translate the results of \Cref{subsec:conj,subsec:complex-curves}
to $\Msigma$.

Dehn twists have distinguished lifts to $\Msigma$: let $a$ be an isotopy class of simple closed
curves in $S_g$. If $\hat{i}_2([a],[\beta]) = 0$, then $a$ has two disjoint (as $a$ is simple) and nonisotopic
lifts $\tilde{a}$ and $\sigma(\tilde{a})$ to $S_{2g-1}$. In this case, a lift of $T_a$ is given by the
\emph{multitwist} $T_{\tilde{a}}T_{\sigma(\tilde{a})} \in \Msigma$. Similarly, if $\hat{i}_2([a],[\beta]) =1$ let $\tilde{a}$
be the union (in any order) of the two simple paths lifting $a$ to $S_{2g-1}$. Then, a lift of
$T_a^2$ is given by $T_{\tilde{a}}$. The way we join both lifts of $a$ does not affect the lift as
both ways give isotopic loops. Furthermore, as $\sigma$ permutes the lifts of $a$, then $T_{\tilde{a}} \in \Msigma$.\\
With this notation, by lifting mapping classes from $\Mbeta$ to the cover $\Msigma$, we obtain the following.
\begin{cor}[\bf{Conjugation in $\Msigma$}]\label{cor:conj_Msigma}
  Let $a_1,a_2$ be a pair of isotopy classes of nonseparating simple closed curves
  in $S_g$.
  \begin{enumerate}
  \item If $\hat{i}_2([a_1],[\beta]) = \hat{i}_2([a_2],[\beta]) = 1$, then $T_{\tilde{a}_1}$ and $T_{\tilde{a}_2}$
    are conjugate in $\Msigma$.
  \item If $\hat{i}_2([a_1],[\beta]) = \hat{i}_2([a_2],[\beta]) = 0$, and either for each $i$
    $[a_i] \not=[\beta]$ or for each $i$ $[a_i] = [\beta]$, then $T_{\tilde{a}_1}T_{\sigma(\tilde{a}_1)}$ and
    $T_{\tilde{a}_2}T_{\sigma(\tilde{a}_2)}$ are conjugate in $\Msigma$.
  \end{enumerate}
\end{cor}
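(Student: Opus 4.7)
The plan is to lift the conjugation relations from \Cref{cor:conj_Mbeta} to $\Msigma$ via the short exact sequence
\[ 1 \to \dprod{\sigma} \to \Msigma \to \Mbeta \to 1, \]
using the fact that the two different types of lifts $\tilde a$ are canonically determined by $a$ (up to the $\sigma$-action), so any mapping class in $\Mbeta$ taking $a_1$ to $a_2$ lifts to a mapping class in $\Msigma$ carrying preimages to preimages.

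First I will handle case (1). By \Cref{cor:conj_Mbeta}, there is $f \in \Mbeta$ with $f T_{a_1}^2 f^{-1} = T_{a_2}^2$; in particular $f(a_1) = a_2$. Pick any lift $\tilde f \in \Msigma$ of $f$. Since $\hat i_2([a_i],[\beta]) = 1$, the preimage $\rho^{-1}(a_i)$ is a single connected curve, namely $\tilde a_i$, and this curve is preserved set-wise by $\sigma$. Because $\tilde f$ covers $f$, it must send the preimage of $a_1$ to the preimage of $a_2$, i.e.\ $\tilde f(\tilde a_1) = \tilde a_2$ as isotopy classes of simple closed curves on $S_{2g-1}$. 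Consequently,
\[ \tilde f\, T_{\tilde a_1}\, \tilde f^{-1} = T_{\tilde a_2}, \]
which is the first claim.

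For case (2), the same argument applies with a minor modification. Again choose $f \in \Mbeta$ with $f(a_1) = a_2$ coming from \Cref{cor:conj_Mbeta}, and let $\tilde f \in \Msigma$ be any lift. Because $\hat i_2([a_i],[\beta]) = 0$, the preimage $\rho^{-1}(a_i)$ now consists of two disjoint nonisotopic simple closed curves $\tilde a_i$ and $\sigma(\tilde a_i)$. The lift $\tilde f$ sends $\rho^{-1}(a_1)$ to $\rho^{-1}(a_2)$ as unoriented multicurves, so $\tilde f$ permutes the two components $\{\tilde a_2, \sigma(\tilde a_2)\}$ (possibly after composing with $\sigma$, which exchanges the two components of each preimage). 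Either way, $\tilde f$ maps the (unordered) multicurve $\tilde a_1 \cup \sigma(\tilde a_1)$ onto $\tilde a_2 \cup \sigma(\tilde a_2)$, and since $T_{\tilde a_i}$ and $T_{\sigma(\tilde a_i)}$ commute (their supports are disjoint), we conclude
\[ \tilde f\, (T_{\tilde a_1}T_{\sigma(\tilde a_1)})\, \tilde f^{-1} = T_{\tilde a_2}T_{\sigma(\tilde a_2)}, \]
as desired.

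The only potential subtlety is the possibility of an ambiguity in choosing the lift $\tilde f$, but this is harmless: modifying $\tilde f$ by $\sigma$ does not change the conjugation relation, since in case (1) the curve $\tilde a_i$ is $\sigma$-invariant and in case (2) the multicurve $\tilde a_i \cup \sigma(\tilde a_i)$ is $\sigma$-invariant, and hence the associated Dehn twist $T_{\tilde a_i}$ (resp.\ multi-twist $T_{\tilde a_i}T_{\sigma(\tilde a_i)}$) is centralized by $\sigma$. Thus no extra work is needed beyond transporting the conjugacies from \Cref{cor:conj_Mbeta} through the Birman--Hilden exact sequence.
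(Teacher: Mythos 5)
Your proof is correct and takes the same route as the paper: the paper's entire proof is ``Lift the element $f \in \Mbeta$ such that $f(a_1) = a_2$,'' and your argument simply spells out why the lift carries $\tilde a_1$ (resp.\ the multicurve $\tilde a_1 \cup \sigma(\tilde a_1)$) to $\tilde a_2$ (resp.\ $\tilde a_2 \cup \sigma(\tilde a_2)$) and why the $\sigma$-ambiguity of the lift is harmless.
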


% \begin{proof}
%   Lift the element $f \in \Mbeta$ such that $f(a_1) = a_2$.
% \end{proof}

Similarly, \Cref{lemma:gen_dehn_pi1(Rg),lemma:gen_pi1(Rg)} imply the following results.
\begin{cor}[\bf{Generating set $\Msigma$-twists}]\label{cor:gen_twist_sigma}
Let $g \geq 2$, and $\sigma$ the deck transform of the double cover $p:S_{2g-1} \to S_g$ associated to $[\beta] \in H_1(S_g,\Z/2\Z)^*$.
$\Msigma$ is generated by
\begin{equation*}
  \begin{aligned}
  \{\sigma\} \cup
    \{T_{\tilde{c}}(T_{\sigma(\tilde{c})})^{\xi(c)}: c &\mbox{ nonseparating SCC in $S_g$ and } \xi(c) \in \{0,1\},\\
    &\mbox{ with } \xi(c)= \hat{i}_2([c],[\beta]) + 1 \mod 2\}.
  \end{aligned}
  \end{equation*}
\end{cor}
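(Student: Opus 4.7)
The plan is to combine the short exact sequence
\[ 1 \to \langle \sigma \rangle \to \Msigma \xrightarrow{\pi} \Mbeta \to 1 \]
with the generating set for $\Mbeta$ already obtained in \Cref{lemma:gen_dehn_pi1(Rg)} and with the explicit lifting recipe spelled out in the paragraphs preceding the corollary. The key general fact is that for any group extension $1 \to K \to G \to Q \to 1$, the union of a generating set of $K$ with any collection of $\pi$-preimages of a generating set of $Q$ generates $G$. Since the kernel $\langle\sigma\rangle$ is cyclic of order two, including the single element $\sigma$ suffices to generate $K$, so the entire problem reduces to exhibiting convenient lifts of the generators of $\Mbeta$ furnished by \Cref{lemma:gen_dehn_pi1(Rg)}.

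For each nonseparating simple closed curve $c$ in $S_g$, the lift described in the opening paragraph of \Cref{subsec:rel-msigma} gives, by construction, an element of $\Msigma$ (i.e.\ $\sigma$-commuting) that projects to $T_c^{\xi(c)}$. Concretely, when $\hat{i}_2([c],[\beta]) = 0$ the preimage $p^{-1}(c)$ has two disjoint nonisotopic components $\tilde c$ and $\sigma(\tilde c)$, and the multi-twist $T_{\tilde c} T_{\sigma(\tilde c)}$ is $\sigma$-invariant and descends to $T_c$. When $\hat{i}_2([c],[\beta]) = 1$ the preimage $p^{-1}(c)$ is a single $\sigma$-invariant curve $\tilde c$ (the concatenation of the two lifting arcs), and $T_{\tilde c}$ already lies in $\Msigma$ and descends to $T_c^2$. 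In the notation of the corollary these two cases are packaged uniformly as $T_{\tilde c}(T_{\sigma(\tilde c)})^{\xi(c)}$ with $\xi(c) = \hat{i}_2([c],[\beta]) + 1 \bmod 2$, using the convention that $\sigma(\tilde c) = \tilde c$ in the second case so that $(T_{\sigma(\tilde c)})^0$ is trivially consistent.

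Putting the pieces together, $\{\sigma\}$ together with the collection of elements $T_{\tilde c}(T_{\sigma(\tilde c)})^{\xi(c)}$ projects onto the generating set of $\Mbeta$ of \Cref{lemma:gen_dehn_pi1(Rg)} and supplements it with a generator of $\langle\sigma\rangle$; by the general fact recalled above it generates $\Msigma$. There is no real obstacle here: the content of the corollary is essentially a bookkeeping statement, and all geometric input (the fact that these multi-twists and single twists actually lie in $\Msigma$ and project to the expected Dehn twists in $\Mbeta$) is the Birman--Hilden-type observation reviewed at the start of this subsection.
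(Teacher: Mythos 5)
Your proposal is correct and follows essentially the same route as the paper, which deduces the corollary directly from \Cref{lemma:gen_dehn_pi1(Rg)} together with the exact sequence $1 \to \dprod{\sigma} \to \Msigma \to \Mbeta \to 1$ and the distinguished lifts $T_{\tilde c}T_{\sigma(\tilde c)}$ (projecting to $T_c$ when $\hat{i}_2([c],[\beta])=0$) and $T_{\tilde c}$ (projecting to $T_c^2$ when $\hat{i}_2([c],[\beta])=1$) described at the start of \Cref{subsec:rel-msigma}. The general extension fact you invoke is exactly the implicit bookkeeping step the paper leaves to the reader.
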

\begin{cor}[\bf{Generating set $\Msigma$-stabilizer}]\label{cor:lift_stab}
  Let $a$ be an isotopy class in $\Sc_1(S_g)$. $\Msigma$ is generated by the following elements: $\sigma$, $f$ such that
  $fT_{\tilde{a}}f^{-1} = \sigma^i T_{\tilde{a}}$, and $h$ such that $(\overline{h}^{-1}(a),a) \in \Nc_1(S_g)$.
  Where $\overline{h}\in \Mbeta$ is the projection of $h \in \Msigma$.
\end{cor}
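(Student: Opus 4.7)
The plan is to push the conclusion of \Cref{lemma:gen_pi1(Rg)} through the Birman--Hilden central extension
\[ 1 \to \dprod{\sigma} \to \Msigma \to \Mbeta \to 1, \]
and to read off the conjugation condition $fT_{\tilde{a}}f^{-1} = \sigma^i T_{\tilde{a}}$ as the characterization of those elements of $\Msigma$ that project into $\Stab_{\Mbeta}(a)$.

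First I would verify this characterization. Since $a \in \Sc_1(S_g)$ satisfies $\hat{i}_2([a],[\beta]) = 1$, its preimage $\tilde{a} \subset S_{2g-1}$ is a single connected lift, and by construction $T_{\tilde{a}}$ is a lift of $T_a^2$. Given $f \in \Msigma$, conjugation yields
\[ f T_{\tilde{a}} f^{-1} = T_{f(\tilde{a})}, \]
which is again a lift of $T_{\bar{f}(a)}^2$, where $\bar{f}$ denotes the projection of $f$ to $\Mbeta$. If $\bar{f} \in \Stab_{\Mbeta}(a)$, then $T_{f(\tilde{a})}$ is a lift of $T_a^2$, and the two lifts of $T_a^2$ are precisely $T_{\tilde{a}}$ and $\sigma T_{\tilde{a}}$; this gives $f T_{\tilde{a}} f^{-1} = \sigma^i T_{\tilde{a}}$ for some $i \in \{0,1\}$. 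Conversely, if $f T_{\tilde{a}} f^{-1} = \sigma^i T_{\tilde{a}}$, then $\bar{f}$ conjugates $T_a^2$ to itself, and (via the standard centralizer-of-Dehn-twist fact) this forces $\bar{f}(a) = a$, i.e.\ $\bar{f} \in \Stab_{\Mbeta}(a)$.

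Next, given any $g \in \Msigma$, I would project to $\bar{g} \in \Mbeta$ and apply \Cref{lemma:gen_pi1(Rg)}: there is a word
\[ \bar{g} = w(\bar{s}_1,\ldots,\bar{s}_k,\bar{h}^{\pm 1}), \]
where each $\bar{s}_j \in \Stab_{\Mbeta}(a)$ and $\bar{h}$ is the projection of the given $h$. Choose an arbitrary lift $s_j \in \Msigma$ of each $\bar{s}_j$; by the characterization above, each such $s_j$ satisfies $s_j T_{\tilde{a}} s_j^{-1} = \sigma^{i_j} T_{\tilde{a}}$, so it lies in the proposed generating set. Then $w(s_1,\ldots,s_k,h^{\pm 1}) \in \Msigma$ has the same projection as $g$, hence
\[ g = w(s_1,\ldots,s_k,h^{\pm 1}) \cdot \sigma^\varepsilon \]
for some $\varepsilon \in \{0,1\}$. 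This expresses $g$ in terms of the claimed generators, completing the argument.

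The only real content here is the equivalence between the bracketed condition $f T_{\tilde{a}} f^{-1} = \sigma^i T_{\tilde{a}}$ and the condition $\bar{f} \in \Stab_{\Mbeta}(a)$; everything else is a formal lift from $\Mbeta$ to $\Msigma$ using that the kernel of the projection is $\dprod{\sigma}$. No new topological input beyond \Cref{lemma:gen_pi1(Rg)} is required.
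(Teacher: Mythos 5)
Your argument is correct and is essentially the proof the paper intends (the paper only remarks that \Cref{lemma:gen_pi1(Rg)} ``implies'' the corollary via the extension $1 \to \dprod{\sigma} \to \Msigma \to \Mbeta \to 1$): you identify the elements satisfying $fT_{\tilde{a}}f^{-1} = \sigma^i T_{\tilde{a}}$ as exactly the lifts of $\Stab_{\Mbeta}(a)$, lift a word from \Cref{lemma:gen_pi1(Rg)}, and absorb the ambiguity into the central element $\sigma$. No gap; at most one could note that for a lift $s$ of a stabilizer element one in fact always gets $i=0$, but this only shrinks the generating set and does not affect the argument.
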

\medskip
\begin{remark}[\bf{Relations in $\Msigma$}] Note that for any proper subsurface $S \subset S_g$, there is a lift $\Mod(S) \cap \Mbeta \to \Msigma$ defined as follows: Since $f \in \Mod(S)$ fixes a point $p$ in $S_g \setminus S$, this implies the existence of a well defined
choice of lift to $\Msigma$ by requiring the map to fix a lift of $p$.

On the other hand, it is not possible to lift all of $\Mbeta$. A way to see this is to note that $\Prym_*$ surjects
onto $\Sp(2g-2),\Z)$ and thus a lift would give an infinite image representation of $\Mbeta$ contrary
to \Cref{theo:rigidity_group_d}.

In fact, there is an explicit relation that cannot hold in $\Msigma$. Let $\tilde{R}$ be the subsurface defined in the
proof of \Cref{theo:rigidity_group_d}. $\Prym_*$ acts as the symplectic representation on the lift of $\Mod(\tilde{R})$,
while $\Prym_*(T_{\tilde{a}}) = 1$ for any $a$ with $\hat{i}_2([a],[\beta]) = 1$.
Hence, the $k$-chain relations used in the proof of \Cref{theo:rigidity_group_d} \emph{cannot hold} in $\Msigma$ and so
\[ (T_{\tilde{a}}T_{\tilde{c}_1}T_{\sigma(\tilde{c}_1)} T_{\tilde{c}_2}T_{\sigma(\tilde{c}_2)})^3=
\sigma(T_{\tilde{a}'}T_{\tilde{c}_3}T_{\sigma(\tilde{c}_3)} \ldots T_{\tilde{c}_{2g-2}}T_{\sigma(\tilde{c}_{2g-2})})^{2g-3}\]
\end{remark}

%%% Local Variables:
%%% mode: latex
%%% TeX-master: "global_rigidity_prym"
%%% End:

\subsection{Low dimensional representations of $\Msigma$}\label{subsec:low-dim-msigma}
The relations in $\Msigma$ described on \Cref{subsec:rel-msigma} , and the proof of \Cref{theo:rigidity_group_d}.1
imply the first item in \Cref{theo:rigidity_group}.
\begin{proof}[Proof of \cref{theo:rigidity_group}.1]
  Let $a \in \Sc_1(S_g)$, and $T_{\tilde{a}}$ the lift of $T_a^2$ to $\Msigma$. By liftying $\Mod(R)$,
  where $R$ is the subsurface used in the proof of \Cref{theo:rigidity_group_d}.1, it follows that $\phi(T_{\tilde{c}}T_{\sigma(\tilde{c})}) = 1$.
    Thus, by the lifted $k$-chain relation, $\phi(\sigma) \in \dprod{\phi(T_{\tilde{a}})}$.
    One then concludes by the same argument as in \Cref{theo:rigidity_group_d}.1, replacing \Cref{lemma:gen_pi1(Rg)} with \Cref{cor:lift_stab}.

\end{proof}

To tackle the second item of \Cref{theo:rigidity_group} with $m = 2g-2$, we use the results of Dey et.al.~\cite[Theorem 1]{dey2021generating} to get an explicit finite
generating set for $\Msigma$. We have the following corollary of their theorem 1.
\begin{cor}[\bf{Finite generating set}]\label{cor:fin_gen_msigma}
  Let $c_i,a_i,b_i$ be the curves in the top of \Cref{fig:dey}. Let $N(a_1 \cup b_1)$ be a Torus neighborhood of $a_1 \cup b_1$.
  $\Msigma$ is generated by $\sigma$ and chosen
  lifts of
  \[ S \cup \{F_2,\ldots,F_{g-1}\} \cup \{T_{a_2},T_{b_2},\ldots, T_{a_g},T_{b_g},T_{c_1},\ldots,T_{c_{g-1}}\} \]
  Where $F_i$ are the bounding pairs given at the bottom of \Cref{fig:dey}, and $S$ is a generating set for the subgroup of
  $\Mod(N(a_1 \cup b_1))$ fixing $[\beta] = [b_1]$ mod 2.
\end{cor}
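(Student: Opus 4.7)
The plan is to combine the short exact sequence
\[ 1 \to \dprod{\sigma} \to \Msigma \to \Mbeta \to 1 \]
with the finite generating set for $\Mbeta$ produced by Dey et al.~\cite{dey2021generating}. In general, given any short exact sequence $1 \to K \to G \to Q \to 1$, if $\{q_i\}$ generates $Q$ and $\{k_j\}$ generates $K$, then any choice of lifts $\{\tilde{q}_i\}\subset G$ together with $\{k_j\}$ generates $G$: indeed, an arbitrary $g \in G$ projects to a word $w(q_i)$ in $Q$, so $g \cdot w(\tilde{q}_i)^{-1}$ lies in $K$ and can be expressed in the $k_j$'s. Here $K = \dprod{\sigma}$ is cyclic of order $2$, so it suffices to adjoin $\sigma$ to any family of lifts of a generating set for $\Mbeta$.

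Next, I would invoke \cite[Theorem 1]{dey2021generating}, which (after matching their notation to the configuration in the top of \Cref{fig:dey}) furnishes $\Mbeta$ with a generating set consisting of: a generating set $S$ for the stabilizer $\Stab_{\Mod(N(a_1 \cup b_1))}([b_1]) \subset \Mod(N(a_1\cup b_1))$ of $[\beta] = [b_1]$ mod $2$ on the genus-one neighborhood of $a_1 \cup b_1$; the bounding pair maps $F_2,\ldots,F_{g-1}$; and the Dehn twists about the curves $a_2,b_2,\ldots,a_g,b_g,c_1,\ldots,c_{g-1}$ displayed in \Cref{fig:dey}. Since each of these generators is an element of $\Mbeta$, choosing any lift of it to $\Msigma$ is legitimate; the natural choices are the distinguished lifts described at the beginning of \Cref{subsec:rel-msigma} (multi-twists $T_{\tilde{c}}T_{\sigma(\tilde{c})}$ when $\hat{i}_2([c],[\beta])=0$, the single twist $T_{\tilde{c}}$ about the connected preimage when $\hat{i}_2([c],[\beta])=1$, and lifts of bounding pair maps or of elements of $S$ obtained by fixing a basepoint in the complementary subsurface, as in the remark on relations).

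Applying the preliminary observation, $\Msigma$ is then generated by $\sigma$ together with any such collection of lifts, which is precisely the set stated in the corollary. The only step that requires any care is confirming that Dey et al.'s generators of $\Mbeta$ coincide (up to the obvious translation of their curve system to ours) with the list $S \cup \{F_2,\ldots,F_{g-1}\} \cup \{T_{a_2},\ldots,T_{c_{g-1}}\}$; this is a bookkeeping matter rather than a conceptual obstacle. No subtlety enters from the extension being non-split (as noted in the corollary following \Cref{theo:rigidity_group_d}), because for the purpose of producing a \emph{generating set} one is free to pick lifts without coherence, and the resulting indeterminacy is absorbed by the generator $\sigma$.
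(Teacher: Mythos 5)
Your argument is correct and is essentially the paper's own (implicit) proof: the corollary is deduced by combining Dey et al.'s finite generating set for $\Mbeta$ with the exact sequence $1 \to \dprod{\sigma} \to \Msigma \to \Mbeta \to 1$, exactly as you do, and your observation that non-splitness is irrelevant for producing a generating set is the right one. No gap.
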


\begin{figure}[H]
  \begin{center}
    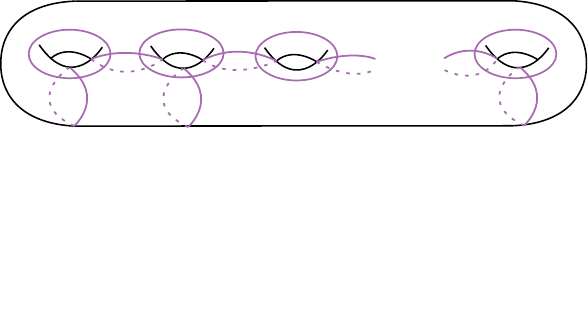
  \end{center}
  \caption{Top: Curve generators for $\Mbeta$. Bottom: Torelli generators for $\Mbeta$.}\label{fig:dey}
\end{figure}

\begin{proof}[Proof of \Cref{theo:rigidity_group}.2]
  Let $\phi:\Msigma \to \GL(2g - 2,\C)$ be any nonfinite representation.
  Represent $[\beta]$ by
  $[b_1]$. Let $R$ be the complement of an annular neighborhood of $b_1$. Any $f \in \Mod(R)$ fixes $b_1$ \emph{pointwise}.
  Thus, we can lift $\Mod(R)$ to
  $\Msigma$ fixing the lift $\tilde{b}_1$ of $b_1$ pointwise, in particular any Dehn twist $T_c \in \Mod(R)$
  lifts to a multitwist $T_{\tilde{c}}T_{\sigma(\tilde{c})}$. It follows that $\phi$ induces a representation $\phi_R:\Mod(R) \to \GL(2g - 2,\C)$.
  As $\phi$ is nonfinite, we must have $\phi_R$ nontrivial thus after a conjugation we can assume that
  $\phi_R = \psi$, where $\psi$ is the standard \emph{symplectic representation}. Note that
  $[\tilde{c}] - \sigma_*[\tilde{c}]$ for SCCs $c \in R$, generate $H_1(S_{2g-1},\Z)^-$, and thus $\Prym_*$ acts as
  $\psi$ on this lift of $\Mod(R)$, thus $\phi_R = \Prym_*|_R$.

  It remains
  to check the action of $\phi$ on the other generators of $\Msigma$ coming from \Cref{cor:fin_gen_msigma}.
  Let $T = N(a_1 \cup b_1)$ be a torus neighborhood of $a_1 \cup b_1$, and $\tilde{R} \subset R$ the
  the complementary subsurface. As any mapping class in $\Mod(T)$ fixes $\tilde{R}$ pointwise, there is a lift $\widetilde{\Mod(T)}$ of $\Mod(T)$ to $\Msigma$,
  so that the lift of each element fixes both lifts of $\tilde{R}$ to $S_{2g-1}$ pointwise. In particular, any lift $\tilde{f}$
  of $f \in \Mod(T)$ commutes with lifts of $\Mod(\tilde{R})$. As $\phi_R(\tilde{R}) = \Sp(2g-2,\Z)$,
  it follows that $\phi(\tilde{f})$ is a scalar, for any $f \in \Mod(T)$. In particular $T_{\tilde{a}_1} = \lambda \in \C^*$,
  and so by \Cref{cor:conj_Msigma}, the same holds for any $T_{\tilde{a}}$ with $\hat{i}_2([a],[\beta]) = 1$. $\Prym_*$ acts trivially
  on this lift of $\Mod(T)$, thus
  \[ \phi_T. \Prym_*|_{\widetilde{\Mod(T)}}^{-1} \in \C^*.\]

  Next, note that by the chain-relations each bounding pair $F_i$ can be expressed as,
  \[ F_i = (T_{a_1}^2T_{c_1}T_{a_2}\ldots T_{c_{i-1}}T_{a_i})^{2i-1}T_{d_i}^{-2} \]
  Where $T_{d_i}$ is one of the curves of the bounding pair. So for each lift $\tilde{F}_i$,
  \[ \phi(\tilde{F}_i).\Prym_*(\tilde{F}_i)^{-1} \in \C^*. \]

  Lastly, $\phi(\sigma)$ commutes with any element of $\phi(\Msigma)$. Thus as \[\Ima(\phi) = \Sp(2g-2,\Z),\] $\phi(\sigma)$ must be a scalar.

  It follows that for any $f \in \Msigma$, $\phi(f)\Prym_*(f)^{-1} = \lambda(f) \in \C^*$. We claim that
  $f \mapsto \lambda(f)$ is a homomorphism. Indeed,
  \[ \phi(fg)\Prym_*(fg)^{-1} = \phi(f)\phi(g) \Prym_*(g)^{-1}\Prym_*(f)^{-1}=
    \lambda(f).\lambda(g) \]

  In particular $\lambda:\Msigma \to \C^*$ must be cyclic of order at most $4$.
\end{proof}
\begin{proof}[Proof of \Cref{theo:rigidity_group_d}.2]
Let $g \geq 4$ be even, \Cref{theo:rigidity_group} gives us an example of an infinite representation $\phi:\Mbeta \to \GL(2g-2),\C)$. Let $\chi:\Msigma \to \C^*$, with $\chi(\sigma) = -1$. Then, $\phi$ is induced by,
\[ \tilde{\phi}:\Msigma \to \GL(2g-2,\Z) \ \ , \ \ f \to \chi(f) \Prym_*(f).\]

The same argument as in the of the proof of \Cref{theo:rigidity_group}.2, replacing $\Prym_*$ with $\phi$,
concludes the proof of \Cref{theo:rigidity_group_d}.
\end{proof}

%%% Local Variables:
%%% mode: latex
%%% TeX-master: "global_rigidity_prym"
%%% End:

\subsection{Symplectic representations}
\Cref{subsec:low-dim-msigma} considered representations $\phi:\Msigma \to \GL(m,\C)$. The results extend easily to cases
where the image is contained in $\Sp(2h,\Z)$. Let $\Delta(2h,\Z)$ be the subgroup of $\GL(2h,\Z)$ fixing the
symplectic form up to sign.
\begin{remark} Note that any element of $\Delta(2h,\Z)$ is of the form $Z^i A$ for $A \in \Sp(2h,\Z)$, and
  $Z = \begin{pmatrix} \Id & 0 \\ 0 & -\Id \end{pmatrix}$.\end{remark}

\begin{cor}\label{cor:symp_rep}
  Let $g \geq 4$ and $h \leq (g-1)$. Let $\phi:\Msigma \to \Sp(2h,\Z)$ be a homomorphism, then
  \begin{enumerate}
  \item If $h < g-1$, then $\Ima(\phi)$  is a quotient of $\Z/4\Z$.
  \item If $h = g-1$, then either $\Ima(\phi)$ is a quotient of $\Z/4\Z$
    or up to a conjugation in $\Delta(2g - 2,\Z)$ is of the form:
    \[ f \to \chi(f) \Prym_*(f) \]
    where $\chi:\Msigma \to \{-\Id,+\Id\}$ is any group homomorphism.
  \end{enumerate}
\end{cor}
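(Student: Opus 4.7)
The plan is to reduce directly to \Cref{theo:rigidity_group} by viewing $\phi$ as a representation into $\GL(2h,\C)$, and then refine the output using the extra constraint that the image lies in $\Sp(2h,\Z)$. Case $(1)$, where $m = 2h < 2(g-1)$, follows at once from part $(1)$ of \Cref{theo:rigidity_group}: $\Ima(\phi)$ is cyclic of order at most $4$, hence a quotient of $\Z/4\Z$. For case $(2)$, with $m = 2(g-1)$, part $(2)$ of \Cref{theo:rigidity_group} gives either the same cyclic conclusion or else the existence of $A \in \GL(2(g-1),\C)$ and a character $\chi:\Msigma \to \C^*$ with $\chi^4 = 1$ so that $\phi(f) = A\chi(f)\Prym_*(f)A^{-1}$. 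The remaining work is to refine this second sub-case.

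To force $\chi$ into $\{\pm \Id\}$, I would exploit that $\phi(f)$ preserves the standard symplectic form $J$. Setting $M = A^T J A$ and plugging into $\phi(f)^T J \phi(f) = J$ yields $\Prym_*(f)^T M \Prym_*(f) = \chi(f)^{-2} M$, so the similitude multiplier of $M$ under $\Ima(\Prym_*)$ defines a character on a finite-index, Zariski-dense subgroup of $\Sp(2(g-1),\C)$. Since $\Sp$ is simply connected semisimple it admits no non-trivial algebraic characters, so this character is trivial and $\chi^2 \equiv 1$, placing $\chi$ in $\{\pm\Id\}$. With $\chi^2 = 1$ in hand, $M$ is preserved by a Zariski-dense subgroup of $\Sp(2(g-1),\C)$, and absolute irreducibility of the standard representation forces $M = \lambda J$ for some $\lambda \in \C^*$; that is, $A$ is a symplectic similitude.

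To upgrade $A$ to an element of $\Delta(2(g-1),\Z)$, I would consider the $\Z$-lattice $L := A \cdot \Z^{2(g-1)} \subset \C^{2(g-1)}$. Since $\phi(\Msigma) \subseteq \Sp(2(g-1),\Z)$ preserves $L$ setwise and contains a finite-index subgroup of $\Sp(2(g-1),\Z)$, and since $\Sp(2n,\Z)$ acts transitively on primitive vectors of $\Z^{2n}$, every full-rank $\Sp(2(g-1),\Z)$-invariant lattice in $\C^{2(g-1)}$ is a scalar multiple of $\Z^{2(g-1)}$. Consequently $L = c\,\Z^{2(g-1)}$ for some $c \in \C^*$, and replacing $A$ by $c^{-1}A$ (which does not affect the conjugation) places $A \in \GL(2(g-1),\Z)$. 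Combining $A^T J A = \lambda J$ with $\det A = \pm 1$ then forces $\lambda^{2(g-1)} = 1$ with $\lambda \in \Z$, so $\lambda \in \{\pm 1\}$ and $A \in \Delta(2(g-1),\Z)$, as required. I expect the main obstacle to be the lattice-rigidity step: while uniqueness up to scaling of $\Sp(2(g-1),\Z)$-invariant full-rank lattices in $\C^{2(g-1)}$ is classical, reconciling it with the fact that $\phi(\Msigma)$ may only be a finite-index subgroup of $\Sp$ requires some care.
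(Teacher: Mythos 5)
Your reduction to \Cref{theo:rigidity_group} and the first half of case (2) are sound: item (1) is immediate, the computation $\Prym_*(f)^T M \Prym_*(f) = \chi(f)^{-2} M$ with $M = A^TJA$ is correct, and $M = \lambda J$ follows from Zariski density and irreducibility once $\chi^2 \equiv 1$. Two caveats there: the multiplier is an \emph{abstract} character of $\Ima(\Prym_*)$, not an algebraic character of the group $\Sp(2(g-1),\C)$, so "simply connected semisimple groups have no algebraic characters" does not apply directly (Zariski-dense arithmetic subgroups such as congruence subgroups do carry nontrivial finite-order characters); what saves you is that $\Prym_*$ is surjective onto $\Sp(2(g-1),\Z)$, which is perfect for $g-1\geq 3$, so the induced character is trivial. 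This is a fixable imprecision.

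The genuine gap is the lattice-rigidity step, exactly where you flag it. The group you know preserves $L = A\,\Z^{2(g-1)}$ is $\phi(\Msigma)$, and what you can justify is only that $\phi(\Msigma)$ contains $A\Gamma A^{-1}$ for some finite-index $\Gamma \leq \Sp(2(g-1),\Z)$ --- a $\GL(2(g-1),\C)$-conjugate of a finite-index subgroup, which is \emph{not} known at this point to be of finite index in $\Sp(2(g-1),\Z)$ in the standard coordinates. Moreover, even granting finite index, the uniqueness claim you invoke is false for proper finite-index subgroups: the principal congruence subgroup $\Sp(2n,\Z)[N]$ preserves the full-rank lattice $\Z^{2n-1}\oplus N\Z$, which is not a scalar multiple of $\Z^{2n}$. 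So the step "every invariant full-rank lattice is $c\,\Z^{2(g-1)}$" cannot be applied to $\phi(\Msigma)$ as you have set it up, and this is precisely the step needed to land the conjugating matrix in $\Delta(2(g-1),\Z)$. The gap can be closed: since $\ker\chi$ contains the commutator subgroup of $\Msigma$ and $\Prym_*$ is onto the perfect group $\Sp(2(g-1),\Z)$, one gets $\{\chi(f)\Prym_*(f)\}\supseteq \Sp(2(g-1),\Z)$, hence $A\,\Sp(2(g-1),\Z)\,A^{-1}\subseteq \phi(\Msigma)\subseteq \Sp(2(g-1),\Z)$; then it is the lattice $A^{-1}\Z^{2(g-1)}$ that is invariant under the \emph{full} standard $\Sp(2(g-1),\Z)$, and the classical uniqueness (proved via transvections, not merely transitivity on primitive vectors) applies. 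The paper avoids all of this by a different device: it observes that $\phi|_{\Mod(R)}$ already has image in $\Sp(2(g-1),\Z)$ and becomes the standard symplectic representation after conjugation by $A$, so conjugation by $A$ is an automorphism of $\Sp(2(g-1),\Z)$, and then quotes Reiner's theorem that every such automorphism is induced by conjugation in $\Delta(2(g-1),\Z)$; the values of $\chi$ are then pinned to $\pm\Id$ because they are scalars centralizing the image. Your route is viable and more self-contained, but as written the decisive step is unproved.
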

\begin{proof} The first item follows directly from \Cref{theo:rigidity_group}. For the second, assume that the image is not finite as the finite case
  follows directly as well. By \Cref{theo:rigidity_group}
  there is a matrix $A \in \GL(2g - 2,\C)$ such that $\Psi = A \phi A^{-1}$ is of the desired form. Indeed,
  Let $R$ be the subsurface in the proof of \Cref{theo:rigidity_group}, then  note that $A$ is chosen so that
  $\phi_R:= \phi|_{\Mod(R)}$ is the standard symplectic representation. In particular the conjugation by $A$, $C_A:\Sp(2g - 2,\Z) \to
  \Sp(2g - 2,\Z)$, is an automorphism. Reiner~\cite[Theorem 3]{Reiner}
  showed that all such automorphisms come from conjugation in $\Delta(2g - 2,\Z)$. With this remark
  in place, the proof of \Cref{theo:rigidity_group} goes through without modifications. Importantly, the
  image of $\chi$ lies on the centralizer of $\Ima(\phi) = \Sp(2g-2,\Z)$, hence the result.
\end{proof}

%%% Local Variables:
%%% mode: latex
%%% TeX-master: "global_rigidity_prym"
%%% End:

 % In fact, any dehn twist $T_{\tilde{a}}$ in $\Msigma$ comes from this construction,
% as we must have $\hat{i}_2([\rho(a)],[\beta]) = 2$

% By choosing lifts of this generating set to $\Msigma$, we also get a generating set for $\Msigma$.
% In case $\hat{i}_2([\beta],[a]) = 1$, a lift of $T_{a}^2$ is given by
% $T_{\tilde{a}}$ where $\tilde{a}$ is the union of the two lifts of $a$. On the other hand if $\hat{i}_2([\beta],[a]) = 0$, then $a$ lifts
% to a simple closed curve $\tilde{a}$ in $S_{2g-1}$ and a lift of $T_a$ is given
% by the \emph{multi-twist} $T_{\tilde{a}} T_{\sigma(\tilde{a})}$.

%%% Local Variables:
%%% mode: latex
%%% TeX-master: "global_rigidity_prym"
%%% End:

\section{Holomorphic results}\label{sec:holo}
The aim of this section is to complete the proof of \Cref{theo:prym_rigid}.
Fix $g \geq 4$ and $h\leq g-1$, and consider a holomorphic map of complex orbifolds
\[ F: \CR_g \to \A_h.\]

%\prymrigid*
The proof follows Farb's strategy which consists of 6 steps~\cite[p 2-3]{farb2021global}.
\subsection{Farb's proof strategy}\label{sec:farb_proof}
\begin{enumerate}
  \item[Step 1:] As we will see below (\cref{subsubsec:trivial-prym,subsubsec:f-prym}), the results of \Cref{sec:Topology} quickly
reduce the statement to the case of $h =g-1$ and $F$ homotopic to $\Prym$.
\item[Step 2:] The important observation is that $\h_{g-1}$ is a bounded symmetric domain, so by a criterion of Borel-Narasiman~\cite{Borel-Narasimhan}
it is enough to check that $F$ and $\Prym$ agree at \emph{point} $x \in \CR_g$. The remaining steps consist
on finding such an $x$.
\item[Step 3-4:] Restrict the homotopy $F_t$ between $\Prym$ and $F$ to
a curve $C$. (\textbf{Step 3}) Improve the homotopy to a geodesic homotopy, and following an argument of Antonakouidis-Aramayona-Souto~\cite{AAS}
and a Wirtinger-type inequality improve the homotopy even further to $F_t|_C$ be holomorphic at each $t$. (\textbf{Step 4}) Finally,
using a theorem of Kobayashi--Ochiai~\cite[Theorem 2']{KO}, improve $F_t|_C$ to be algebraic at each $t$. These steps carry over
without any modification as they depend on the target of the map being $\A_g$ (or covers of).
\item[Step 5:] Find a $\A_{g-1}$-\emph{rigid curve} $C \subset \CR_g$ (see \cref{subsubsec:rigid_curve}), so
  that $\Prym$ is an isolated point in $\Mor(C,\A_{g-1})$.
  \item[Step 6:] $F_t|_C$ is constant, so in particular $F$ and $\Prym$ agree on $C$ and we are done.
\end{enumerate}
In our case, to avoid orbifold issues, steps 3-6 would be carried out on a suitable finite cover of $\CR_g$.
An alternative proof, suggested to Farb by Richard Hain and replacing steps 3-6 by a variation of Hodge structures argument~\cite[Section 3]{farb2021global}, is also given.
\subsection{Case $h < g-1$}\label{subsubsec:trivial-prym}
By \Cref{theo:rigidity_group}, there is a finite cover $\tilde{\CR}_g \to \CR_g$ such that the induced map $\tilde{F}: \tilde{\CR}_g \to \A_h$ satisfies $\tilde{F}_*:\pi_1^{\orb}(\tilde{\CR}_g)  \to \Sp(2h,\Z)$ is trivial. Thus, $\tilde{F}:\tilde{\CR}_g \to
\A_h$ lifts to a holomorphic map $G:\tilde{\CR}_g \to \h_h$, in particular $\tilde{F}$ is homotopic to a constant map, and the same holds for any such continuous $F$ (cf.~\Cref{cor:continuous_rigidity}). As $\tilde{\CR}_g$ is a finite (branched) cover
of $\CR_g$ it is also a quasiprojective variety, and as $\h_h$ is a bounded domain it follows that $G$ is constant. The same argument gives a proof of \Cref{theo:triviality_rg}.

In what follows we will assume that $F_*:\Msigma \to \Sp(2g-2,\Z)$ has infinite image, as otherwise the same argument shows that $F$ is constant.

\subsection{Case $h = g-1$}
\subsubsection{The Prym map}\label{subsubsec:prym_map}
Let $X$ be a smooth genus $g$ complex curve. Any nonzero $\theta \in H^1(X,\Z/2\Z)$ defines an unbranched double cover
\[ p:Y \to X, \]
with deck transform $\sigma$, and where $Y$ is a curve of genus $2g-1$.
The order 2 action of $\sigma^*$ on $\Omega^1(Y)$ induces a splitting
\[ \Omega^1(Y) = \Omega^1(Y)^+ \oplus \Omega^1(Y)^- \]
corresponding to the $\pm 1$ eigenspaces of $\sigma^*$. Similarly, the action of $\sigma_*$ on $H_1(Y,\Z)$
has two distinct subspaces\footnote{But this is \emph{not} a splitting of
  $H_1(Y,\Z)$.}, $H_1(Y,\Z)^+$ and $H_1(Y,\Z)^-$.
The \emph{Prym variety} associated to $(X,\theta)$ is defined as
\[ \Prym(X,\theta) := \frac{(\Omega^1(Y)^-)^\vee}{H_1(Y,\Z)^-}.\]

$\Prym(X,\theta)$ is a subtorus of $\Jac(Y)$, and the restriction of the principal polarization from
$\Jac(Y)$ (given by the intersection pairing on $H_1(Y,\Z)$) to $\Prym(X,\theta)$ induces twice a
principal polarization. In particular, $\Prym(X,\theta)$ is a PPAV of dimension $g-1$. This description is in fact, equivalent to the one given in the introduction. The map
\[ \Jac(p):\Jac(Y) = \frac{\Omega^1(Y)^\vee}{H_1(Y,\Z)} \to \Jac(X) = \frac{\Omega^1(X)^\vee}{H_1(X,\Z)}, \]
is induced by the dual of the pullback $p^*:\Omega^1(X) \to \Omega^1(Y)$. Hence,
\[ \ker \Jac(p) = \frac{(\Omega^1(Y)^-)^\vee}{H_1(Y,\Z)^-} = \Prym(X,\theta).\]

% The isomorphism, under $p^*$,
% \[ \Jac(X) \cong \frac{(\Omega^1(Y)^+)^\vee}{H_1(Y,\Z)^+}\]
% implies that, \[ \Prym(X,\theta) \cong \frac{\Jac(Y)}{\Jac(X)}  \in \A_{g-1}.\]
\medskip
\noindent{\textbf{The Prym period matrix.}} Recall that $\Fix(\sigma) \subseteq \teich_{2g-1}$ is
the space of \emph{marked} curves $[(Y,\phi)]$ with a fixed-point free holomorphic involution $\sigma_\phi$ isotopic
to $\phi \sigma \phi^{-1}$.
Consider $[(Y,\phi)] \in \Fix(\sigma) \subset \teich(S_{2g-1})$.
Let $\{a_i,b_i\}$ for $i=0,\ldots,2g-2$, be a geometric symplectic basis for $S_{2g-1}$ such
that
\[ \sigma(b_i) = b_{i+g-1} \ \ , \ \sigma(a_i) = a_{i+g-1} \ \ \ i = 1,\ldots, g-1 \]
Let $\omega_i$ be a basis for $\Omega^1(Y)$ dual to $\{\phi(a_i)\}$, and let $u_i := \frac{\omega_i - \omega_{i+g-1}}{2}$
for $i = 1,\ldots, g-1$. Then, $\{u_i\}$ is a basis for $\Omega^1(Y)^{-1}$, and in fact $\{u_i\}$ is dual to
$\{\phi(a_i) - \phi(a_{i+g-1})\}_{ i \geq 1}$. Importantly, the marking $\phi:S_{2g-1} \to Y$ allows us to choose such
$\{u_i,\phi(a_i),\phi(b_i)\}$ globally over $\Fix(\sigma)$ (see also \cite{Farkas-Rauch,Mumford}
and \cite[Section 3.1]{Sato}).
Then,
\[ \tau = \left( \int_{\phi(b_j) - \phi(b_j + g-1)} u_i \right) \in \h_{g-1}.\]
Let $\widetilde{\Prym}:\Fix(\sigma) \to \h_{g-1}$ be $[(Y,\phi)] \to \tau$.
Moreover, if the (normalized) period matrix of $Y$ with respect to $\{\phi(a_i),\phi(b_i)\}$ and $\{\omega_i\}$ is given by:
\[ \left(\int_{\phi(b_j)} \omega_i\right)_{0\leq i,j \leq 2g -2}  = \begin{pmatrix}
    * & * & *\\
    * & B & C^T \\
    * & C & D
  \end{pmatrix}
\]
Then, $\tau = B - C$ and in particular $\widetilde{\Prym}$ is holomorphic.
Similarly, by a direct computation one can check that $\widetilde{\Prym}$ is $\Prym_*$-equivariant\footnote{
  With respect to the action of $\Msigma$ on $\Fix(\sigma)$, $f\cdot [(Y,\phi)] \to [(Y,\phi \circ f^{-1})]$,
  we find equivariance, but with $\Sp(2g - 2,\Z)$ acting by $\begin{pmatrix}\alpha & \beta \\ \gamma
    &\delta \end{pmatrix}\cdot \tau \to (\alpha \tau - \beta)(-\gamma \tau + \delta)^{-1}$.}
and lifts $\Prym$
\begin{center}
\begin{tikzcd}
  &\Fix(\sigma) \ar[r,"\widetilde{\Prym}"] \ar[d, ""] & \h_{g-1} \ar[d]\\
  &\CR_g \ar[r,"\Prym"] & \A_{g-1}
\end{tikzcd}
\end{center}

\subsubsection{$F$ homotopic to $\Prym$}\label{subsubsec:f-prym}
Let $F: \CR_g \to \A_{g-1}$ be a nonconstant holomorphic map. Then, by \Cref{cor:symp_rep}
there is an $A \in \Delta(2g-2,\Z)$ and $\chi:\Msigma \to \{\pm \Id \}$ such that
\[ A F_*A^{-1} = \chi \Prym_* . \]
If $A \in \Sp(2g,\Z)$, it follows that there is a lift $\tilde{F}:\Fix(\sigma) \to \h_{g-1}$ which is equivariantly
homotopic to $\widetilde{\Prym}$. Indeed, this is
because $\chi$ acts trivially on $\h_{g-1}$ so both $F_*$ and $\Prym_*$ factor trough the \emph{same} representation
$\widehat{\Prym}_*: \Mbeta \to \PSp(2g - 2,\Z)$. In fact,
the homotopy can be chosen to be a straight-line homotopy, i.e. a homotopy through geodesics, as $\h_{g-1}$ has a K\"ahler metric of nonpositive
curvature under which the action of $\Sp(2g - 2,\Z)$ is by isometries.

The case in which $A = Z B$ for $B \in \Sp(2g - 2,\Z)$, and $Z = \begin{pmatrix}
  \Id & 0 \\ 0 &-\Id \end{pmatrix}$
can be ruled out as follows: Consider the map $G: \h_{g-1} \to \h_{g-1}$, given by $\tau \to - \overline{\tau}$,
then $G$ is $Z$-equivariant and anti-holomorphic. In particular there is a lift $\tilde{F}$ of $F$ such
that $F_G:= G \circ \tilde{F} $ is equivariantly homotopic to $\widetilde{\Prym}$, hence we have a holomorphic
map $\Prym$ homotopic to an antiholomorphic map $F_G$. As $\CR_g$ contains a smooth holomorphic closed curve $X$ this is imposible
(see also \cref{subsubsec:VHS}).
Indeed, let $\omega$ be the K\"{a}hler form on $\A_{g-1}$, then restricting the maps to $X$, due to $F_G$ being antiholomorphic
 and $\omega_X$ being the volume form on $X$, $F_G^*(\omega) = f_1 \omega_X$
where $f_1 \leq 0$. On the other hand, $\Prym^*(\omega) = f_2\omega_X$ for $f_2 \geq 0$. By Stokes' theorem we then find $f_1 = f_2 = 0$.
Hence $\Prym$ is constant over $X$, and as we can choose $X$ generically we reach a contradiction.

\subsubsection{$\psi$-structures}
The arguments in \Cref{subsubsec:trivial-prym,subsubsec:f-prym} show that the results of \Cref{sec:Topology} imply that
if $F:\CR_g \to \A_{g-1}$ is nonconstant, then it is homotopic to $\Prym$.
One could carry out the next steps in Farb's proof~\cite{farb2021global} under this setting, but the orbifold issues become cumbersome
at the last step (existence of rigid curves).

To circumvent these issues we first pass to a finite cover of $\CR_g$. Let $[\beta] \in H_1(S_g,\Z/2\Z)^*$,
and $\rho : S_{2g-1} \to S_g$ be the double cover induced by the map $\pi_1(S_g) \to \Z/2\Z$, given by $\gamma \to \hat{i}_2([\gamma],[\beta])$.
Next, let $\hat{S}_L \to S_{2g-1}$ be the cover induced by the surjection $\pi_1(S_{2g-1}) \to H_1(S_{2g-1},\Z/L\Z)$ for $L\geq 3$
sending $\gamma \mapsto [\gamma]$, its homology class mod $L$.
Then the composite cover is induced by the map
\[ \psi: \pi_1(S_g) \to \frac{\pi_1(S_g)}{\dprod{\pi_1(S_{2g-1})',\pi_1(S_{2g-1})^L}} =: G.\]
Where $\pi_1(S_{2g-1})^L := \{\gamma^L:\gamma \in \pi_1(S_{2g-1}) \}$ and $\pi_1(S_{2g-1})'$
is the commutator subgroup $[\pi_1(S_{2g-1},\pi_1(S_{2g-1})]$ of $\pi_1(S_{2g-1})$.
Let $\Gamma_g[\psi]$ be the stabilizer of $\psi$ (as an exterior homomorphism) on $\Mod(S_g)$ and consider the finite cover
\[ \hat{\CR}_g[\psi] := \teich(S_g)/\Gamma_g[\psi] \]
of $\hat{\CR}_g$, given by attaching to each curve a level $\psi$-structure \cite[p.511]{arbarello2011geometry}.

\begin{defn}[\bf{Prym Level-L structures}]
   For any integer $L \geq 0$, we define
   \[ \Mod(S_{2g-1},\sigma)[L] = \Prym_*^{-1}(\Ker\{\Sp(2g - 2,\Z) \to \Sp(2g - 2,\Z/L\Z)\}) \]
 \end{defn}
\begin{remark} Unlike $\Mod(S_g)[L]$ (the \emph{level $L$ congruence subgroup}~\cite[Ch 6.4.2]{FM}), the group $\Msigma[L]$ contains torsion for $L \geq 3$. This is because the kernel of $\Prym_*$ contains
  torsion: a lift of the hyperelliptic involution from $S_g$ to the cover $S_{2g-1}$ acts under $\Prym_*$
  in the same way as $\sigma$.
\end{remark}
Importantly for us $\Gamma_g[\psi]$ satisfies the following properties:
\begin{lemma}\label{lemma:psi}\mbox{}
  \begin{enumerate}
  \item   $\Gamma_g[\psi] \subset \Mod(S_g)[L] \cap \pi(\Msigma[L])$.
  \item Let $b$ be a SCC representative of $[\beta]$, and let $a$ be a SCC intersecting $b$ transversely at one point.
    Let $R$ be the complement of a torus neighborhood of $a \cup b$. Then $\Mod(R)[L] \subset \Gamma_g[\psi]$.
    \end{enumerate}
\end{lemma}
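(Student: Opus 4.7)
\textbf{Proof proposal for \Cref{lemma:psi}.} The plan is to interpret $\Gamma_g[\psi]$ concretely as the set of $f \in \Mod(S_g)$ for which $f_*$ preserves $\ker\psi \subset \pi_1(S_g)$ and the resulting automorphism of $G := \pi_1(S_g)/\ker\psi$ is inner. Both inclusions will follow by tracking the action of mapping classes on $G$ and its natural abelian quotients, exploiting that $G$ surjects onto $\Z/2\Z$ (via the $\beta$-monodromy) and onto $H_1(S_g, \Z/L\Z)$; the latter because $\ker\psi = \pi_1(S_{2g-1})'\cdot \pi_1(S_{2g-1})^L$ is automatically contained in $\pi_1(S_g)'\cdot \pi_1(S_g)^L$.

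For part (1), I would argue as follows. Since inner automorphisms act trivially on every abelian quotient, an $f \in \Gamma_g[\psi]$ must fix $[\beta] \in H_1(S_g,\Z/2\Z)$ and act trivially on $H_1(S_g,\Z/L\Z)$, so $f \in \Mbeta \cap \Mod(S_g)[L]$. For the $\pi(\Msigma[L])$ statement, I would pick a lift $\tilde f \in \Msigma$ and consider the induced action on the abelian normal subgroup $N := \pi_1(S_{2g-1})/\ker\psi \cong H_1(S_{2g-1},\Z/L\Z)$. The conjugation action of an element $g \in G$ on the abelian $N$ depends only on the coset $gN \in G/N = \Z/2\Z$. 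Hence exactly one of the two lifts $\tilde f, \sigma\tilde f$ corresponds to $g \in N$ and acts trivially on $H_1(S_{2g-1},\Z/L\Z)$; that lift lies in $\Mod(S_{2g-1})[L] \cap \Msigma \subset \Msigma[L]$.

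For part (2), I would take $h \in \Mod(R)[L]$, extend by the identity to $S_g$, and verify $h \in \Gamma_g[\psi]$ directly. Since $h$ fixes $b$ pointwise, $h_* \in \Mbeta$ preserves $\pi_1(S_{2g-1})$ and so preserves the characteristic subgroup $\ker\psi$. To check that the induced automorphism of $G$ is the identity (hence trivially inner), I would use the Van Kampen decomposition $\pi_1(S_g,p) = \pi_1(R,p) *_{\pi_1(\partial R,p)} \pi_1(N(a\cup b),p)$ at a basepoint $p \in \partial R$, and analyze the two factors separately. On $\pi_1(N(a\cup b),p)$, $h_*$ is the identity by support. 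On $\pi_1(R,p)$, every loop is disjoint from $b$ and hence has trivial $\beta$-monodromy, so $\pi_1(R) \subset \pi_1(S_{2g-1})$ and $\pi_1(R)'\cdot \pi_1(R)^L \subset \ker\psi$; the assumption $h \in \Mod(R)[L]$ then yields $h_*(\gamma)\gamma^{-1} \in \ker\psi$ for every $\gamma \in \pi_1(R)$, as required.

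The main subtlety I anticipate is the lift-selection step in (1): the two lifts $\tilde f, \sigma\tilde f$ genuinely induce different actions on $H_1(S_{2g-1},\Z/L\Z)$, differing by $\sigma_*$, and only one of them is killed modulo $L$. Once this is acknowledged, both parts reduce to a clean bookkeeping of the quotients of $\pi_1(S_g)$ induced by $\psi$ and of the Van Kampen amalgamation along $\partial R$.
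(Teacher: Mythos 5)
Your proposal is correct and follows essentially the same route as the paper: part (1) is the paper's argument in algebraic garb (the paper verifies via explicit path lifting that one of the two lifts kills $H_1(S_{2g-1},\Z/L\Z)$, which is exactly your observation that the conjugation action of $G$ on the abelian kernel $N=\pi_1(S_{2g-1})/\ker\psi$ factors through $G/N\cong\Z/2\Z$ and is matched by the choice of lift $\tilde f$ versus $\sigma\tilde f$), and both treatments use the surjections of $G$ onto $H_1(S_g,\Z/L\Z)$ and onto $\Z/2\Z$ for the $\Mod(S_g)[L]\cap\Mbeta$ part. For part (2) you work downstairs with the van Kampen splitting along $\partial R$ and the inclusion $\pi_1(R)'\pi_1(R)^L\subset\ker\psi$, while the paper lifts $\phi$ to $S_{2g-1}$ and checks $\psi$-invariance on $\pi_1(S_{2g-1})$ together with $a$; these are interchangeable bookkeeping for the same idea, and your version is complete.
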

\begin{proof} Pick a basepoint $x \in S_g-R$ and $\tilde{x}$ the corresponding basepoint for $\pi_1(S_{2g-1})$.
  \begin{enumerate}
  \item
    We will show that in fact, for any $f \in \Gamma_g[\psi]$, there is a lift $\tilde{f}:S_{2g-1} \to S_{2g-1}$
  such that $\tilde{f}$ acts trivially on $H_1(S_{2g-1},\Z/L\Z)$ and $f_* \in \Mod(S_g)[L]$.
  The latter follows easily as we have a surjection
  \[ G = \frac{\pi_1(S_g)}{\dprod{\pi_1(S_{2g-1})',\pi_1(S_{2g-1})^L}} \twoheadrightarrow \frac{\pi_1(S_g)}{\dprod{\pi_1(S_g)',\pi_1(S_g)^L}} =  H_1(S_g,\Z/L\Z). \]
  Such that the projection $\pi_1(S_g) \to H_1(S_g,\Z/L\Z)$ factors through $\psi$. Similarly
  \[ G \twoheadrightarrow \pi_1(S_g)/(\pi_1(S_{2g-1})),\] and so $f \in \Mbeta$.

  Finally, let $\tilde{\gamma} \in \pi_1(S_{2g-1})$ and $\gamma$ its image in $\pi_1(S_g)$. Let $f \in \Gamma_g[\psi]$
  and pick a representative $\phi$ fixing $x$. Then, let $\tilde{\phi}$ be the lift fixing $\tilde{x}$.
  By assumption there is a loop $\beta \in \pi_1(S_g,x)$, independent of $\gamma$, such that $\tilde{\phi}(\gamma).\widetilde{\beta \gamma^{-1} \beta^{-1}} \in \dprod{\pi_1(S_{2g-1})',\pi_1(S_{2g-1})^L}$.
  The two possible lifts for $\beta \gamma^{-1} \beta^{-1}$ starting at $\tilde{x}$ are given by $\tilde{\beta}\sigma^{i}\tilde{\gamma}^{-1}\tilde{\beta}^{-1}$,
  for $i=0,1$ depending if the lift $\tilde{\beta}$ of $\beta$ starting at $\tilde{x}$ is  a loop or not.
  Hence, either $\sigma \tilde{\phi}$ or $\tilde{\phi}$ acts trivially on $H_1(S_{2g-1},\Z/L\Z)$ and the result follows.

\item Let $f \in \Mod(R)[L]$, then $f$ has a representative $\phi$ fixing the complement of $R$ pointwise.
  In particular $f_*(a) = a$ and $f_*(b) = b$. Furthermore, as $R$ lifts to $S_{2g-1}$, $\phi$ has a lift
  $\tilde{\phi}$ acting trivially on $H_1(S_{2g-1},\Z/L\Z)$
  so that $\phi(\gamma).\gamma^{-1} \in \dprod{\pi_1(S_{2g-1})',\pi_1(S_{2g-1})^L}$. Thus, $f_*$ will
  fix $\psi$ over $\pi_1(S_{2g-1})$ and $a$, and so the result follows.

\end{enumerate}

\end{proof}

\begin{remark} Note that as $\sigma \notin \Mod(S_{2g-1},\sigma)[L]$ for $L \geq 3$, $\Gamma_g[\psi]$ has a lift $\Lambda_g[\psi] \subset \Msigma$. Denote by $\Prym[\psi]$ the restriction $\Prym[\psi]|_{\Lambda_g[\psi]}$. Then, \Cref{lemma:psi}.2 implies that,
\[ \Prym[\psi]:\Lambda_g[\psi] \twoheadrightarrow \Sp(2g - 2,\Z)[L].\]
Denoting by $\CR_g[\psi] = \Fix(\sigma)/\Lambda_g[\psi]$,
it follows that $\CR_g[\psi] \cong \hat{\CR}_g[\psi]$, so to avoid excessive notation we will denote $\hat{\CR}_g[\psi]$ by
$\CR_g[\psi]$. Similarly, we will consider $\Prym[\psi]$ as a map from $\Gamma[\psi]$. Furthermore, $\Gamma_g[\psi]$ is torsion free and so
$\CR_g[\psi]$ is a $K(\pi,1)$-manifold, and a non-Galois\footnote{There exists $T_d$ along a separating curve $d$ so that its
  lift to $S_{2g-1}$ acts nontrivially on $H_1(S_{2g-1},\Z/L\Z)$ and so $T_d$ not in $\Gamma_g[\Psi]$.} cover of $\M_g$ with fundamental group \emph{isomorphic} to $\Gamma_g[\psi]$.
\end{remark}

Let $\A_{g-1}[L]$ be the moduli space of PPAV with level $L$-structure. It follows that any nonconstant holomorphic map $F:\CR_g \to \A_{g-1}$ induces a holomorphic map $F[\psi]:\CR_g[\psi] \to \A_{g-1}[L]$, with
$F[\psi]_*:\Gamma_g[\psi] \to \Sp(2g - 2,\Z)[L]$ equal to $\Prym[\psi]_*$.
% \footnote{In case that $F_* = \chi \Prym_*$,
  %with nontrivial $\chi$, take a further double cover and note that $\chi$ is trivial over $\Mod(R)$, for $R$
  %as in the proof of \cref{lemma:psi} so surjectivity of $F[\psi]_*$ still holds.}
 Consequently, $F[\psi] \sim \Prym[\psi]$ and Steps 3-4 in Farb's proof~\cite{farb2021global} (see also \Cref{sec:farb_proof}) carry over without modification (In fact, one could have also lifted the period map to $\M_g[L]$ in order to prove
its rigidity for $g \geq 3$.). It follows that for a curve $C \subset \CR_g[\psi]$ there exists a homotopy
$F_t$ between $F[\psi]$ and $\Prym[\psi]$, which is algebraic at each $t$.

\subsubsection{Finishing the proof via Rigid curves}\label{subsubsec:rigid_curve}
To conclude the proof of \Cref{theo:prym_rigid} we just need to show that $\A_{g-1}[L]$-rigid curves exist
in our setting. More precisely, we will show that there exists a curve $i: C \hookrightarrow \CR_g[\psi]$ so that:
\[ \Prym[\psi] \circ i : C \to \A_{g-1}[L] \]
is rigid. As in Farb's case this is done by finding a family satisfying Saito's criterion\cite[Thm 8.6]{saito1993classification}.
The goal is to find a noncompact curve $C$ so that the family over $C$ has irreducible monodromy, and infinite
order monodromy around some of the boundary points. The argument follows closely \cite[Step 5]{farb2021global}.

Let $\overline{\M_g}$ denote the Deligne-Mumford compactification of $\M_g$. Let $\overline{\CR_g[\psi]}$
be the compactification of $\CR_g[\psi]$, given by the normalization of $\overline{\M_g}$
on the function field of $\CR_g[\psi]$, in particular $\overline{\CR_g[\psi]} \to \overline{M_g}$ is
a finite branched cover and $\overline{\CR_g[\psi]}$ is a projective variety. Thus, we can assume
that $\CR_g[\psi] \subset \overline{\CR_g[\psi]} \subset \Pro^N$ for some $N$. As $\dim(\CR_g[\psi]) = 3g-3$,
by Bertini's theorem, the intersection of $\CR_g[\psi]$ with $3g-4$ generic hyperplanes is a smooth curve
$C \subset \CR_g[\psi]$.

By the Lefschetz hyperplane theorem for quasi-projective varieties, the inclusion
$C \xhookrightarrow{} \CR_g[\psi]$ induces a surjection
$\pi_1(C) \twoheadrightarrow \pi_1(\CR_g[\psi]) = \Gamma_g[\psi]$.

Let $Z$ be the unique codimension $1$-stratum of $\partial \M_g$ containing curves with nodes coming
from pinching a unique nonseparating loop, and let $Z[\psi]$ be its preimage on $\partial \CR_g[\psi]$.

Let $R$ be as in item $2$ of \Cref{lemma:psi}, and let $\X \to \Delta$ be the universal family around the nodal curve $\X_0$,
where only a nonseparating SCC $\gamma \subset R$ is pinched. In particular, the singular curves are parametrized by $z_1 = 0$.
Let
\[ U = \{ (z,\xi) \in \Delta \times \C : z_1^L = \xi \}.\]
Then $\rho: U \to \Delta$ is an $L$-cyclic cover,
branched along $z_1 = \xi = 0$. Let $U^*$ be the complement of $z_1 = \xi = 0$. The local monodromy for $\rho: \pi_1(\Delta^*) \to \Mod(S_g)$ is generated by $T_\gamma$,
and $\dprod{T_\gamma^L} = \rho^{-1}(\Gamma_g[\psi])$. It follows that the pullback family $\rho^*\X \to U$ gives a neighborhood (in $\overline{\CR_g[\psi]}$) of a point
$y \in Z[\psi]$, and the local monodromy around $y$ is generated by $T_\gamma^L$. Let $Z_y$ be the top stratum of the irreducible component of $\partial \CR_g[\psi]$
containing $y$. Then $U \cap \{z_1 = \xi =0 \} \subset Z_y$, so $Z_y$ is of codimension $1$ with local monodromy conjugate in
$\Gamma_g[\psi]$ to $T_\gamma^L$ for $\gamma \subset R$. It follows that $\overline{C}$ will intersect
$Z_y$, in particular $C$ is not compact.

This is enough to conclude that $\Prym[\psi](C)$ is rigid: Let $\X[L] \to \A_{g-1}[L]$ be the universal family of PPAVs
with level $L$ structure. Then, let $E[L] \to C$ be the pullback of $\X[L]$ under $\Prym \circ i:C \to \A_{g-1}[L]$.
Forgetting the level $L$ structure, gives a family $\rho: E \to C$ of PPAVs over $C$. As $\A_{g-1}[L] \to \A_{g-1}$
is a finite branched cover, it is enough to show that $E$ is rigid.

Since $i_*:\pi_1(C) \to \pi_1(\CR_g[\psi])$ is surjective and $\Prym_*(\Gamma_g[\psi]) = \Sp(2g - 2,\Z)[L]$, it follows
that the monodromy representation $\rho_*:\pi_1(C) \to \Sp(2g - 2,\Z)$ is irreducible.

Finally, there is a point $y' \in \overline{C} \cap Z_y$ so that the local monodromy of $E$ around $y$ is conjugate to
$\Prym_*(T_\gamma^L)$ for some $T_\gamma \in \Mod(R)[L]$ along a nonseparating SCC $\gamma$. As $T_\gamma$ maps to a transvection under $\Prym_*$ the local monodromy has infinite
order and the claim follows by applying~\cite[Thm 8.6]{saito1993classification}.

%\subsubsection{Finishing the proof}
\begin{proof}[Proof of \Cref{theo:prym_rigid} via Rigid curves] Let $C$ be a rigid curve, then the homotopy $F_t:C \to \A_{g-1}[L]$
  between $F[\psi]$ and $\Prym[\psi]$, which is algebraic at each $t$ is \emph{constant} with respect to $t$, hence $\Prym[\psi]$ and $F[\psi]$ agree over $C$. As $\CR_g[\psi]$ is a quasiprojective variety and
$\h_{g-1}$ is a bounded domain, by the criterion of Borel-Narasimhan\cite[Thm 3.6]{Borel-Narasimhan}, it follows
that $F[\psi] = \Prym[\psi]$, hence also $\Prym = F$ and \Cref{theo:prym_rigid} is proven.
\end{proof}

\subsubsection{Finishing the Proof via VHS}\label{subsubsec:VHS}
Here we recount the approach suggested by R. Hain to Farb~\cite[Section 3]{farb2021global}. Let $C_L$ be a
complete curve in $\M_g[L]$ lying in the complement of the orbifold locus ($g \geq 3$), such that
the inclusion induces a surjection $\pi_1(C_L) \to \Mod(S_g)[L]$. Such a $C_L$ can be found
by slicing $\M_g[L]$ by hyperplanes on the satake compactification $\overline{\M}^S_g[L]$. Note that the forgetful map
$\phi:\CR_g[\psi] \to \M_g[L]$ is a branched cover, branched over the orbifold locus. Let $C_\psi$ be a
connected component of $\phi^{-1}(C_L)$, then $C_\psi$ is a complete curve, and the inclusion $i: C_\psi \to \CR_g[\psi]$
induces a surjection $i_*:\pi_1(C_\psi) \to \Gamma_g[\psi]$.
By the same arguments as in Step 4, $F[\psi] \circ i: C_\psi \to \A_{g-1}[L]$ is algebraic. Let $\VV \to \A_{g-1}[L]$
be the standard polarized variation of Hodge structures (PVHS) over $\A_{g-1}[L]$, so that over $A \in \A_{g-1}[L]$,
$\VV_A = H^1(A,\Z)$. Let $\VV_F$ and $\VV_{\Prym}$ be the pullbacks of $\VV$ over $C_\psi$, under the maps
$F[\psi] \circ i$ and $\Prym[\psi] \circ i$. Both monodromies are the same and are irreducible,
then it follows that (Theorem of the Fixed part and a Theorem of Schmid~\cite[Theorem 7.24]{schmid}),
\[ \VV_F \cong \VV_{\Prym}\]
as PVHS. Thus, there exist an $A \in \Sp(2g-2, \Z)$ inducing an isomorphism $A_L:\A_{g-1}[L] \to \A_{g-1}[L]$ such
that $A_L \circ F[\psi]$ and $\Prym[\Psi]$ agree over $C_\psi$. Composing with $A_L$ conjugates
\footnote{We can pick $A$ so that it makes the lifts of $A_L \circ F[\psi]$ and $\Prym[\psi]$ to the universal covers agree.}
the map
\[ (F[\psi] \circ i)_*:\pi_1(C_\psi) \to \Sp(2g-2,\Z)[L] \] by $A$. But $F[\psi]_* = \Prym[\psi]_*$, so A commutes with $\Sp(2g-2,\Z)[L]$
and thus $A = \{\pm \Id\}$. Consequentely $F[\psi] = \Prym[\psi]$ over $C_\psi$ and we are done by
citing Borel-Narasimhan~\cite{Borel-Narasimhan}.

%%% Local Variables:
%%% mode: latex
%%% TeX-master: "global_rigidity_prym"
%%% End:

\section{Appendix}
Let $S_g$ be a closed surface of genus $g \geq 1$, and $[\beta] \in H_1(S,\Z/2\Z)^*$. Then, there is a (unique up
to isomorphism) double cover \[ p:S_{2g-1} \to S_g, \]
with deck transform $\sigma$, and monodromy given by intersection with $[\beta]$.
% Define
% \[ \Mbeta := \Stab_{\Mod(S_g)}([\beta]), \mbox{ and } \Msigma := C_{\Mod(S_{2g-1})}(\sigma) \]
In this section we provide a short proof of the following, which is weaker than the result of
Sato~\cite[Theorem 0.2]{Sato}, but suffices for our applications to holomorphic rigidity.
\begin{theo}
  Let $g \geq 4$, then the abelianization of $\Mbeta$,
  which we denote $\Mbeta^{\Ab}$,  is nontrivial cyclic of order at most $4$ and is $\Z/2\Z$ for $g$ even.
\end{theo}
\begin{proof}
  Let $b$ be a nonseparating SCC representing $[\beta]$, and $a$ be a SCC intersecting $b$ once transversely. Let $R$ be the
  complement of an open annulus neighborhood of $b$. Then $R \cong S_{g-1}^2$ and there is an inclusion $j: \Mod(R) \to \Mbeta$,
  with kernel generated by $T_{b'}T_{b''}^{-1}$ corresponding to twists along the boundary components of $R$.
  As $\Mod(R)^{\Ab} = 0$, it follows that $[T_c] = 0 \in \Mbeta^{\Ab}$ for any SCC $c$ disjoint from $b$.
  In particular, by \Cref{lemma:gen_dehn_pi1(Rg)}, $\Mbeta^{\Ab} = \dprod{[T_a^2]}$.
  \begin{figure}[H]
    \begin{center}
        \def \svgwidth{.7\textwidth}
    %% Creator: Inkscape 1.1.1 (c3084ef, 2021-09-22), www.inkscape.org
%% PDF/EPS/PS + LaTeX output extension by Johan Engelen, 2010
%% Accompanies image file '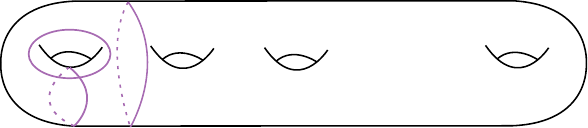' (pdf, eps, ps)
%%
%% To include the image in your LaTeX document, write
%%   \input{<filename>.pdf_tex}
%%  instead of
%%   \includegraphics{<filename>.pdf}
%% To scale the image, write
%%   \def\svgwidth{<desired width>}
%%   \input{<filename>.pdf_tex}
%%  instead of
%%   \includegraphics[width=<desired width>]{<filename>.pdf}
%%
%% Images with a different path to the parent latex file can
%% be accessed with the `import' package (which may need to be
%% installed) using
%%   \usepackage{import}
%% in the preamble, and then including the image with
%%   \import{<path to file>}{<filename>.pdf_tex}
%% Alternatively, one can specify
%%   \graphicspath{{<path to file>/}}
%% 
%% For more information, please see info/svg-inkscape on CTAN:
%%   http://tug.ctan.org/tex-archive/info/svg-inkscape
%%
\begingroup%
  \makeatletter%
  \providecommand\color[2][]{%
    \errmessage{(Inkscape) Color is used for the text in Inkscape, but the package 'color.sty' is not loaded}%
    \renewcommand\color[2][]{}%
  }%
  \providecommand\transparent[1]{%
    \errmessage{(Inkscape) Transparency is used (non-zero) for the text in Inkscape, but the package 'transparent.sty' is not loaded}%
    \renewcommand\transparent[1]{}%
  }%
  \providecommand\rotatebox[2]{#2}%
  \newcommand*\fsize{\dimexpr\f@size pt\relax}%
  \newcommand*\lineheight[1]{\fontsize{\fsize}{#1\fsize}\selectfont}%
  \ifx\svgwidth\undefined%
    \setlength{\unitlength}{281.84464582bp}%
    \ifx\svgscale\undefined%
      \relax%
    \else%
      \setlength{\unitlength}{\unitlength * \real{\svgscale}}%
    \fi%
  \else%
    \setlength{\unitlength}{\svgwidth}%
  \fi%
  \global\let\svgwidth\undefined%
  \global\let\svgscale\undefined%
  \makeatother%
  \begin{picture}(1,0.21702929)%
    \lineheight{1}%
    \setlength\tabcolsep{0pt}%
    \put(0,0){\includegraphics[width=\unitlength,page=1]{figure_appendix.pdf}}%
    \put(0.04239652,0.05485623){\makebox(0,0)[lt]{\lineheight{1.25}\smash{\begin{tabular}[t]{l}$b$\end{tabular}}}}%
    \put(0.09846853,0.18505716){\makebox(0,0)[lt]{\lineheight{1.25}\smash{\begin{tabular}[t]{l}$a$\end{tabular}}}}%
    \put(0.26003169,0.16414902){\makebox(0,0)[lt]{\lineheight{1.25}\smash{\begin{tabular}[t]{l}$d$\end{tabular}}}}%
  \end{picture}%
\endgroup%

  \end{center}
  \caption{2-chain relation.}\label{fig:2-chain}
\end{figure}
  By the $2$-chain relation $(T_a^2T_b)^4 = T_d$. As $d$ is disjoint from $b$, $[T_a^2]^4 =0$.
  Similarly, using the $k$-relation depicted in \Cref{fig:chain_conj}
  \[ (T_a^2T_{c_1}T_{c_2})^3 = (T_{a'}^2T_{c_3}\ldots T_{c_{2g-2}})^{2g-3} .\]
  Thus, for $g$ even $[T_a^2]^2 = 0$.

  The nontriviality of $\Mbeta^{\Ab}$ follows from.
  \begin{lemma}
     Let $p \in \N$ and $\Lambda_g[p] = \{ A \in \Sp(2g;\Z); Ae_1 = e_1 + p a_1 \mbox{ , } a_1 \in \Sp(2g,\Z)\}$ and denote by $\land$ the symplectic
  pairing. The map $\varphi: \Lambda_g[p] \to
  \Z_p$ defined by
  \[ A \mapsto \frac{1}{p} (Ae_1 \land e_1) \mod p \]
  is a surjective homomorphism. In particular $H_1(\Lambda_g[p];\Z)$ is of order at least $p$.
\end{lemma}

\begin{proof}
  Let $A,B \in \Lambda_g[p]$, then:
  \[ (AB e_1) = A(p b_1 + e_1) = pAb_1 + pa_1 + e_1 \]
  Now as $A$ preserves the symplectic pairing $\land$, we have:
  \[ Ab_1 \land Ae_1 = Ab_1 \land (pa_1 + e_1) = b_1 \land e_1 \]
  And so we get $Ab_1 \land e_1 = b_1 \land e_1 -p Ab_1 \land a_1$.
  Hence:
  \[ (AB)e_1 \land e_1 = p ( a_1 \land e_1 + b_1 \land e_1 - p Ab_1 \land a_1) \]
  and so $\varphi$ is a group homomorphism. To see that it is surjective just take the $p$-th powers of the transvection
  given by $\begin{pmatrix} 1 & 0 \\ \pm1 & 1 \end{pmatrix}$.

\end{proof}
\end{proof}
The results of \Cref{subsec:rel-msigma} imply the following.
\begin{cor} Let $g\geq 4$, then $\Msigma^{\Ab}$ is cyclic of order at most $4$. Furthermore, it is generated
  by the class of $T_{\tilde{a}}$, where $a$ is a nonseparating SCC with $\hat{i}_2([a],[\beta]) =1$.
  For $g$ even, $[\sigma] = [T_{\tilde{a}}]^2$ and $[\sigma] = 0$ for $g$ odd.
\end{cor}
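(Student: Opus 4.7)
The plan is to abelianize the Birman--Hilden short exact sequence
\[ 1 \to \dprod{\sigma} \to \Msigma \to \Mbeta \to 1 \]
and use the generating set and relations from \Cref{subsec:rel-msigma}, together with \Cref{theo:rigidity_group}\,(1), to pin down $\Msigma^{\Ab}$ as a cyclic group of order at most $4$.

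First I would produce a short generating set for $\Msigma^{\Ab}$. By \Cref{cor:gen_twist_sigma}, $\Msigma$ is generated by $\sigma$, by $T_{\tilde{c}}$ for $\hat{i}_{2}([c],[\beta])=1$, and by $T_{\tilde{c}}T_{\sigma(\tilde{c})}$ for $\hat{i}_{2}([c],[\beta])=0$. Pick a nonseparating SCC $b$ representing $[\beta]$ and let $R$ be the complement of an open annular neighborhood of $b$, so $R\cong S_{g-1}^{2}$; since $g\ge 4$ we have $\Mod(R)^{\Ab}=0$, and $\Mod(R)$ admits a distinguished lift $\widetilde{\Mod(R)}\subset\Msigma$ because every element of $\Mod(R)$ fixes a point above $b$. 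Consequently, every multitwist $T_{\tilde{c}}T_{\sigma(\tilde{c})}$ (after isotoping $c$ into $R$, which is possible in every case---including $[c]=[\beta]$, by isotoping $c$ to a pushoff of $b$ inside $R$) has trivial class in $\Msigma^{\Ab}$, and \Cref{cor:conj_Msigma} collapses all classes $[T_{\tilde{c}}]$ with $\hat{i}_{2}([c],[\beta])=1$ to the single class $[T_{\tilde{a}}]$. Hence $\Msigma^{\Ab}$ is generated by $[\sigma]$ and $[T_{\tilde{a}}]$.

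Next I would abelianize the lifted $k$-chain relation recorded in the remark of \Cref{subsec:rel-msigma},
\[ (T_{\tilde{a}}T_{\tilde{c}_{1}}T_{\sigma(\tilde{c}_{1})} T_{\tilde{c}_{2}}T_{\sigma(\tilde{c}_{2})})^{3}= \sigma\,(T_{\tilde{a}'}T_{\tilde{c}_{3}}T_{\sigma(\tilde{c}_{3})} \cdots T_{\tilde{c}_{2g-2}}T_{\sigma(\tilde{c}_{2g-2})})^{2g-3}. \]
All $c_{i}$ lie in $R$, so the multitwist classes vanish in $\Msigma^{\Ab}$, and \Cref{cor:conj_Msigma} gives $[T_{\tilde{a}'}]=[T_{\tilde{a}}]$. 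The relation collapses to $3[T_{\tilde{a}}]=[\sigma]+(2g-3)[T_{\tilde{a}}]$, i.e.\ $[\sigma]=(6-2g)[T_{\tilde{a}}]$, showing that $\Msigma^{\Ab}$ is cyclic with generator $[T_{\tilde{a}}]$. To bound the order, apply \Cref{theo:rigidity_group}\,(1) in the $1$-dimensional case $m=1<2(g-1)$: every homomorphism $\chi\colon\Msigma\to\C^{*}$ has image cyclic of order at most $4$. Since $\Msigma^{\Ab}$ is cyclic and every character factors through it, its order (the supremum of orders of characters on it) is at most $4$. Finally, reducing $6-2g\pmod{4}$ yields $[\sigma]=2[T_{\tilde{a}}]=[T_{\tilde{a}}]^{2}$ for $g$ even and $[\sigma]=0$ for $g$ odd.

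The main technical point is the bookkeeping claim that each multitwist $T_{\tilde{c}}T_{\sigma(\tilde{c})}$ arising either as a generator or inside the $k$-chain relation really does lie in the distinguished lift $\widetilde{\Mod(R)}$, so that $\Mod(R)^{\Ab}=0$ can be used to annihilate its class in $\Msigma^{\Ab}$. Once that is set up, the single identity $[\sigma]=(6-2g)[T_{\tilde{a}}]$ combined with the $1$-dimensional case of \Cref{theo:rigidity_group} reduces the corollary to the parity calculation above.
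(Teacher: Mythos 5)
Your overall route is the one the paper intends (the paper leaves the corollary as an immediate consequence of \Cref{subsec:rel-msigma} together with the appendix computation for $\Mbeta^{\Ab}$): kill the multitwist generators of \Cref{cor:gen_twist_sigma} in $\Msigma^{\Ab}$ using the perfect lifted copy of $\Mod(R)$, collapse the twists $T_{\tilde a}$ to a single class by \Cref{cor:conj_Msigma}, and abelianize the lifted $k$-chain relation to get $[\sigma]=(6-2g)[T_{\tilde a}]$, whence the parity statement. One genuine (though easily repaired) misstep: your parenthetical claim that every nonseparating $c$ with $\hat i_2([c],[\beta])=0$ "can be isotoped into $R$ --- including $[c]=[\beta]$, by isotoping $c$ to a pushoff of $b$" is false. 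A curve homologous to $b$ mod $2$ need not be isotopic to $b$, and more generally a curve with $\hat i_2([c],[\beta])=0$ can have nonzero geometric intersection with $b$, so it cannot be isotoped into $R$. The conclusion you want survives because in the abelianization only the $\Msigma$-conjugacy class of $T_{\tilde c}T_{\sigma(\tilde c)}$ matters: by \Cref{cor:conj_Msigma}(2) every such multitwist is conjugate either to one supported in $R$ (when $[c]\neq[\beta]$) or to $T_{\tilde b}T_{\sigma(\tilde b)}$, which is the diagonal lift of the boundary twist $T_{b}\in\Mod(R)$, hence also lies in $\widetilde{\Mod(R)}$. So replace the isotopy claim by this conjugation argument, which you already use for the odd-intersection twists.

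On the order bound you diverge slightly from the paper: you import "order at most $4$" from the $m=1$ case of \Cref{theo:rigidity_group}(1), whereas the paper's appendix works purely with relations (the theorem on $\Mbeta^{\Ab}$ via the $2$-chain relation $(T_a^2T_b)^4=T_d$, then lifts). Your shortcut is legitimate and not circular --- the proof of \Cref{theo:rigidity_group}(1) rests on Franks--Handel/Korkmaz, the generating sets, and the lifted chain relations, not on the abelianization computation --- and it is in fact convenient: arguing only from the extension $1\to\dprod{\sigma}\to\Msigma\to\Mbeta\to1$ and the appendix theorem gives a priori only order at most $8$ for $g$ odd, and pinning it to $4$ by hand would require controlling the $\sigma$-ambiguity in the lifted $2$-chain relation. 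Since $\Msigma^{\Ab}$ is finitely generated and cyclic, a character of maximal order exists, so the character bound does give $|\Msigma^{\Ab}|\le 4$, and the reduction of $6-2g$ mod $4$ then yields $[\sigma]=[T_{\tilde a}]^2$ for $g$ even and $[\sigma]=0$ for $g$ odd, as claimed.
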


\begin{remark}Sato~\cite[Theorem 0.2]{Sato} has shown that $\Mbeta^{\Ab} = \Z/4\Z$ for $g$ odd, and $\Msigma^{\Ab} = \Z/4\Z$.
\end{remark}

%%% Local Variables:
%%% mode: latex
%%% TeX-master: "global_rigidity_prym"
%%% End:

\printbibliography
\end{document}

%%% Local Variables:
%%% mode: latex
%%% TeX-master: t
%%% End: